\documentclass[11pt]{amsart}
\usepackage{amsmath}
\usepackage{amssymb,amscd}
\usepackage{epsfig}
\usepackage[curve]{xypic}
\usepackage{graphicx}
\usepackage{enumitem}
\usepackage{tikz-cd}

\usepackage{parskip}
\usepackage[colorlinks=true, urlcolor=blue,bookmarks=true,bookmarksopen=true, citecolor=blue]{hyperref}
\numberwithin{equation}{section}

\newtheorem{theorem}{Theorem}[section]

\newtheorem{claim}[theorem]{Claim}

\newtheorem{corollary}[theorem]{Corollary}

\newtheorem{lemma}[theorem]{Lemma}
\newtheorem{prop}[theorem]{Proposition}
\newtheorem{question}[theorem]{Question}

\theoremstyle{definition}
\newtheorem{defn}[theorem]{Definition}

\newtheorem{conj}[theorem]{Conjecture}

\newtheorem{example}[theorem]{Example}
\newtheorem{ques}[theorem]{Question}
\newtheorem{remark}[theorem]{Remark}
\newcommand{\st}{\mbox{\upshape st}\,}
\newcommand{\lk}{\mbox{\upshape lk}\,}
\newcommand{\Sp}{\mathbb{S}}
\newcommand{\RP}{\mathbb{RP}}
\newcommand{\D}{\Delta}

\def \begineq{\begin{equation}}
\def \endeq{\end{equation}}

\def \bb{\mathbb}

\def \RR{{\bb{R}}}

\def \ZZ{{\bb{Z}}}

\def \({\left(}
\def \){\right)}
\def \<{\langle}
\def \>{\rangle}
\def \bar{\overline}

\begin{document}

\title[On the equivariant triangulation of some small covers]{On the equivariant triangulation of some small covers}

\author[R. K. Gupta]{Raju Kumar Gupta}

\address{Department of Mathematics, Indian Institute of Technology Madras, Chennai-600036, India.}
\email{rajukrg3217@gmail.com}

\author[S. Sarkar]{Soumen Sarkar}

\address{Department of Mathematics, Indian Institute of Technology Madras, Chennai-600036, India.}

\email{soumen@iitm.ac.in}

\subjclass[2020]{57Q15, 52B12, 05E45}

\keywords{Equivariant triangulation, group action, small cover}

\abstract In this paper, we study certain properties of $\mathbb{Z}_2^n$-equivariant triangulations of small covers.  We show that any $\mathbb{Z}_2^n$-equivariant triangulation of a small cover  naturally induces a triangulation of the orbit space. Then, we explicitly construct the minimal $\mathbb{Z}_2^3$-equivariant triangulation of $\mathbb{RP}^3$, which contains $11$ vertices and prove that this is the unique $\mathbb{Z}_2^3$-equivariant triangulation of $\mathbb{RP}^3$ with $11$ vertices. For a finite group $G$, we give a method for constructing some $G$-equivariant triangulations of connected sums of manifolds from their respective $G$-equivariant triangulations. In particular, we construct a $\mathbb{Z}_2^3$-equivariant triangulation of $\mathbb{RP}^3 \# \mathbb{RP}^3$ with $17$ vertices, which is the best known yet. This triangulation of $\mathbb{RP}^3 \# \mathbb{RP}^3$ provides another minimal $g$-vector improving one of the result of Lutz in \cite{LS}. Moreover, we prove that a $\ZZ_2^4$-equivariant triangulation of $\mathbb{RP}^4$ requires at least $18$ vertices. 

\endabstract

\maketitle

\section{Introduction}

The rich interplay between topology and combinatorics has led to many valuable discoveries, deepening our understanding of both fields. Triangulations of manifolds highlight such a connection by breaking complex spaces into simpler building blocks called simplices. This method supports both theoretical insights and computational applications. For decades, researchers have been interested in creating new triangulations and investigating the combinatorial features of topological spaces, see \cite{bjorner2000simplicial},\cite{kuhnel1983unique}.  The question of the size of a triangulation has gained practical importance. A triangulation $X$ of a topological space $M$ is said to be {\em minimal} if, for any other triangulation $Y$ of $M$, we have $f_0(X) \leq f_0(Y)$, where $f_0$ denotes the number of vertices.
It is worthwhile to investigate the number of vertices and the total number of faces in a triangulation, as well as to explicitly construct minimal triangulations. Constructing new triangulations (possibly with minimal vertices) for projective spaces has been one of the known research topics in this area. 

The minimal triangulation of $\mathbb{RP}^2$ is given by $6$ vertices, which is unique, \cite{KB}. A minimal triangulation of $\mathbb{RP}^3$ was constructed by Walkup with $11$ vertices, cf. \cite{Walkup}. In 1999, Lutz \cite{Lutz16vertex} constructed a $16$-vertex triangulation of $\mathbb{RP}^4$ and showed that it is a minimal triangulation of $\mathbb{RP}^4$. In \cite{balagopalan}, Balagopalan constructed 3 triangulation of $\mathbb{RP}^4$ in different ways, which are eventually the same as the Lutz triangulation. 

\begin{conj}\cite{Lutz16vertex}\label{RP4conjecture}
   The minimal triangulation of $\RP^4$ is unique with 16-vertices.
\end{conj}

Apart from exploring equivariant triangulations of small covers, one of the main motivations of this article is to study $\mathbb{Z}_2^4$-equivariant triangulations of $\mathbb{RP}^4$ with $16$ vertices. However, we find that no $\mathbb{Z}_2^4$-equivariant triangulation of $\mathbb{RP}^4$ exists with at most $17$ vertices (see Theorem \ref{thm7}). 

Arnoux and Marin (\cite{AM}) gave a lower bound $\frac{(n+1)(n+2)}{2}$ on the number of vertices for the triangulation of $n$-dimensional real projective space $\mathbb{RP}^n$. But this bound cannot be realized for $n \geq 4$. A series of combinatorial triangulation of $\mathbb{RP}^n$ with $2^{n+1} - 1$ vertices was constructed by K\"{u}hnel, cf. \cite{Ku}. There is a triangulation of $\mathbb{RP}^n$ with $2^n+n+1$ vertices for all $n \geq 3$, cf \cite[Theorem 3.24]{BS} and \cite[Proposition 4.3]{Kusp2023}. Recently, a new development came in  \cite{venturello2021new}, where the authors constructed a family of triangulations of the $d$-dimensional real projective space $\mathbb{RP}^d$ with at most $\Theta\bigl(\bigl(\frac{1+\sqrt{5}}{2}\bigr)^{d+1}\bigr)$ vertices for every $d \geq 1$. In \cite{Adi}, the authors constructed a triangulation of $\mathbb{RP}^n$ with at most $e^{\bigl(\frac12 + o(1)\bigr) \sqrt{n} \log n}$ vertices.
The construction of the minimal triangulations of $\mathbb{RP}^n$ for $n\geq 5$ is still unknown.

Small covers were introduced in the pioneering paper of Davis-Januszkiewicz \cite{DJ} as a generalization of real projective spaces. Briefly, an $n$-dimensional small cover is an $n$-dimensional smooth manifold with a locally standard action of $\ZZ_2^n$ where the orbit space is a simple polytope.
The results of Illman in \cite{Il} show that there is a $\ZZ_2^n$-equivariant triangulation of a small cover. An explicit proof of this is given in \cite[Lemma 3.6]{BS}. 

Inspired by the work of Davis-Januszkiewicz \cite{DJ} and Illman \cite{Il}, we ask the following.
\begin{ques}\label{qu1}
What is the  minimal $\ZZ_2^n$-equivariant triangulation of an $n$-dimensional small cover up to equivariant isomorphism?
\end{ques}
\begin{ques}\label{qu}
 Does each $\ZZ_2^n$-equivariant triangulation of an $n$-dimensional small cover $N$ with the orbit space $Q$ induce a triangulation of $Q$?
\end{ques}
In \cite{BS}, authors answered these questions when the dimension of small covers is $2$. Recently, a lower bound on the number of vertices of equivariant triangulations using equivariant covering type was given in \cite{govc2024equivariant}. In this article, we answer Question~\ref{qu} in every dimension (see Theorem~\ref{thm2}). We explicitly construct a minimal $\ZZ_2^3$-equivariant triangulation of $\RP^3$ with $11$ vertices and prove that it is unique. We also construct a $\ZZ_2^3$-equivariant triangulation of $\RP^3 \# \RP^3$ with $17$ vertices, which is the best known so far.

The article is organized as follows. In Section \ref{pritri}, we recall the definitions and some basic results on triangulations of manifolds and small covers. 

In Section \ref{sphere}, we present some results on the equivariant triangulations of $n$-spheres and characterize the equivariant triangulations of $n$-spheres with $2n+2$ vertices, proving that such a triangulation must be that of an octahedral sphere. 

In Section \ref{equitri}, we begin by proving that any $\ZZ_2^n$-equivariant triangulation of a small cover naturally induces a triangulation on its orbit space. Then, we observe some phenomena related to $\RP^n$. These results provide some  conditions for a $\ZZ_2^n$-equivariant triangulation of $\RP^n$. Following this, we construct a $\ZZ_2^3$-equivariant triangulation of $\RP^3$ with $11$ vertices from a $3$-simplex and prove that it is the unique triangulation on $11$ vertices. 

In Section \ref{sec:connectedsum}, for a finite group $G$, we present a construction method for $G$-equivariant triangulation of the connected sum of two manifolds $M$ and $N$ from their $G$-equivariant triangulations. Using this method, we provide an example of a $\ZZ_2^3$-equivariant triangulation of $\RP^3 \# \RP^3$. 

Finally, in Section \ref{sec:RP4}, we study $\ZZ_2^4$-equivariant triangulations of $\RP^4$ with at most $17$ vertices and prove that no such triangulation exists.


\section{Preliminaries}\label{pritri}
We recall some basic definitions for triangulation of manifolds following \cite{RS}.
A compact convex polyhedron which spans a subspace of dimension $n$ is called an {\em $n$-cell}. The convex hull of the standard basis of $\RR^{n+1}$ is called the {\it standard $n$-simplex}, denoted by $\Delta^n$. In this paper, by an $n$-simplex, we mean an $n$-dimensional subset of $\RR^m$ for some $m\in\mathbb{N}$, which is affinely diffeomorphic as a manifold with corners to the standard $n$-simplex.
\begin{defn}
A {\em cell complex} $X$ is a finite collection of cells in $\RR^n$ for some $n\in \mathbb{N}$ satisfying,
$(i)$ if $\sigma$ is a face of $\tau$ and $\tau \in X$, then $\sigma \in X$, $(ii)$ if $\sigma, \tau \in X$
then $\sigma \cap \tau$ is a face of both $\sigma$ and $\tau$. A cell complex $X$ is {\em simplicial} if each $\sigma \in X$ is a simplex.
\end{defn}
We denote a simplex $\sigma$ with vertices $ \{u_1, \ldots, u_k\}$ by $u_1u_2\cdots u_k$. Throughout the manuscript, we denote a standard $n$-simplex generated by the vectors $\{e_0,e_1,\ldots, e_n\}$ by $\Delta^n$, where $e_0$ is the origin and $\{e_1,\ldots,e_n\}$ is the standard basis of $\RR^n$.
The vertex set of a simplicial complex $X$ is denoted by $V(X)$. An {\em $f$-vector} of a $d$-dimensional simplicial complex $X$ is a $(d+2)$-tuple $(f_{-1}, f_0, f_1, \ldots, f_d)$ such that $f_{-1} = 1$, and for all $0 \leq i \leq d$, $f_i$ denotes the number of $i$-dimensional simplices in $X$.  For a $d$-dimensional finite simplicial complex $K$, $g_2(K)$ is defined by  $g_2:=f_1-(d+1)f_0 + \binom{d+2}{2}$. $g_2$ is one of the well known combinatorial invariant and its emergence goes back to  Walkup \cite{Walkup} where he proved that for any closed and connected triangulated $3$-manifold $K$, $g_2(K)\geq 0$. Barnette \cite{Barnette1, Barnette2, Barnette3} proved that if $K$ is the boundary complex of a simplicial $(d+1)$-polytope or, more generally, a triangulation of a connected $d$-manifold, then $g_2(K)\geq 0$. For more results on $g_2$ of a manifold and normal pseudomanifold, we refer the reader to \cite{BSR1}.

Let $X$ be a simplicial complex and $\sigma \in X$. The {\em link} of a face $\sigma$ in $X$ is defined as 
 $\{\tau \in X : \sigma \star \tau \in X ~\mbox{and}~ \sigma \cap \tau = \emptyset\}$ and is denoted by $\lk(\sigma,X)$. The {\em star} of the face $\sigma$ is defined as $\{\sigma \star \lk(\sigma,X)\}$, and is denoted by $\st(\sigma, X)$.  If $X$ is a simplicial complex, then $|X|:= \bigcup_{\sigma \in X} \sigma$ is a compact polyhedron and is referred to as the geometric carrier of $X$. For every face $\sigma$ in $X$, by $d(\sigma,X)$ (or, $d(\sigma)$ if $X$ is specified) we mean the number of vertices in $\lk (\sigma)$. For two vertices $x$ and $y$, $(x,y]$ denotes the semi-open and semi-closed edge $xy$, where $y\in (x,y]$ but $x \not \in (x,y]$. By $(x,y)$, we denote the open edge $xy$, where $x, y\not \in (x,y)$. 
 For a set $S$, we define the cardinality of $S$ as the number of elements in $S$ and is denoted by $\mathrm{Card}(S)$. The dimension of a face $\sigma\in X$ is Card($V(\sigma))-1$, and the dimension of $X$ is defined as the maximum dimension among the simplices in $X$. A triangulation  $X$ of a topological space $N$ is called minimal if $f_0(X) \leq f_0(Y)$ for all triangulations $Y$ of the topological space $N$.

\begin{defn}
{\rm If $X$, $Y$ are two simplicial complexes, then a {\em simplicial isomorphism} from $X$ to $Y$ is a bijection $\eta : X \to Y$ such that $\eta(\sigma)$ is a $k$-face of $Y$ if and only if $ \sigma$ is a $k$-face of $X$.} 
\end{defn}

We recall the definition of equivariant triangulation. This is a slight modification of the definition given in Section 2 of \cite{Il}.
\begin{defn}
{\rm Let $G$ be a finite group. A $G$-$equivariant~ triangulation$ of the $G$-space $N$ is a triangulation $X$ of $N$ such
that if $$\sigma =u_1\cdots u_k \in X, ~\mbox{then} ~ \mu(\sigma)=\mu(u_1)\cdots \mu(u_k) \in X$$ for all $ \mu\in G$, where $\mu$ is regarded as the
homeomorphism corresponding to the action of $\mu$ on $N\cong|X|$.}
\end{defn}

\begin{corollary}\label{subgroup}
If $X$ is a $G$-equivariant triangulation of an $n$-manifold $M$. Then $G$ is a subgroup of $Aut(X)$.
\end{corollary}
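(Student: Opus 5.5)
The plan is to build a homomorphism $\Phi : G \to \mathrm{Aut}(X)$ and show it is injective. Then $G$ is isomorphic to its image $\Phi(G)$, a subgroup of $\mathrm{Aut}(X)$, and we identify $G$ with that image. Here $\mathrm{Aut}(X)$ is the group of simplicial isomorphisms $X \to X$ under composition.

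\emph{Construction of $\Phi$.} Fix $\mu \in G$ and regard it as the homeomorphism of $N \cong |X|$ given by the action. By the definition of a $G$-equivariant triangulation, $\mu$ sends each simplex $u_1\cdots u_k$ of $X$ to the simplex $\mu(u_1)\cdots\mu(u_k)$ of $X$. Taking $k=1$ shows that $\mu$ maps $V(X)$ into $V(X)$. The inverse $\mu^{-1}$ also lies in $G$ and does the same, so $\mu|_{V(X)}$ is a bijection. It preserves simplices in both directions, using $\mu$ and $\mu^{-1}$, so the induced map
\[
\Phi(\mu) : \sigma \mapsto \mu(\sigma)
\]
is a simplicial isomorphism of $X$ preserving dimension. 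The map $\Phi(\mu)$ is determined by $\mu|_{V(X)}$. Since $(\mu\nu)|_{V(X)} = \mu|_{V(X)} \circ \nu|_{V(X)}$, we get $\Phi(\mu\nu) = \Phi(\mu)\Phi(\nu)$, so $\Phi$ is a group homomorphism.

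\emph{Injectivity.} This is the step needing care, and I would handle it as follows. Suppose $\Phi(\mu)$ is the identity, so $\mu$ fixes every vertex of $X$. The definition only says $\mu$ maps simplices onto simplices, and does not literally say $\mu$ is affine on each simplex. I would therefore argue using the convention, implicit in writing $\mu(\sigma)=\mu(u_1)\cdots\mu(u_k)$ and in Illman's setting, that the homeomorphism acts simplicially, i.e.\ linearly on each simplex. A homeomorphism that is linear on every simplex and fixes all vertices is the identity on $|X| \cong N$.

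Since the action of $G$ on $N$ is by assumption a group action, and (as is implicit, e.g.\ for small covers) an effective one, $\mu$ must be the identity element. Hence $\ker\Phi$ is trivial, and $G \cong \Phi(G) \le \mathrm{Aut}(X)$.

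If effectiveness is not assumed, the same argument shows that $G/\ker\Phi$ embeds in $\mathrm{Aut}(X)$, where $\ker\Phi$ is the subgroup acting trivially. I would note this caveat in the proof.
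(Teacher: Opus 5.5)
Your proof is correct and is exactly the argument the paper leaves implicit: the paper gives no proof, treating the corollary as immediate from the definition (each $\mu\in G$ permutes $V(X)$ and carries simplices to simplices, so $\mu\mapsto(\sigma\mapsto\mu(\sigma))$ is a homomorphism $G\to\mathrm{Aut}(X)$). The two caveats you flag are genuine and are tacitly assumed there. The statement fails for non-effective actions (for example, a trivial action of a nontrivial group). Injectivity needs either Illman's convention that $G$ acts linearly on simplices or, under the paper's literal set-wise definition, an extra input such as Newman's theorem on periodic homeomorphisms: a finite-order $\mu$ fixing all vertices maps every simplex onto itself and fixes each edge pointwise, and then, by doubling each simplex along its boundary and inducting on skeleta, $\mu$ is the identity.
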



In the rest of this section, we recall some basics on small covers following \cite{DJ}. Let $N$ be an $n$-dimensional smooth manifold and $\rho \colon \ZZ_2^n \times \RR^n \to \RR^n$ be the standard action. An $n$-dimensional simple polytope is an $n$-dimensional convex polytope such that each of its vertex is the intersection of $n$-many codimension one faces. 
\begin{defn} (\cite{DJ})
Let $\eta \colon \ZZ_2^n \times N \to N$ be a group action. Then, $\eta$ is said to be locally standard if the following holds.
\begin{enumerate}
 \item Each $y \in N $ has a $\ZZ_2^n$-invariant open neighborhood $U_y$, that is $\eta (\ZZ_2^n \times U_y) = U_y$.
\item  There exists a diffeomorphism $\psi \colon U_y \to V$, where $V$ is a $\ZZ_2^n$-invariant (that is $\rho (\ZZ_2^n \times V) = V$) open subset of $\RR^n$.
\item There exists an isomorphism $\delta_y \colon \ZZ_2^n \to \ZZ_2^n$ such that $\psi( \eta (t, x)) = \rho(\delta_y (t), \psi(x))$ for all $(t,x) \in \ZZ_2^n \times U_y$.
\end{enumerate}
\end{defn}

\begin{defn}\cite{DJ}
A closed $n$-dimensional manifold $N$ is said to be a small cover if
there is an effective $\ZZ^n_2$-action on $N$ such that the action is a locally standard action and the orbit space of the action has the structure of a simple polytope. 
\end{defn}

\begin{example}
Let $Q_n $ be the convex-hull of the points $\{\pm e_i ~|~ i=1, \ldots, n\}$, where $e_1, \ldots, e_n$ are the standard basis of $\RR^n$. Define an equivalence relation $\sim$ on $Q_n$ by $x \sim -x$ whenever $x \in \partial Q_n$. The real projective space $\mathbb{RP}^n$ is the quotient space $Q_n/\sim $. We denote the points of $\mathbb{RP}^n$, by the equivalence classes $[x_1: \cdots : x_{n}]$ where $(x_1, \ldots, x_{n}) \in Q_n$. The standard action $\rho$ on $\RR^n$ induces an action $\rho_p \colon \ZZ_2^n \times \mathbb{RP}^n \to \mathbb{RP}^n$. Observe that this action is locally standard, and the orbit space of this action is an $n$-simplex $\Delta^n$. So, $\mathbb{RP}^n$ is a small cover. We note that 
the equivariant connected sum of two $\mathbb{RP}^n$s at fixed points is again a small cover.
\end{example}

\begin{remark}
{\rm By Proposition 1.8 of \cite{DJ}, any small cover over $\Delta^n$ is weakly equivariantly homeomorphic to $\mathbb{RP}^n$  with standard $\mathbb{Z}_2^n$-action.} 
\end{remark}

\begin{defn}\label{Charecteristic}
Let $\{\mu_1, \ldots, \mu_n\}$ be the standard generators of $\ZZ_2^n$.
Let $$X_i := \{[x_1: \cdots : x_n] \in \mathbb{RP}^n : x_i=0 ~\mbox{in}~ Q_n\}~ \mbox{for}~ i= 1, \ldots, n,$$
and $$X_0 := \{[x_1 : \cdots : x_n] \in \mathbb{RP}^n : (x_1, \ldots, x_n) \in \partial Q_n\}.$$ 
Then $X_0, X_1, \ldots, X_{n-1}$ and $ X_n$ are codimension one submanifold of $\mathbb{RP}^n$ fixed by the subgroups
$(\mu_1+ \cdots + \mu_n), (\mu_1), \ldots, (\mu_{n-1})$ and $(\mu_n)$, respectively. 

Let $\pi \colon \mathbb{RP}^n \to \Delta^n$ be the orbit map of the standard $\ZZ_2^n$-action on $\mathbb{RP}^n$. 
Let $F_i$ be the $(n-1)$-dimensional face in $\Delta^n$ not containing the vertex $e_i$ for $i=0, \ldots, n$. Then, $F_i := \pi(X_i)~\mbox{for}~ i= 0, \ldots, n$.
Thus, there is an assignment 
$$F_0 \to \mu_1+ \cdots + \mu_n:=\xi_0 ~\mbox{and} ~ F_i \to \mu_i :=\xi_i ~ \mbox{for}~ i =1, \ldots, n.$$
This assignment is called a $\ZZ_2$-{\em characteristic function} on $\Delta^n$ and $\xi_i$
is called a $\ZZ_2$-characteristic vector corresponding to the face $F_i$. 

\end{defn}

\begin{prop}\cite[Lemma 3.6]{BS}\label{equivariant}
There is a $\mathbb{Z}_2^n$-equivariant triangulation on an $n$-dimensional small cover over a simple polytope $Q$ with $2^n + f_{n-1} 2^{n-1} + \cdots + 2f_1+ f_0$ vertices, where $f_j$ is the number of $j$-dimensional faces of $Q$. 
\end{prop}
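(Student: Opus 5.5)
The plan is to use the standard Davis--Januszkiewicz construction $N \cong (\ZZ_2^n \times Q)/\sim$, where $(t,x)\sim(s,y)$ if $x=y$ and $t-s$ lies in the subgroup $G_F$ generated by the characteristic vectors of the facets containing the face $F$ whose relative interior contains $x$. First triangulate $Q$ in a way adapted to its face structure, then copy it over each of the $2^n$ sheets $\{t\}\times Q$ and glue along the identifications.

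\textbf{Step 1 (triangulating $Q$).} Take the barycentric-type subdivision of $Q$ obtained by coning from face barycenters. For each face $F$ of $Q$ (including $Q$ itself) choose a point $b_F$ in its relative interior. The simplices are $b_{F_0}b_{F_1}\cdots b_{F_k}$ for chains $F_0 \subsetneq F_1 \subsetneq \cdots \subsetneq F_k$. Because $Q$ is convex, this is a geometric simplicial complex triangulating $Q$. Each simplex lies in $\{t\}\times Q$, and its vertex $b_{F}$ is identified exactly with those $b_F$ in sheets $s$ with $t-s\in G_F$.

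\textbf{Step 2 (vertices and the quotient complex).} A face $F$ of dimension $j$ has codimension $n-j$, and since $Q$ is simple it lies in exactly $n-j$ facets. Their characteristic vectors are linearly independent, so $|G_F|=2^{n-j}$. The vertex $b_F$ therefore produces $2^n/2^{n-j}=2^j$ distinct points in $N$. Summing over faces gives
\[
2^n\cdot 1 + 2^{n-1}f_{n-1}+\cdots+2f_1+f_0,
\]
where $Q$ itself is the unique $n$-face. The candidate complex $X$ has vertices the classes $[t,b_F]$ and simplices the classes $[t,\sigma]$ for $\sigma$ in the subdivision. The $\ZZ_2^n$-action $s\cdot[t,x]=[s+t,x]$ sends simplices to simplices, so $X$ is equivariant once it is shown to be a triangulation.

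\textbf{Step 3 (main obstacle: $X$ is a genuine simplicial complex homeomorphic to $N$).} This is the step I expect to be hardest. Two things must be checked. First, distinct simplices have distinct vertex sets, and intersections are faces. Second, no simplex gets its own vertices identified.

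For the second point, take a chain $F_0\subsetneq\cdots\subsetneq F_k$. The vertex $b_{F_0}$ carries the largest group $G_{F_0}$, and $G_{F_k}\subseteq\cdots\subseteq G_{F_0}$. Since identifications happen only within the same point $b_F$, different vertices of one simplex stay distinct.

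For the first point, argue that two simplices $[t,\sigma]$ and $[s,\sigma']$ sharing the same vertex classes satisfy $\sigma=\sigma'$ as chains. Moreover $t-s\in G_{F_k}$, where $F_k$ is the top face of the chain, which is the smallest group in the chain. The two simplices are then identified pointwise, because every point of $\sigma$ lies in the relative interior of some face containing $F_k$, whose group contains $G_{F_k}$. The same bookkeeping shows that intersections of simplices are again simplices, namely the classes of common subchains.

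Finally, $|X|\to N$ is a continuous bijection from a compact space to a Hausdorff one, hence a homeomorphism. This completes the proof of Proposition~\ref{equivariant}, which is also \cite[Lemma 3.6]{BS}.
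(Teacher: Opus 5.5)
Your proposal is correct and takes essentially the same approach as the source: the paper gives no proof of its own but cites \cite[Lemma 3.6]{BS}, which likewise pulls the barycentric subdivision of $Q$ back to the $2^n$ copies in $(\ZZ_2^n\times Q)/\sim$, and the vertex count $\sum_j 2^j f_j$ is exactly your count of $2^{\dim F}$ preimages for each face barycenter. There is one wording slip in Step 3: a point of $b_{F_0}\cdots b_{F_k}$ lies in the relative interior of $F_m$, where $m$ is the largest index with a nonzero barycentric coordinate, so $F_m\subseteq F_k$ (contained in $F_k$, not containing it), and this is what gives the inclusion $G_{F_m}\supseteq G_{F_k}$ that you then use.
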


\section{Equivariant triangulation of $\Sp^n$}\label{sphere}
In this section, we present some results on the equivariant triangulations of spheres and characterize the equivariant triangulations of the $n$-sphere with $2n+2$ vertices.

Now, we discuss the natural action of $\ZZ_2^{n+1}$ on $\Sp^{n}$. We consider $$\Sp^{n} = \{(x_1, \ldots, x_{n+1}) \in \RR^{n+1} : x_1^2 + \cdots + x_{n+1}^2 =1\}$$ and
$\ZZ^{n+1}_2: =\{-1, 1\}^{n+1} \subset \RR^{n+1}$ for $n \geq 0$. The group $\ZZ^{n+1}_2$ acts
on $\RR^{n+1}$ by
\begin{equation}
 (t_1, \ldots, t_{n+1}) \times (x_1, \ldots, x_{n+1}) \to (t_1x_1, \ldots, t_{n+1}x_{n+1}).
\end{equation}
This is an effective smooth action, called standard action and denoted
by $$\rho \colon \ZZ_2^{n+1} \times \RR^{n+1} \to \RR^{n+1}.$$ The sphere $\Sp^{n}$ is invariant under this action. The orbit space $S^{n}/ \ZZ_2^{n+1}$ is $$\{(x_1, \ldots, x_{n+1}) \in \RR^{n+1} : 0 \leq x_1, \ldots,
0 \leq x_{n+1}~ \mbox{and} ~ x_1^2 + \cdots + x_{n+1}^2 =1\}$$ which is affinely diffeomorphic as manifold with corners to the $n$-simplex $\Delta^{n}$. In this section, we consider this
$\ZZ_2^{n+1}$-action on $\Sp^{n}$.

\begin{lemma}\label{lem3}
Let $X$ be a $\ZZ^{n}_2$-equivariant triangulation of $\Sp^{n-1}$ for some $n\geq 2$. Then $\mathrm{Card}(V(X))$ is even and $\mathrm{Card}( V(X)) \geq 2n$
\end{lemma}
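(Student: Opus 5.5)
We will work with the standard action of $\ZZ_2^{n}$ on $\Sp^{n-1}\subset\RR^{n}$ by sign changes of coordinates. By Corollary \ref{subgroup}, every $\mu\in\ZZ_2^n$ acts on $X$ as a simplicial automorphism, so $\mu$ permutes $V(X)$. The plan is to get both claims by analysing this permutation action on $V(X)$. Evenness will come from the antipodal element. The lower bound $2n$ will come from the coordinate reflections.

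\emph{Evenness.} Let $\mu_0=(-1,\ldots,-1)$ be the antipodal map $x\mapsto -x$. It acts freely on $\Sp^{n-1}$, so it fixes no vertex. Since $\mu_0^2=1$, it partitions $V(X)$ into orbits of size exactly $2$, hence $\mathrm{Card}(V(X))$ is even. The point to be careful about is that the action on $|X|$ really restricts to the action on vertices given by the simplicial map. Because the definition of equivariant triangulation identifies $\mu$ with the homeomorphism of $N\cong|X|$ and requires simplices to go to simplices, a vertex must go to a vertex.

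\emph{Lower bound.} For each $i=1,\ldots,n$, let $H_i^{\pm}=\{x\in\Sp^{n-1}: \pm x_i>0\}$ be the two open hemispheres swapped by the reflection $\mu_i$. I would show that each open hemisphere contains a vertex.

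First, the vertices cannot all lie in the great sphere $\{x_i\le 0\}$. The union of the simplices is all of $\Sp^{n-1}$, and every point of a simplex lies in its geometric span in $|X|$. A point with $x_i>0$ lies in the interior of some simplex $\sigma$.

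The main obstacle is that the triangulation is only topological, not geometric, so "convex combination of vertices" has no meaning on the sphere. I would instead argue by contradiction. Suppose no vertex lies in $H_i^{+}$. Then every vertex $v$ satisfies $x_i(v)\le 0$, and applying $\mu_i$, the vertex $\mu_i v$ satisfies $x_i\ge 0$. Hence every vertex lies in the equator $E_i=\{x_i=0\}$, which is the fixed set of $\mu_i$. So $\mu_i$ fixes all vertices, and being a simplicial automorphism it is the identity on $|X|$. This contradicts effectiveness, since $\mu_i\neq 1$ acts nontrivially.

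Thus some vertex $v_i$ has $x_i(v_i)\neq0$, and then $v_i$ and $\mu_i v_i$ lie in opposite open hemispheres. That alone only gives $2$ vertices, so to reach $2n$ I would refine the argument. Consider the set $S_i^{+}=V(X)\cap H_i^{+}$ for each $i$, and choose vertices maximizing $x_i$. This fails to separate the $i$'s, so a better route is to use the fixed subsphere $\Sp^{0}_i=\{\pm e_i\}=\bigcap_{j\ne i}E_j$, which is the fixed set of the subgroup generated by $\mu_j$ for $j\ne i$.

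The key step is that each point $\pm e_i$ must be a vertex. The point $e_i$ lies in the relative interior of a unique simplex $\sigma$, and the subgroup $K_i$ fixing $e_i$ maps $\sigma$ to itself. A simplicial automorphism fixing an interior point of $\sigma$ permutes its vertices. Near $e_i$ the group $K_i\cong\ZZ_2^{n-1}$ acts on the tangent space as the full sign-change group. If $\dim\sigma\ge1$, the open simplex $\sigma$ would be a $K_i$-invariant cell through $e_i$ on which $K_i$ fixes only a proper affine subspace (the barycentric fixed set). I would argue that this fixed set must then be a point, so $\dim\sigma$ equals the rank of the action on the directions of $\sigma$. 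That forces the fixed locus in $\sigma$ to have positive dimension, unless $\sigma$ is a vertex, while the true fixed locus near $e_i$ is just $e_i$. Making this dimension comparison precise is the hardest part. Granting it, the $2n$ points $\pm e_1,\ldots,\pm e_n$ are distinct vertices, which gives $\mathrm{Card}(V(X))\ge 2n$.
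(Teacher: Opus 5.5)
\paragraph{Verdict.} Your evenness argument is correct: the antipodal element $\mu_0=(-1,\ldots,-1)$ acts freely, so it pairs off the vertices. The lower bound, however, has a genuine gap.

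\paragraph{The false key step.} Your key step is that each of the points $\pm e_i$ must be a vertex of $X$, and this is false. Already for $n=2$, the $4$-cycle on the vertices $(\pm a,\pm b)$ with $a,b\neq 0$ is a $\ZZ_2^2$-equivariant triangulation of $\Sp^1$. In it, $\pm e_1$ and $\pm e_2$ are midpoints of edges, not vertices. For example, the edge joining $(a,b)$ and $(a,-b)$ is preserved by $\mu_2$, which swaps its endpoints and fixes $(1,0)$. Type $(I)$ of Corollary~\ref{8vertex}, with vertex set $\{(\pm a,\pm b,0,0),(0,0,\pm c,\pm d)\}$, is another example in which no $\pm e_i$ is a vertex.

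The dimension comparison you hoped to make precise fails for a concrete reason. The stabilizer $K_i$ can act transitively on the vertices of the simplex whose relative interior contains $e_i$. In that case the fixed set of $K_i$ in that simplex is just its barycenter, which matches the fixed locus $\{e_i\}$ near $e_i$, so nothing forces the simplex to be a vertex. Your hemisphere argument, as you note yourself, only produces two vertices. So the bound $2n$ is left unproved.

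\paragraph{The missing idea.} The fix is simpler and uses only $\mu_0$, which you already have.
\begin{enumerate}
\item Take an $(n-1)$-simplex $\sigma=u_1\cdots u_n$ of $X$. Since $\mu_0$ is free, $\mu_0(u_i)\neq u_i$.
\item Suppose $\mu_0(u_i)=u_j$ for some $j\neq i$. Then $\mu_0$ maps the edge $u_iu_j$ to the simplex with the same vertex set, i.e.\ to itself, and swaps its endpoints.
\item A homeomorphism of an arc that swaps its endpoints has a fixed point, by the intermediate value theorem. This contradicts freeness of $\mu_0$.
\item Hence $\mu_0(V(\sigma))\cap V(\sigma)=\emptyset$, so $V(\sigma)\cup\mu_0(V(\sigma))$ consists of $2n$ distinct vertices.
\end{enumerate}
This is essentially the paper's argument, which it phrases geometrically in terms of an edge passing through the origin.
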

\begin{proof}
Let $\sigma \in X$ be an $(n-1)$-simplex and  the vertex set of $\sigma$ be $\{u_1, \ldots, u_n\}$. Consider $\mu=(-1, \ldots, -1) \in \ZZ_2^n$.
Suppose that $\mu(u_i) = u_j$ for some $i, j \in \{1, \ldots, n\}$ and $i\neq j$. Let $u_iu_j$ and $\mu(u_iu_j)$ be the same edges then $u_iu_j$ must pass through origin, and thus $X$ pass through origin.
Since the $\ZZ_2^n$-equivariant triangulation of $\Sp^{n-1}$ does not pass through origin, $u_i u_j$ and $\mu(u_iu_j)$
are two distinct edges in $X$. On the other hand  $\mu^2$ is identity in $\ZZ_2^n$. This implies that $u_i= \mu(u_j)$ and
$u_iu_j \cap \mu(u_iu_j) = \{u_i, u_j\},$ which is a contradiction.
Therefore, $\mu(u_i) \notin \{u_1, \ldots, u_n\}$ for any $i=1, \ldots, n$. Since $u_i \neq u_j$ for $i \neq j$,
$\mu(u_i) \neq \mu(u_j)$ for $i \neq j$. This proves that $\mathrm{Card} (V(X)) \geq 2n$.

Further,  $X$ is an equivariant  triangulation and the cardinality of the orbit $\ZZ_2^n x$ is even for every $x\in X$, and orbits induces a partition on the $V(X)$. Therefore, $\mathrm{Card}(V(X))$ is even.  
\end{proof}
\begin{lemma}\label{missingedges}
Let $X$ be a $\ZZ_2^{n+1}$-equivariant triangulation of $\Sp^n$. If there is a vertex $v$ in $X$ with exactly $k ~(3\leq k\leq n+1)$ non-zero coordinates then there are at least $2^{k-1}(2^k-k-1)$ missing edges in $X$.
    
\end{lemma}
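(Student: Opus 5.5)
Let $v$ be a vertex with exactly $k$ non-zero coordinates, say in positions $i_1,\dots,i_k$. The orbit of $v$ under $\ZZ_2^{n+1}$ has exactly $2^k$ points: the sign patterns on those $k$ coordinates, with the remaining coordinates zero. All of these are vertices of $X$, since the triangulation is equivariant. My plan is to show that, among the $\binom{2^k}{2}$ pairs of orbit points, at most $k\,2^{k-1}$ can be edges of $X$. This gives at least
\[
\binom{2^k}{2}-k2^{k-1}=2^{k-1}(2^k-1)-k2^{k-1}=2^{k-1}(2^k-k-1)
\]
missing edges, which is exactly the claimed bound.

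The key step is the following claim: if $\mu(v)$ and $\nu(v)$ differ in at least two of the $k$ nonzero coordinates, then $\mu(v)\nu(v)$ is not an edge. By equivariance it suffices to show that $v\,\tau(v)$ is not an edge whenever $\tau$ flips at least two of the nonzero coordinates of $v$. I would argue as in Lemma \ref{lem3}.

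Suppose $\tau$ flips an even number of these coordinates. Let $\tau'$ be the element that flips exactly one of them, say $j$. Then $\tau'\tau$ flips an odd number of coordinates, at least one. Instead of pursuing this, I would first try the cleaner route of restricting to the fixed sets. Let $H$ be the subgroup generated by the sign changes in the coordinates other than $i_1,\dots,i_k$. Its fixed set in $\Sp^n$ is the $(k-1)$-sphere $S$ spanned by the coordinates $i_1,\dots,i_k$. The fixed set of a simplicial action is a subcomplex of a suitable subdivision, and I would want to argue that the simplices of $X$ lying in $S$ triangulate $S$ as a $\ZZ_2^k$-equivariant triangulation. Then the orbit of $v$ is a set of $2^k$ vertices of a $\ZZ_2^k$-equivariant triangulation of $\Sp^{k-1}$. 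The antipodal argument of Lemma \ref{lem3} then shows that $v$ and $-v$ (all $k$ coordinates flipped) are never adjacent, since otherwise the edge and its image under $-1$ would coincide and pass through the origin.

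To handle a general $\tau$ flipping $m\ge 2$ coordinates, I would apply the same argument inside the sub-sphere spanned by the $m$ flipped coordinates. On that sub-sphere $\tau$ acts antipodally, and a segment joining $v$ to $\tau(v)$ would be carried to itself by $\tau$ with its endpoints swapped. So the edge $v\tau(v)$ is $\tau$-invariant, and hence contains a $\tau$-fixed point in its interior, namely its midpoint in simplicial coordinates. This fixed point lies in $\mathrm{Fix}(\tau)$, which meets the orbit structure only where all $m$ flipped coordinates vanish. I then need a contradiction from the local standardness of the action: a point in the open edge fixed by $\tau$ has isotropy containing $\tau$, while the edge, being a $1$-simplex meeting only the two vertices with full $k$-support, should have constant isotropy on its interior equal to the isotropy of its endpoints, which does not contain $\tau$. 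Making this isotropy-constancy rigorous for the given triangulation, perhaps by the Lemma \ref{lem3}-style argument that an edge and its image under $\tau$ either coincide or meet only in vertices, is what I expect to be the main obstacle.

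Granting the claim, the edges among the orbit vertices can only join points differing in exactly one sign. The orbit with this adjacency is the $k$-cube graph, which has $k2^{k-1}$ edges. This completes the count.
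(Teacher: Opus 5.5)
Your reduction and final count are right, but the key claim is left unproved: orbit vertices $x$ and $y=\tau(x)$ that differ in $m\ge 2$ signs are never adjacent. The contradiction you propose for it is false. In this paper's notion of equivariant triangulation, a reflection may map an edge onto itself with its endpoints swapped, so isotropy is not constant on open edges.

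Concretely, cone each square face of $[-1,1]^3$ from the point $\pm\tfrac32 e_i$ beyond it and project radially. This gives a $\ZZ_2^3$-equivariant triangulation of $\Sp^2$. In it, the reflection in $x_1=0$ carries the edge from $(1,1,1)$ to $(-1,1,1)$ to itself, fixing the midpoint but neither endpoint. Any argument that forbids a $\tau$-invariant edge with swapped endpoints would also forbid exactly the single-sign-flip edges your cube-graph count must allow.

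The sub-sphere idea fails for $2\le m<k$ for a related reason. The vertex $v$ does not lie in the sub-sphere of the $m$ flipped coordinates. Also, $\tau$ has a nonempty fixed set on the $(k-1)$-sphere $S$, so a $\tau$-fixed point on $v\tau(v)$ contradicts nothing. Only for $m=k$ does your route close. There, $H$ fixes pointwise any edge between two $H$-fixed vertices, since an involution of an arc fixing both ends is the identity. So the edge lies in $S$, where $\tau$ acts antipodally and has no fixed point.

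The missing idea is to apply a group element other than $\tau$. Choose a coordinate $i$ in which $x$ and $y$ differ. Since $x_i$ and $y_i$ are nonzero with opposite signs, some interior point $z$ of the edge $xy$ has $z_i=0$. Let $\mu$ flip only the $i$-th coordinate. Then $\mu(z)=z$, so $z\in|xy|\cap|\mu(x)\mu(y)|$. Because $x$ and $y$ also differ in a second coordinate $j$, neither $\mu(x)$ nor $\mu(y)$ lies in $\{x,y\}$. Thus $\mu(x)\mu(y)$ is an edge of $X$ whose vertex set is disjoint from $\{x,y\}$, yet it meets $xy$ in an interior point. This contradicts that two simplices of a simplicial complex meet in a common face.

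This is the paper's argument. It fails precisely when $x$ and $y$ differ in one coordinate, since then $\mu$ swaps them and $\mu(xy)=xy$. With the claim established this way, your count $\binom{2^k}{2}-k2^{k-1}=2^{k-1}(2^k-k-1)$ completes the proof.
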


\begin{proof}

Without loss of generality, assume that the first $k$ coordinates of $v$ are nonzero. Let  $v = (v_1,v_2,\ldots,v_k,0,0,\ldots,0)$.  

By the group action of $\ZZ_2^{n+1}$ on the vertex $v$, we obtain $2^k$ vertices in $X$ whose first $k$ coordinates are nonzero and the remaining coordinates are zero. Let $V$ denote the set of all such vertices.  

Let $x,y \in V$ be two vertices whose coordinate signs differ at least at two positions. Suppose that $x(i)=-y(i)$ and $x(j)=-y(j)$. Let $xy$ be an edge in $X$. We consider the group element $\mu\in\ZZ_2^{n+1}$ with $\mu(i)=-1$ and $\mu(k)=1$ for $k\neq i$. By acting with $\mu$ on $xy$, we obtain an edge between two vertices $p$ and $q$, where  
$p(i)=-x(i)$, $p(k)=x(k)$ for all $k\neq i$, and $q(i)=-y(i)$, $q(k)=y(k)$ for all $k\neq i$.  

Let  
$$
xy = \{(\psi_1(t),\psi_2(t),\ldots, \psi_{i}(t),\ldots, \psi_{j}(t),\ldots, \psi_{k}(t),0,\ldots,0)\in \Sp^n : t\in[0,1]\}.
$$  
Then  
$$
pq = \{(\psi_1(t),\psi_2(t),\ldots, -\psi_{i}(t),\ldots, \psi_{j}(t),\ldots, \psi_{k}(t),0,\ldots,0)\in \Sp^n : t\in[0,1]\}.
$$  

Since $\psi_{i}:[0,1]\rightarrow \RR$ is injective and continuous, with $\psi_{i}(0)=x(i)$ and $\psi_{i}(1)=-x(i)$, there exists a unique $t'\in (0,1)$ such that $\psi_i(t')=0$. This implies that  
$$(\psi_1(t'),\psi_2(t'),\ldots, \psi_{i-1}(t'),0,\psi_{i+1}(t'),\ldots, \psi_{j}(t'),\ldots, \psi_{k}(t'),0,\ldots,0)\in xy\cap pq.$$  
This is a contradiction because $\{x,y\}\cap\{p,q\}=\emptyset$, and therefore $xy\cap pq$ must be empty in $X$.  

Similarly, if $pq$ is an edge, then using the group element $\epsilon\in \ZZ_2^{n+1}$ with $\epsilon(j)=-1$ and $\epsilon(k)=1$ for $k\neq j$, we find that $\epsilon(pq)=xy$. Hence, the edges $xy$ and $pq$ must be missing edges in $X$.  

This implies that each vertex in $V$ can be joined only with those vertices in $V$ whose coordinate signs differ exactly at one position. Thus, each vertex can be joined with at most $k$ vertices in $V$. Therefore, there can be at most $k2^{k-1}$ edges among the vertices of $V$. Hence, there are at least  
$\binom{2^k}{2}-k2^{k-1} \;=\; 2^{k-1}(2^k-k-1)$  missing edges in $X$.
\end{proof}

\begin{lemma}\label{lem6}
 Fix $n\geq 2$. There is no $\ZZ_2^{n+1}$-equivariant triangulation of $\Sp^n$ with $2^{n+1}$ vertices with a
vertex $(x_1,x_2,\dots,x_{n+1})$ such that $x_1,x_2,\dots, x_{n+1}$ are non zero.
\end{lemma}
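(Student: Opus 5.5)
Suppose, for contradiction, that $X$ is a $\ZZ_2^{n+1}$-equivariant triangulation of $\Sp^n$ with exactly $2^{n+1}$ vertices, one of which is $v=(x_1,\ldots,x_{n+1})$ with all coordinates nonzero. The plan is to show that the orbit of $v$ uses up every vertex, and then to derive a contradiction from the restrictions on edges given by Lemma~\ref{missingedges}.

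\textbf{Step 1: the orbit of $v$ is the whole vertex set.} Since all coordinates of $v$ are nonzero, the stabilizer of $v$ is trivial, so the orbit $\ZZ_2^{n+1}v$ has $2^{n+1}$ elements. All of them are vertices of $X$ by equivariance. Hence $V(X)$ is exactly this orbit, and every vertex has all $n+1$ coordinates nonzero.

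\textbf{Step 2: restrict the edges.} The argument in the proof of Lemma~\ref{missingedges}, with $k=n+1$, shows that two vertices whose sign patterns differ in at least two positions cannot span an edge. That argument only uses an intermediate-value crossing with a reflected copy of the edge, so it applies equally for $k=2$ when $n+1=2$. Consequently the $1$-skeleton of $X$ is a subgraph of the $(n+1)$-cube graph $Q_{n+1}$ on the sign vectors, and each vertex has degree at most $n+1$.

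\textbf{Step 3: derive the contradiction.} The cube graph is bipartite, so it contains no triangles. But $X$ is a triangulation of $\Sp^n$ with $n\geq 2$, so it contains $2$-simplices, and every $2$-simplex gives a triangle in the $1$-skeleton. This contradiction proves the lemma.

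As a cross-check, one can also count degrees. In a triangulated closed $n$-manifold, the link of a vertex is a triangulated $(n-1)$-sphere, which has at least $n+1$ vertices. This alone only forces degree exactly $n+1$, so it does not finish the proof; the triangle argument is what is decisive.

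\textbf{Main obstacle.} The key step is transferring the ``differ in at least two positions $\Rightarrow$ missing edge'' argument, since Lemma~\ref{missingedges} is stated only as a count. The proof will restate that part of the argument explicitly. It uses the reflection $\mu$ in coordinate $i$: the image $\mu(xy)$ meets $xy$ at the point where the $i$-th coordinate vanishes, while $\{x,y\}\cap\{\mu x,\mu y\}=\emptyset$ because $x$ and $y$ differ in a second coordinate $j$. Once this is in place, the bipartiteness argument is immediate.
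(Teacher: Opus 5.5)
Your proof is correct, but it finishes differently from the paper. Both arguments use the same geometric input from Lemma~\ref{missingedges}: once $V(X)$ is the free orbit of $v$, two vertices whose sign vectors differ in at least two positions cannot span an edge. You are also right that only the intermediate value theorem is needed there, not the injectivity of $\psi_i$ asserted in the paper.

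The paper turns this into a count: $X$ has at most $(n+1)2^n$ edges. It then invokes the lower bound $g_2(X)\geq 0$ of Walkup and Barnette to get $(n+1)2^{n+1}-\binom{n+2}{2}\leq (n+1)2^n$, which fails for $n\geq 2$. Its displayed chain has a slip, writing $2^n(2n+1)$ in place of $(n+1)2^n$, but the conclusion survives since $2^{n+1}>n+2$.

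You instead use the shape of the surviving edge set. The $1$-skeleton is a subgraph of the bipartite cube graph $Q_{n+1}$, so it has no $3$-cycles, while any $2$-face of $X$ gives one. Your route is shorter and avoids the lower bound theorem entirely. It also shows more generally that no $2$-simplex can have all its vertices in a single vertex orbit.

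The paper's counting route is the one that adapts when the orbit does not exhaust the vertex set. The same missing-edge count plus $g_2\geq 0$ is what rules out the $10$-vertex spheres in the proof of Lemma~\ref{lem:nointerior}. There a triangle-free argument alone does not immediately give a contradiction.

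Two small remarks. First, your aside about $k=2$ is unnecessary: $n\geq 2$ gives $k=n+1\geq 3$, so Lemma~\ref{missingedges} applies as stated. Second, your degree cross-check does in fact close the argument as well. A vertex of degree exactly $n+1$ in a triangulated $n$-manifold has link $\partial\Delta^n$, whose vertices are pairwise adjacent when $n\geq 2$, and this again produces a triangle.
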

\begin{proof}
Let $X$ be a $\ZZ_2^{n+1}$-equivariant triangulation of $\Sp^n$ with $2^{n+1}$ vertices such that
$(x_1,x_2,\dots,x_{n+1})\in V(X)$ and $x_1, x_2, \dots,  x_{n+1}$ are non zero.  Then from Lemma \ref{missingedges}, $X$ has at least $2^{n}(2^{n+1}-n-2)$ missing edges. Thus, at most $(n+1)2^n$ edges. Since $g_2(X)\geq 0$, $f_1(X)\geq (n+1)f_0(X)-\binom{n+2}{2}$.
Now, we have 
\begin{align*}
  (n+1)f_0(X)-\binom{n+2}{2} \leq  f_1(X)\leq & (n+1)2^n\\
  (n+1)2^{n+1}-\binom{n+2}{2}\leq & (n+1)2^n\\
  2^n((n+1)2-1)\leq &\binom{n+2}{2}\\
  2^n(2n+1)\leq &\binom{n+2}{2}. \\
    \end{align*}
   The last inequality  does not hold for $n\geq 2$. Therefore,  $X$ does not exist.
\end{proof}

\begin{prop}\label{suspension}
Let $X$ be a $\ZZ_2^{n+1}$-equivariant triangulation of an $n$-sphere such that $\lk(u, K) = \lk(v, K)$ and $\mu(\st(u, K)) \cong \st(v, K)$ for some $u, v \in V(X)$ and $\mu\in \ZZ_2^{n+1}$. Then we have $X = \lk(u,X) \star \{u, v\}$.
\end{prop}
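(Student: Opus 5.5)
The key observation is that the hypotheses are essentially combinatorial: $K$ here means $X$, and the group action matters only insofar as it guarantees the two stars are isomorphic. So the plan is to show that if two distinct vertices $u,v$ of a triangulated $n$-sphere have the same link, then $X$ is exactly the suspension of that link with apexes $u$ and $v$.

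First, set $L:=\lk(u,X)=\lk(v,X)$. Since $X$ triangulates a closed $n$-manifold, $L$ is a triangulated $(n-1)$-sphere (a homology sphere suffices). The vertex $v$ does not lie in $L$: otherwise $v\in \lk(u,X)=\lk(v,X)$, which is impossible since no vertex lies in its own link. Symmetrically, $u\notin \lk(v,X)$, so $uv$ is not an edge. Hence $\st(u,X)=u\star L$ and $\st(v,X)=v\star L$ are two $n$-balls meeting exactly along their common boundary $L$. (The hypothesis $\mu(\st(u,X))\cong\st(v,X)$ only reconfirms that the stars are isomorphic cones over $L$.)

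Second, consider $Y:=\st(u,X)\cup\st(v,X)=L\star\{u,v\}$. This is a subcomplex of $X$ that is itself a triangulated $n$-sphere, namely the suspension of the $(n-1)$-sphere $L$. I would then show $Y=X$. Every $(n-1)$-simplex of $Y$ lies in exactly two $n$-simplices of $Y$, because $Y$ is a closed pseudomanifold. Since $X$ is also a pseudomanifold, each such $(n-1)$-simplex lies in exactly two $n$-simplices of $X$, and these must therefore be the two in $Y$. So no $n$-simplex of $X\setminus Y$ shares a facet with $Y$. Because $X$ is strongly connected (it is a connected closed manifold), its facets cannot split into two nonempty classes with no shared ridges. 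Hence every $n$-simplex of $X$ lies in $Y$, and so $X=Y=\lk(u,X)\star\{u,v\}$.

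An alternative for this step is topological: $|Y|\subseteq|X|$ is an $n$-sphere embedded in a connected $n$-manifold, so by invariance of domain it is open and closed, hence equal to $|X|$. This also works directly.

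The main obstacle is to argue cleanly that $u$ and $v$ are not adjacent and that $L$ is a genuine sphere, so that the union of the two stars is a closed pseudomanifold. After that, the strong connectivity argument is routine. I would also note explicitly that the equivariance hypothesis is not really needed beyond ensuring the stars are isomorphic.
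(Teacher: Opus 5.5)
The paper states Proposition~\ref{suspension} without proof and only invokes it in Theorem~\ref{2nvertexsphere}. There is therefore no argument of the authors to compare with, and your proof fills that gap. Your argument is correct. You show $uv\notin X$ and $\st(u,X)\cap\st(v,X)=L$. Your ridge count then shows every ridge of $Y=L\star\{u,v\}$ has both of its $X$-cofacets in $Y$. Strong connectivity of $X$ forces every facet of $X$ into $Y$, and purity gives $X=Y$. You are also right that equivariance and the hypothesis $\mu(\st(u,X))\cong\st(v,X)$ play no role.

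There are a few points to tighten.

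\emph{The hypothesis $u\neq v$.} You assume it silently, but the statement lacks it and needs it. For $u=v$ and $\mu$ the identity, all hypotheses hold, yet $\lk(u,X)\star\{u\}=\st(u,X)$ is a ball rather than $X$. State it as an added assumption; it holds in the paper's application, where $v=-u$.

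\emph{Whether $L$ is a sphere.} You never need $L$ to be a sphere, and for non-combinatorial triangulations it need not be one. For example, take the double suspension of a homology $3$-sphere, which is a $5$-sphere by the double suspension theorem. Its two outer suspension points have the same link, and that link is not even a manifold. The pseudomanifold property of $X$ alone gives what you need:
\begin{itemize}
\item if a ridge of $Y$ contains $u$ (or $v$), every facet of $X$ containing it lies in $\st(u,X)$ (or $\st(v,X)$);
\item otherwise the ridge is a facet $\tau$ of $L$, and its two cofacets in $X$ are $u\tau$ and $v\tau$.
\end{itemize}

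\emph{The invariance-of-domain alternative.} For the same reason, this alternative is weaker than you state. It needs $|Y|$ to be a manifold, which requires $L$ to be a genuine sphere. That holds for combinatorial triangulations, for instance whenever $n\le 3$. ``A homology sphere suffices'' does not cover this route, so keep the combinatorial argument as the main proof.
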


\begin{lemma}\label{twononzero}
    Let $X$ be a $\ZZ^{n}_2$-equivariant triangulation of $\Sp^{n-1}$ for some $n\geq 2$. If $\mathrm{Card}(V(X))=2n$ then for any vertex $x\in V(X)$, at most two coordinates of $x$ are non-zero. Moreover, the number of choices for the $V(X)$ is $\lfloor\frac{n}{2}\rfloor +1$.
\end{lemma}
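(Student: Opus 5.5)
Throughout, $X$ is a $\ZZ_2^n$-equivariant triangulation of $\Sp^{n-1}$ with exactly $2n$ vertices, and $V(X)$ is a union of $\ZZ_2^n$-orbits. The plan is to count orbit sizes and show that the only way to reach total size $2n$ is with orbits of size $2$ and $4$. That gives the coordinate restriction, and the count of choices then follows by counting how many size-$4$ orbits can occur.

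First I record the orbit sizes. A vertex $x\in\Sp^{n-1}$ with exactly $k$ nonzero coordinates has an orbit of size $2^k$, since the stabilizer is generated by the sign changes in the zero coordinates. So $2n=\sum_{\text{orbits}} 2^{k_O}$, where $k_O\geq 1$ is the support size of orbit $O$. Next I show that the supports of the orbits together cover all $n$ coordinates. If some coordinate $i$ were zero on every vertex, then $|X|$ would lie in the hyperplane $x_i=0$, because each simplex is spanned by its vertices and the equivariant triangulation is realized compatibly with the linear action. This is impossible, since $|X|=\Sp^{n-1}$ contains $e_i$. Using the geometric convention as in the proof of Lemma \ref{lem3}, I will phrase this as: the coordinate function $x_i$ vanishes on $V(X)$, and the reflection $\mu_i$ then fixes every vertex. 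Hence $\mu_i$ acts as the identity on $X$, which contradicts effectiveness of the action on $\Sp^{n-1}$. So every coordinate lies in the support of some orbit, and therefore $\sum_O k_O\geq n$.

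Now I combine the two counts. We have $\sum_O 2^{k_O}=2n\le 2\sum_O k_O$, so $\sum_O\bigl(2^{k_O}-2k_O\bigr)\le 0$. The quantity $2^k-2k$ equals $0$ for $k=1,2$ and is positive for $k\ge 3$. Hence every $k_O\le 2$, which proves the first claim. We also get equality $\sum_O k_O=n$, so the supports are pairwise disjoint and partition $\{1,\dots,n\}$. An alternative route for excluding $k\ge3$ would be Lemma \ref{missingedges} together with $g_2\ge0$, but the counting argument above is cleaner.

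For the second claim, $V(X)$ is determined combinatorially by a partition of the $n$ coordinates into singletons and pairs. A singleton orbit $\{\pm e_i\}$ is forced, while a pair orbit gives the four points $(\pm a,\pm b)$ in two coordinates. Up to the symmetry of permuting coordinates (and the choice of $a,b$), the configuration is determined by the number $m$ of pairs, where $0\le m\le\lfloor n/2\rfloor$. This gives $\lfloor n/2\rfloor+1$ choices. I still need to check that each value of $m$ is realizable. To do this I take the join of $m$ copies of a square $4$-gon on $\{(\pm a,\pm b)\}$ with $n-2m$ copies of $\Sp^0=\{\pm e_i\}$, realized radially on the sphere. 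This is an equivariant triangulation, since the join of spheres is a sphere of dimension $2m-1+(n-2m)\cdot 1+\ldots = n-1$.

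The main obstacle is making the covering-of-coordinates step rigorous in the paper's loose sense of triangulation. The count itself is routine, and I would lean on the effectiveness argument given above.
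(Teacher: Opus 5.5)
Your proposal is correct and follows essentially the paper's route. Both arguments rest on two facts: every coordinate is nonzero on some vertex, and an orbit with support size $k$ has $2^k>2k$ elements once $k\ge 3$; configurations are then counted by the number of four-element orbits. Your global inequality $\sum_O(2^{k_O}-2k_O)\le 0$ is a tidier form of the paper's estimate $2^k+2(n-k)>2n$ and also shows the supports are disjoint, which the paper's count of partitions of $2n$ into $2$'s and $4$'s uses tacitly. Your join construction supplies the realizability of each configuration, which the paper does not check.
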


\begin{proof}
Fix $i \in \{1,2,\dots,n\}$  such that $x_i = 0$ for every vertex $x = (x_1, \ldots, x_n) \in V(X)$.  
We claim that for every $y \in |X|$, $y_i = 0$. On contrary suppose that there exists $z \in |X|$ such that $z_i \neq 0$.  
Then there must be a simplex $\sigma \in X$ such that $z \in |\sigma|$.  But all the vertices of $\sigma$ have $i$-th coordinate zero.  
Since the vertices of $\sigma$ are affinely independent, they lie in the hyperplane $x_i = 0$.  
Hence, any point in $|\sigma|$ must also have $i$-th coordinate zero, contradicting $z_i \neq 0$.  
Thus, $|X| \subseteq \mathbb{R}^{n-1}$, which is impossible.  
Therefore, for each $i \in \{1,2,\dots,n\}$, there exists a vertex $v \in X$ such that $v_i \neq 0$.

Now, consider a vertex $u \in V(X)$ with $k \ge 3$ non-zero coordinates and $n-k$ coordinates equal to zero.  
Without loss of generality, assume that $u_i \neq 0$ for $1 \le i \le k$ and $u_j = 0$ for $k+1 \le j \le n$.  
By the action of $\mathbb{Z}_2^n$ on $u$, we obtain $2^k$ vertices in $X$.  

On the other hand, for each $k+1 \le j \le n$, there exists a vertex $v$ such that $v_j \neq 0$.  
Again, by the action of $\mathbb{Z}_2^n$ on such vertices, we obtain at least $2(n-k)$ vertices in $X$.  Hence, $X$ has at least $2^k + 2(n-k) > 2n$ vertices if $k \ge 3$, which is a contradiction. Therefore, for every vertex $x \in X$, at most two coordinates of $x$ are non-zero.

One can observe from above that the number of choices for $V(X)$ is same as the number of partitions of $2n$ using only $2$'s and $4$'s.
Let $k$ be the number of $4$'s used in the partition. Each $4$ contributes $4$ to the total, so the remaining part $2n - 4k$ must be covered by $2$'s.  
The number of $2$'s used is then 
$m = \frac{2n - 4k}{2} = n - 2k.$
Since $m \ge 0$, we have $n - 2k \ge 0$, which implies $k \le \lfloor n/2 \rfloor$.  
Also, $k \ge 0$ by definition. Thus, $k$ can take all integer values from $0$ to $\lfloor n/2 \rfloor$. For each choice of $k$, there is exactly one corresponding partition consisting of $k$ $4$'s and $n-2k$ $2$'s. Therefore, the total number of partitions of $2n$ using only $2$'s and $4$'s is $\lfloor n/2 \rfloor + 1.$  
\end{proof}

\begin{theorem}\label{2nvertexsphere}
Let $X$ be a $\ZZ^{n}_2$-equivariant triangulation of $\Sp^{n-1}$ for some $n\geq 2$. If $\mathrm{Card}(V(X))=2n$ then $X$ is an octahedral sphere.
\end{theorem}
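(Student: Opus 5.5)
We must show that $X$ is the boundary of the cross-polytope, i.e.\ the join of $n$ copies of $\Sp^0$. The plan is to pair each vertex $u$ with its antipode $\mu(u)$, where $\mu=(-1,\ldots,-1)$, to show that every pair of non-antipodal vertices spans an edge, and then to show that $X$ is the $n$-fold join of these antipodal pairs.

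The first step is the antipodal pairing. By the proof of Lemma~\ref{lem3}, for every facet $\sigma=u_1\cdots u_n$ the $2n$ vertices $u_1,\ldots,u_n,\mu(u_1),\ldots,\mu(u_n)$ are distinct. Since $\mathrm{Card}(V(X))=2n$, these are all the vertices. So $V(X)$ splits into $n$ antipodal pairs $\{u_i,\mu(u_i)\}$, and every facet contains exactly one vertex from each pair. No edge joins a vertex to its antipode: such an edge would be carried to itself by $\mu$ with its endpoints swapped, so by the argument of Lemma~\ref{lem3} it would pass through the origin. Every simplex lies in some facet, so every simplex meets each antipodal pair in at most one vertex. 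Hence $X$ is a subcomplex of the join $J=\{u_1,\mu(u_1)\}\star\cdots\star\{u_n,\mu(u_n)\}$, which is the octahedral $(n-1)$-sphere.

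The second step is to prove that $X=J$. Both $|X|$ and $|J|$ are closed $(n-1)$-pseudomanifolds (in fact $(n-1)$-spheres), and $X\subseteq J$ has the same dimension $n-1$. A subcomplex of the connected strongly connected pseudomanifold $J$ that is itself a closed pseudomanifold of the same dimension must be all of $J$. To see this, take a facet $\tau$ of $X$ and a ridge $\rho\subset\tau$. In $J$ the ridge $\rho$ lies in exactly two facets. Since $X$ is a closed manifold, $\rho$ also lies in exactly two facets of $X$, so both $J$-facets through $\rho$ belong to $X$. Strong connectivity of $J$ then propagates this to every facet of $J$. As an alternative argument, the sub-sphere $|X|$ of the sphere $|J|$ of the same dimension must be all of $|J|$ by invariance of domain, since it is open and closed. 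Either way $X=J$, and $X$ is an octahedral sphere.

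The main obstacle is making the first step rigorous. The proof of Lemma~\ref{lem3} relies on the geometric realization, namely that simplices do not pass through the origin. It also implicitly uses that $\mu$ maps a simplex onto a simplex and that an edge swapped by $\mu$ must contain a $\mu$-fixed point. That fixed point would be the origin, which does not lie on $\Sp^{n-1}$; for a triangulation not lying on the sphere one argues with the fixed point of the involution on $|X|\cong\Sp^{n-1}$. I would take care to apply this argument to an arbitrary edge $u\mu(u)$, not only to edges inside a single facet. Lemma~\ref{twononzero} gives extra information about coordinates, but I expect it is not needed for the combinatorial conclusion.
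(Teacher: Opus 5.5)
Your proof is correct, and it takes a genuinely different route from the paper's. The paper argues by induction on $n$. It first uses Lemma~\ref{twononzero} to know that every vertex has at most two nonzero coordinates, and then splits into two cases: either some vertex lies on a coordinate axis, or every vertex has exactly two nonzero coordinates. In each case a suitable reflection (negating one coordinate, respectively two), together with a local counting argument in the second case, shows that two antipodal vertices have equal links. Proposition~\ref{suspension} then writes $X$ as the suspension of that link, and the link is fed back into the induction. You instead use only the single element $\mu=(-1,\ldots,-1)$. Its fixed-point-freeness on $|X|\cong\Sp^{n-1}$ rules out any simplex containing an antipodal pair. So the $2n$ vertices split into $n$ antipodal pairs and $X$ sits combinatorially inside the cross-polytope boundary $J$. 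A strong-connectivity (ridge-in-two-facets) or invariance-of-domain argument then forces $X=J$. Your route avoids induction and coordinate bookkeeping. It also avoids Proposition~\ref{suspension}, which the paper states without proof, and the need to justify that $\lk(u^1,X)$ is again a $\ZZ_2^{m-1}$-equivariant triangulation of $\Sp^{m-2}$, which the paper's inductive step takes for granted. Moreover, it proves the stronger statement that any triangulated $(n-1)$-sphere with $2n$ vertices admitting a free simplicial involution is octahedral. Your diagnosis of Lemma~\ref{lem3} is the right way to make that step rigorous: what is really used is that an arc carried to itself with endpoints swapped contains a fixed point, which the free antipodal action forbids. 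The paper's approach yields nothing extra for the theorem itself. The coordinate description it carries along, later used in Corollary~\ref{8vertex}, comes from Lemma~\ref{twononzero}, which combines equally well with your argument.
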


\begin{proof}
We prove the result by applying induction on $n$.  First we assume that $n=2$. Then $X$ has four vertices with two pairs of antipodal vertices. Since there cannot be an edge between antipodal vertices and it must be a cycle, the result follows. 

Let the result be true for $n=1,2,\dots,m-1$. Let $X$ be a $\ZZ^{m}_2$-equivariant triangulation of $\Sp^{m-1}$ with $2m$ vertices and $m\geq 3$. From Lemma~\ref{twononzero}, it follows that for any $x \in V(X)$, the vector $x$ has at most two non-zero coordinates.

Let $\{\{u^1, v^1\}, \ldots, \{u^k, v^k\}\}$ be the collection of antipodal pairs of vertices in $V(X)$ each having exactly one non-zero coordinate. Then there must be $2m - 2k$ vertices in $V(X)$ having exactly two non-zero coordinates. Let there exist a vertex $u^1 \in X$ such that $u_i^1 \neq 0$ for some $i \in \{1,2,\ldots,n\}$ and $u_j^1 = 0$ for all $j \neq i$. Define $v^1 = -u^1$. Then $v^1 \in V(X)$ with $v_i^1 = -u_i^1 \neq 0$ and $v_j^1 = 0$ for all $j \neq i$.

We claim that $\lk(u^1, X) = \lk(v^1, X)$.

Note that $u^1 v^1 \notin X$. Let $\sigma$ be a $k$-simplex in $\lk(u^1, X)$ for some $k \geq 1$. Choose a group element $\mu \in \ZZ_2^n$ such that $\mu_i = -1$ and $\mu_j = 1$ for all $j \neq i$. Then $\mu(\sigma \star u^1) = \sigma \star v^1$. Hence $\sigma$ is a $k$-simplex in $\lk(v^1, X)$, and thus $\lk(u^1, X) \subseteq \lk(v^1, X)$. Similarly, $\lk(v^1, X) \subseteq \lk(u^1, X)$. Therefore, $\lk(u^1, X) = \lk(v^1, X)$, proving our claim. From Proposition \ref{suspension}, $X=\lk(u^1)\star\{u^1,v^1\}$ where $\lk(u^1,X)$ is a $\ZZ^{m-1}_2$-equivariant triangulation of $\Sp^{m-2}$. Thus, from the induction hypothesis, $\lk(u^1,X)$ is an octahedral $(m-2)$-sphere and therefore, $X$ is an octahedral $(m-1)$-sphere.

Now, we assume that all the vertices of $X$
has exactly two non-zero coordinates.
Now, let $x^1 \in V(X)$ be a vertex having exactly two non-zero coordinates. Suppose that $x_i^1, x_j^1 \neq 0$ for some $i, j \in \{1, 2, \ldots, m\}$ and $x_l^1 = 0$ for all $l \neq i, j$. Let $x^2 = -x^1$ is the antipodal vertex in $X$. Define $x^3, x^4$ such that $x_i^3 = x_i^1$, $x_j^3 = -x_j^1$, and $x_l^3 = 0$, and $x_i^4 = -x_i^1$, $x_j^4 = x_j^1$, and $x_l^4 = 0$ for all $l \neq i,j$. Then $x^3, x^4 \in V(X)$ and they form an antipodal pair. The vertex set $V(X) \setminus \{x^1, x^2, x^3, x^4\}$ contains exactly $m - 2$ pairs of antipodal vertices, and a maximal simplex in $\lk(x^1, X)$ contains $m  - 1$ vertices. Therefore, $x^1$ must be joined with either $x^3$ or $x^4$ in $X$. Suppose that $x^1 x^3 \in X$.

We claim that $x^1 x^4 \in X$. Let $\gamma$ be a maximal simplex in $\lk(x^1, X)$ containing $x^3$. Then the $(m - 3)$-simplex $\gamma \setminus \{x^3\}$ consists of exactly $m - 2$ vertices, each corresponding to one of the remaining antipodal pairs in $V(X) \setminus \{x^1, x^2, x^3, x^4\}$. Since $\gamma \setminus \{x^3\}$ must be joined with two vertices in $\lk(x^1, X)$, the only possibilities are $x^3$ and $x^4$. Hence, $x^1 x^4 \in X$. This proves the claim. Similarly, $x^2 x^3, x^2 x^4 \in X$.

We consider the group element $\mu' \in \ZZ_2^n$ such that $\mu'_i = \mu'_j = -1$ and $\mu'_l = 1$ for all $l \neq i, j$. 

Let $\beta$ be a maximal simplex in $\lk(x^1, X)$. Then $\beta$ must contain either $x^3$ or $x^4$. Suppose that $\beta$ contains $x^3$, so $\beta = \beta' \star x^3$ for some simplex $\beta'$ in $\lk(x^1, X)$. Since $\beta'$ must also be joined with $x^4$ in $\lk(x^1, X)$, we have $\mu'(\beta' \star x^4 \star x^1) = \beta' \star x^3 \star x^2$. Thus, $\beta \in \lk(x^2, X)$. Similarly, every maximal simplex $\alpha$ containing $x^4$ in $\lk(x^1, X)$ also lies in $\lk(x^2, X)$. Hence, $\lk(x^1, X) \subseteq \lk(x^2, X)$. By symmetry, $\lk(x^2, X) \subseteq \lk(x^1, X)$, so $\lk(x^1, X) = \lk(x^2, X)$.
Therefore, from Proposition \ref{suspension}, $X=\lk(x^1,X)\star\{x^1,x^2\}$ where $\lk(x^1,X)$ is a $\ZZ^{m-1}_2$-equivariant triangulation of $\Sp^{m-2}$ with exactly $2m-2$ vertices. Thus, from the induction hypothesis, $\lk(x^1,X)$ is an octahedral $(m-2)$-sphere. Therefore, $X$ is an octahedral $(m-1)$-sphere. 
\end{proof}

\begin{corollary}\label{8vertex}
Let $X$ be a $\mathbb{Z}_2^4$-equivariant triangulation of $\mathbb{S}^3$ with $8$ vertices. 
Then the only possibility for the vertex set $V(X)$ is (up to permutation of coordinates) one of the following for some fixed $a,b,c,d \in\mathbb{R}\setminus\{0\}$.
$$
\begin{aligned}
&(I)\quad  V(X) = \{(\pm a,\pm b,0,0),\ (0,0,\pm c,\pm d)\} \subset \mathbb{S}^3.\\
&(II)\quad V(X) = \{(\pm a,\pm b,0,0),\ (0,0,\pm 1,0),\ (0,0,0,\pm 1)\} \subset \mathbb{S}^3.\\
&(III)\quad V(X) = \{(\pm 1,0,0,0),\ (0,\pm 1,0,0),\ (0,0,\pm 1,0),\ (0,0,0,\pm 1)\}.
\end{aligned}
$$
\end{corollary}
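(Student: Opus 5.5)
This corollary is the case $n=4$ of Lemma~\ref{twononzero}; the plan is to make the partition count there explicit. The triangulation has $\mathrm{Card}(V(X))=2n=8$, so Lemma~\ref{twononzero} applies and every vertex $x\in V(X)$ has at most two non-zero coordinates. The first step is to sort the vertices by their support $\{i : x_i\neq 0\}$. The support is invariant under the $\ZZ_2^4$-action, so $V(X)$ is a disjoint union of orbits.

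An orbit of a vertex with support $\{i\}$ is an antipodal pair $\{\pm t e_i\}$. Since $|X|=\Sp^3$, we must have $t=1$, so the pair is $\{\pm e_i\}$ with two elements. An orbit of a vertex with support $\{i,j\}$ has four elements $(\pm a,\pm b)$ in the coordinates $i,j$, with $a^2+b^2=1$.

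The second step uses the first part of the proof of Lemma~\ref{twononzero}: every coordinate $i\in\{1,\dots,4\}$ is non-zero on some vertex. Otherwise $|X|$ would lie in a hyperplane, which is impossible.

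The third step is a counting argument. The supports of the orbits together cover $\{1,2,3,4\}$, and the orbit sizes sum to $8$. Each singleton orbit covers one coordinate with $2$ vertices, and each two-element-support orbit covers two coordinates with $4$ vertices. So every orbit spends exactly two vertices per coordinate it covers. Covering four coordinates therefore already costs $8$ vertices, which forces the supports of the orbits to be pairwise disjoint and to partition $\{1,2,3,4\}$.

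The fourth step lists the possibilities. The supports partition $\{1,2,3,4\}$ into blocks of size $1$ or $2$, so the block sizes are $2+2$, $2+1+1$, or $1+1+1+1$. These are the $\lfloor 4/2\rfloor+1=3$ choices predicted by Lemma~\ref{twononzero}. After permuting coordinates they give exactly the vertex sets (I), (II) and (III).

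The only delicate point is the disjointness argument in the third step. Without it one could imagine overlapping supports, for example $\{1,2\}$ and $\{2,3\}$. Such a configuration is ruled out by the count: two four-element orbits already use all $8$ vertices but cover only three coordinates, which contradicts the second step.
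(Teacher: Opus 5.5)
Your argument is correct and is essentially the paper's route. The paper derives the corollary from Lemma~\ref{twononzero}, which gives at most two non-zero coordinates and counts the choices as partitions of $2n$ into $2$'s and $4$'s, together with Theorem~\ref{2nvertexsphere}. Your ``two vertices per covered coordinate'' count supplies the disjointness of supports that the paper's partition count only asserts, and you are right that Theorem~\ref{2nvertexsphere} is not needed for a statement about the vertex set alone.
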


\begin{proof}
The proof follows from Lemma \ref{twononzero} and Theorem \ref{2nvertexsphere}.
\end{proof}

\begin{remark}
    Let $X$ be a $\mathbb{Z}_2^n$–equivariant triangulation of $\mathbb{S}^{n-1}$. Then the suspension of $X$ is a $\mathbb{Z}_2^{n+1}$–equivariant triangulation of $\mathbb{S}^{n}$. This  establishes a method to construct a $\mathbb{Z}_2^{n+1}$–equivariant triangulation of $\mathbb{S}^n$ from a given $\mathbb{Z}_2^{\,n}$–equivariant triangulation of $\mathbb{S}^{n-1}$.
\end{remark}

\begin{example}\label{trid3}
\begin{figure}[ht]
\centerline{
\scalebox{.80}{
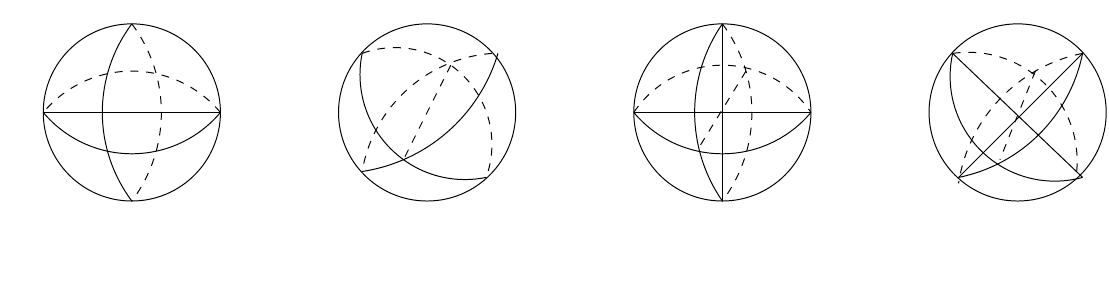
 }
 }
\caption {$\ZZ_2^3$-equivariant triangulations of $D^3$ with $6$ and $7$ vertices.}
\label{egch201a}
\end{figure}
We consider the $3$-dimensional disc $$\mathbb{D}^3=\{(x_1, x_2, x_3) \in \RR^3 : x_1^2 + x_2^2+x_3^2 \leq 1\}.$$ 
We discuss all the $\mathbb{Z}_2^3$-equivariant triangulations of $\mathbb{D}^3$ with vertices $\leq 7$. Let $\mu_1, \mu_2, \mu_3$ be the standard generators of $\mathbb{Z}_2^3$. That is, $\mu_i$ acts on $\mathbb{D}^3$ by reflection along the plane $\{x_i = 0\}$ for $i = 1, 2, 3$. Let $Y$ be a $\mathbb{Z}_2^3$-equivariant triangulation of $\mathbb{D}^3$.  
Then $\partial Y$ is a $\mathbb{Z}_2^3$-equivariant triangulation of $\Sp^2$.  
So $Y$ contains at least $6$ vertices by Lemma \ref{lem3}. 

Let $Y$ contain $6$ vertices.  
Then $(0,0,0)$ cannot be a vertex of $Y$.  
Let $u_1u_2u_3u_4 \subset D^3$ be a $3$-simplex in $Y$ such that $(0,0,0) \in |u_1u_2u_3u_4|$.  
Since $Y$ is equivariant, $(0,0,0)$ cannot belong to the interior of a face $F$ of $u_1u_2u_3u_4$ with $\dim(F) \geq 2$.  Thus $(0,0,0)$ belongs to the interior of some edge, say $u_1u_2$. Then $\mu_i(u_1u_2)$ is an edge in $Y$ for $i = 1, 2, 3$. If $u_1,u_2$ do not lie on axes, then $\mu_i(u_1u_2)$ will be different edge for $1\leq i\leq 3$ and  $\mu_i(u_1u_2)\cap u_1u_2=(0,0,0)$. This implies that $(0,0,0)$ is a vertex in $Y$, which is a contradiction.
Therefore $u_1, u_2$ must belong to the axis $\{x_i\}$ for some $i \in \{1,2,3\}$.  Since $Y$ is $\mathbb{Z}_2^3$-equivariant, if $u_1, u_2$ belong to the axis $x_i$, then $u_3, u_4 \in Y \cap \{x_i = 0\}$.  

Let $Y$ contain $7$ vertices.  
Then $(0,0,0) \in V(Y)$ and all other vertices belong to $\partial Y$.  
Let $u = (a,b,c) \neq (0,0,0)$ be a vertex of $Y$.  It is clear to observe that at least one $a,b$, and $c$ must be zero.

Observing these facts, we present all possible $\mathbb{Z}_2^3$-equivariant triangulations of $\mathbb{D}^3$ with $6$ and $7$ vertices in Figure $3$.  
We denote these triangulations by $D^3_6$, $D^{3}_{6^{\prime}}$, $D^3_7$ and $D^{3}_{7^{\prime}}$, respectively.  

\end{example}

\section{Equivariant triangulation of a class of small covers}\label{equitri}
In this section, we first prove that any equivariant triangulation of a small cover induces a triangulation on the corresponding orbit space. Then, we establish some useful lemmas for the study of equivariant triangulations of $\RP^n$. Finally, we present an example constructing an equivariant triangulation of $\RP^3$ with $11$ vertices and prove that it is the unique minimal equivariant triangulation on $11$ vertices.

\begin{lemma}\label{lem2}
Let $\sigma$ be an $n$-simplex in $\RR^{n+1}$ and $\mu$ be a reflection on the hyperplane $H$ in $\RR^{n+1}$
such that $\mu(\sigma)=\sigma$. Then $H$ contains exactly $n-1$ vertices of $\sigma$. Moreover, there
exists two $n$-simplices $\sigma_1$ and $\sigma_1^{\prime}$ such that $\sigma = \sigma_1 \cup \sigma^{\prime}_1$ and $\mu(\sigma_1)=\sigma_1^{\prime}$.
\end{lemma}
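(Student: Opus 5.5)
The plan is to study how $\mu$ permutes the vertices of $\sigma$ and to use the fact that a reflection has a $(-1)$-eigenspace of dimension exactly one. Since $\mu$ is an affine map with $\mu(\sigma)=\sigma$, it maps vertices of $\sigma$ to vertices, because vertices are the extreme points. So $\mu$ induces an involution $\pi$ on $V(\sigma)=\{v_0,\ldots,v_n\}$. A vertex lies in $H$ exactly when it is fixed by $\mu$. Hence the vertices of $\sigma$ split into $\pi$-fixed vertices, which lie in $H$, and $s$ transposed pairs $\{u_r,u_r'\}$ with $\mu(u_r)=u_r'$ for $r=1,\ldots,s$. The goal is to show $s=1$.

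I will implicitly assume, as the intended statement requires, that $\mu$ does not act trivially on $\sigma$, i.e.\ $\sigma\not\subset H$. Otherwise all $n+1$ vertices lie in $H$, and this degenerate case has to be excluded. Under this assumption $s\ge 1$.

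For the upper bound $s\le 1$, write $\mu(x)=x-2\langle x-h,\nu\rangle\nu$, where $h\in H$ and $\nu$ is a unit normal to $H$. For each transposed pair, $u_r-u_r'=2\langle u_r-h,\nu\rangle\nu$ is a nonzero multiple of $\nu$. If $s\ge 2$, then $u_1-u_1'$ and $u_2-u_2'$ are parallel. But these are differences of four distinct vertices of a simplex, which are affinely independent, so these two vectors must be linearly independent. This contradiction gives $s=1$. Therefore exactly $n+1-2=n-1$ vertices are fixed, and these are precisely the vertices lying in $H$. The main care needed is the affine independence argument and the degenerate case noted above; the rest is routine.

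For the decomposition, let $u,u'$ be the swapped pair and $w_1,\ldots,w_{n-1}$ the fixed vertices. Let $m=\tfrac12(u+u')$, which lies in $H$ because it is the midpoint of a segment that $\mu$ reverses. Set $\sigma_1=\mathrm{conv}(w_1,\ldots,w_{n-1},u,m)$ and $\sigma_1'=\mathrm{conv}(w_1,\ldots,w_{n-1},u',m)$. Both are $n$-simplices, since replacing $u'$ by $m$ (respectively $u$ by $m$) keeps the points affinely independent. This is the standard stellar subdivision of the edge $uu'$ at $m$, so $\sigma=\sigma_1\cup\sigma_1'$. To see this directly, take a point with barycentric coordinates $t_u,t_{u'}$ on $u,u'$: if $t_u\ge t_{u'}$ it lies in $\sigma_1$, and otherwise it lies in $\sigma_1'$. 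Finally, $\mu$ fixes each $w_i$ and $m$ and sends $u$ to $u'$, so $\mu(\sigma_1)=\sigma_1'$.
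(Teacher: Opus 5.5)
Your proof is correct and follows essentially the paper's route. The paper also excludes two swapped pairs: an edge $u_{i_1}u_{i_l}$ on one side of $H$ together with its mirror image would span only a $2$-dimensional affine subspace, which is your parallel-differences argument. It then takes $\sigma_1,\sigma_1'$ to be the closures of $\sigma\cap H_{>}$ and $\sigma\cap H_{<}$, which are exactly your $\mathrm{conv}(w_1,\ldots,w_{n-1},u,m)$ and $\mathrm{conv}(w_1,\ldots,w_{n-1},u',m)$. Your explicit exclusion of the degenerate case $\sigma\subset H$ is warranted, since the paper's claim that $H$ cannot contain more than $n-1$ vertices fails there.
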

\begin{proof}
Without loss of generality, we assume that $\sigma$ is the standard $n$-simplex.
Since $\sigma$ is an $n$-simplex, $\mu(\sigma)=\sigma$ and $H$ is a hyperplane in $\RR^{n+1}$, $H$ cannot contain more than $n-1$ vertices. Thus, the result follows when $n=1,2$ (see Figure \ref{rpn206}). 
\begin{figure}[ht]
\centerline{
\scalebox{0.88}{
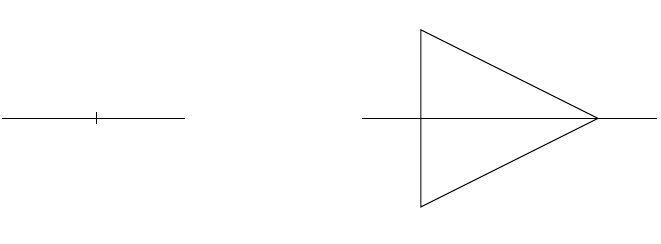
 }  }
\caption {Reflections along a hyperplane.}
\label{rpn206}
\end{figure}

Let $n > 2$ and $\sigma=u_1\cdots u_{n+1}$. Suppose that $H$ contains less than $n-1$ vertices of $\sigma$. Without loss of generality, we assume that $H$ does not contain the vertices $\{u_1, \ldots, u_k\}$ of $\sigma$. Then $k \geq 3$.
 Since $\mu(u_i) \neq u_i$ for all $i = 1, \ldots, k$, $k$ is even and it follows that $k \geq 4$. Let $H_{>}$
and $H_{<}$ be the open half spaces of $H$ in $\RR^{n+1}$. Then half of the vertices of $\{u_1, \ldots,
u_k\}$ belong to $H_{>}$. Let the vertices $u_{i_1}, \ldots, u_{i_l}$ of $\sigma$ belong to $H_{>}$ for some $l\geq 2$. Since $u_{i_1}u_{i_l}$ is an edge in $\sigma$, and $\mu$ is a reflection, $\mu(u_{i_1}u_{i_l})$ is an edge of $\sigma$ in $H_{<}$.
Thus, $u_{i_1}u_{i_l}$ and $\mu(u_{i_1}u_{i_l})$ form a $2$-dimensional affine subspace. This contradicts the fact that $\sigma$ is an $n$-simplex. This proves that $H$ contains exactly $n-1$ vertices of $\sigma$.

Let $\tau =\sigma \cap H_{>}, ~ \tau^{\prime} = \sigma \cap H_{<}, ~ \sigma_1= \bar{\tau} ~\mbox{and} ~ \sigma_1^{\prime}= \bar{\tau^{\prime}}$, where $\bar{\tau}$ and $\bar{\tau'}$ represent the closure of $\tau$ and $\tau'$, respectively. Then $\sigma_1$ and $\sigma_1^{\prime}$ are
$n$-dimensional simplices in $\RR^{n+1}$ with $\sigma= \sigma_1 \cup \sigma_1^{\prime}$ and $\mu(\sigma_1)
=\sigma_1^{\prime}$.
\end{proof}

\begin{lemma}\label{lem_simp_ref}
Let $\Delta^k \subset \mathbb{R}^{m}$ be a $k$-simplex such that the origin belongs to the interior of $\Delta^k$ and $\mu_i(\Delta^k)=\Delta^k$ for each $i$-th reflection $
\mu_i$ on $x_i=0$. Then $k=1$, and $|\Delta^k|$ is a subset of an axis. 
\end{lemma}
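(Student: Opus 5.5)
The approach is to study the vertex set $V$ of $\Delta^k$ rather than the whole simplex. Each reflection $\mu_i$ is an affine map sending $\Delta^k$ onto itself, so it permutes the vertices. Hence $V$ is invariant under the whole group $\ZZ_2^m$ generated by $\mu_1,\ldots,\mu_m$. The vertices of a simplex are affinely independent, so the plan is to show that any richer orbit structure inside $V$ would produce a nontrivial affine dependence.

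First, I dispose of the origin. Since the origin lies in the (relative) interior of $\Delta^k$ and $k\geq 1$ (a $0$-simplex has no interior in the relevant sense), the origin is not a vertex. So every $v\in V$ has nonempty support $S(v)=\{i : v_i\neq 0\}$, and its orbit consists of the $2^{|S(v)|}$ sign changes of $v$ on $S(v)$, all of which lie in $V$.

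Second, I show every vertex lies on a coordinate axis. Suppose $|S(v)|\geq 2$ and pick distinct $i,j\in S(v)$. Then $v,\ \mu_i v,\ \mu_j v,\ \mu_i\mu_j v$ are four distinct vertices. Moreover
$$v+\mu_i\mu_j v=\mu_i v+\mu_j v,$$
since both sides equal the vector with the $i$-th and $j$-th coordinates zero and the remaining coordinates equal to $2v_l$. This is a nontrivial affine relation with coefficients $1,1,-1,-1$ summing to zero, contradicting affine independence. This is the same sign-flipping mechanism as in Lemma \ref{missingedges}. Hence $|S(v)|=1$ for every vertex, and $V$ is a union of antipodal pairs $\{w,-w\}$ with each $w$ on an axis.

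Finally, I count the pairs. If $V$ contained two antipodal pairs $\{w,-w\}$ and $\{u,-u\}$, then $w+(-w)=u+(-u)$ is again a nontrivial affine dependence among four distinct vertices, which is impossible. So $V$ is a single pair $\{w,-w\}$ on one axis. Therefore $k+1=2$, i.e.\ $k=1$, and $|\Delta^k|$ is the segment from $w$ to $-w$, which lies in that axis.

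The main obstacle is the first step: justifying that the origin cannot be a vertex. This requires reading "interior" as relative interior with $k\geq 1$. Once that is settled, the affine-dependence identities make the rest routine.
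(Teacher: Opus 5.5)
Your proof is correct and is essentially the paper's argument. The paper also uses that the vertex set is closed under sign changes. It rules out $k>1$ because two antipodal edges $v(-v)$ and $u(-u)$ would both pass through the origin, which is your relation $w+(-w)=u+(-u)$. It rules out vertices with $\ell\geq 2$ nonzero coordinates because their orbit spans a cube that cannot be a face of a simplex, which is your relation $v+\mu_i\mu_j v=\mu_i v+\mu_j v$. Your affine-dependence phrasing makes these two steps precise, and, like the paper, you rely on the origin not being a vertex, i.e.\ on $k\geq 1$.
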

\begin{proof}
If $v \in V(\Delta^k)$, then $-v \in V(\Delta^k)$, and $v(-v)$ is an edge of $\Delta^k$. Suppose that $k>1$. Then, there are two edges of $\Delta^k$ of the form $v(-v)$ intersecting at the origin. So, the origin is a vertex of $\Delta^k$, a contradiction. Hence, $\Delta^k=v(-v)$ for some $v\in \RR^m$. 

Suppose that  $v$ has $\ell >1$ non-zero coordinates. Then, the convex hull of $\{\mu_i(v): i=1, \ldots, \ell\}$ is an $\ell$-cube. So, it cannot be a face of $\Delta$. Thus, each vertex of $\Delta$ can have only one non-zero coordinate, as the origin is not a vertex of $\Delta^k$. Therefore,  $\Delta^k=v(-v)$ is a subset of an eaxis.
\end{proof}

\begin{theorem}\label{thm2}
Let $M$ be an $n$-dimensional small cover.
Let $X$ be a $\ZZ_2^n$-equivariant triangulation of $M$ and $\pi \colon |X|\rightarrow P$ is the orbit map where $P$ is a simple polytope. Then the collection
$\{\pi(\sigma) \colon \sigma \in X ~ \mbox{and}~ \dim{\sigma} = n\}$ gives a triangulation of the polytope $P$.
\end{theorem}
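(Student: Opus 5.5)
The plan is to show two things. First, for every top simplex $\sigma\in X$ the orbit map $\pi$ restricted to $\sigma$ is injective, so $\pi(\sigma)$ is an $n$-simplex in $P$. Second, the images $\pi(\sigma)$ cover $P$ and meet face-to-face. The key input is local standardness: near any point, $\ZZ_2^n$ acts as the standard reflection group on $\RR^n$, and the isotropy group of a point $y$ is generated by the reflections in the codimension one submanifolds $\pi^{-1}(F)$ through $y$, one for each facet $F$ of $P$ containing $\pi(y)$.

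Step 1 handles a single simplex. Let $\sigma$ be an $n$-simplex of $X$. Suppose $\pi(x)=\pi(y)$ for distinct $x,y\in\sigma$, so $y=g(x)$ with $g\neq 1$. Equivariance gives $g(\sigma)\in X$, and $g(\sigma)\cap\sigma$ contains $y$, so it is a common face $\tau$ of both. I will show that every point of $\sigma$ that is identified with another point of $\sigma$ lies on a face of $\sigma$ fixed pointwise by the relevant group element.

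The main obstacle, as I see it, is ruling out $g(\sigma)=\sigma$ with $g$ acting nontrivially. For this I use the local linear picture. Take a point $z$ in the interior of $\sigma$. Its isotropy group $G_z$ preserves $\sigma$, since it fixes $z$, and $z$ lies in the interior of the unique simplex containing it. The fixed set of a nontrivial isotropy element is codimension at least one, and in the chart it is a union of coordinate hyperplanes. If $G_z$ were nontrivial, a reflection in it would map $\sigma$ to itself while fixing an interior point. By the argument of Lemma~\ref{lem2}, transported through the locally standard chart and using that $X$ is a genuine simplicial complex, that reflection would split $\sigma$ into two pieces exchanged by it. This contradicts the simplex being a cell of the complex on which the action is simplicial: a simplicial map fixing $\sigma$ permutes its vertices, and its fixed set is then the convex hull of barycentres of vertex orbits. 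That fixed set meets the interior, so it must be codimension one and contain the barycentre. Together with Lemma~\ref{lem_simp_ref}-type arguments on vertex orbits, this forces two vertices to be swapped with the midpoint of their edge fixed, which is incompatible with a reflection whose local fixed set separates. I expect to refine this into the statement that every $g\neq 1$ fixing $\sigma$ setwise fixes it pointwise, hence is trivial by effectiveness and the fact that the fixed set has codimension at least one. So $g(\sigma)\neq\sigma$ for $g\neq1$, and interiors of distinct translates are disjoint. It follows that identified points of $\sigma$ lie in $\tau=\sigma\cap g(\sigma)$, and I would argue that $g$ fixes $\tau$ pointwise, because $g$ permutes the vertices of $\tau$ and the same argument applies one dimension down. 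Hence $\pi|_\sigma$ is injective.

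Step 2 assembles the triangulation. Because $\pi$ is a closed surjection and $X$ covers $M$, the images $\pi(\sigma)$ cover $P$. If the images of two top simplices have overlapping interiors, then some $g(\sigma)$ and $\sigma'$ have overlapping interiors, so $\sigma'=g(\sigma)$ and the two images coincide. Finally, $\pi(\sigma)\cap\pi(\sigma')$ equals $\pi\bigl(\bigcup_g g(\sigma)\cap\sigma'\bigr)$, a union of images of faces. Using injectivity from Step 1 and pointwise-fixedness of intersections, this union collapses to the image of a single common face, so the intersection is a face of both images. The images are curvilinear simplices in $P$; to regard them as a triangulation I identify each with its linear simplex via $\pi|_\sigma$, which yields a simplicial complex homeomorphic to $P$.
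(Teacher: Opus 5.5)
There is a genuine gap, and it sits exactly where you located the main obstacle. The claim in Step 1 that no $g\neq 1$ can map a top simplex to itself, so that $\pi|_\sigma$ is injective, is false. No refinement of your sketch can prove it.

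The paper's own $11$-vertex triangulation $K''$ of $\mathbb{RP}^3$ in Example~\ref{rp3e} is a counterexample. It contains the $3$-simplex $57ac$. The element $\mu_3$ fixes $5$ and $7$ and swaps $a\leftrightarrow c$, so $\mu_3(57ac)=57ac$. The element $\mu_1\mu_2$, which swaps $5\leftrightarrow 7$ and $a\leftrightarrow c$, also stabilizes it.

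More generally, the midpoint of $ac$ is fixed by the whole group but is not a vertex of $K''$. Each of the four top simplices $57ac,58ac,67ac,68ac$ around it is stabilized by $\mu_3$. The fixed set of $\mu_3$ in $57ac$ is the triangle spanned by $5$, $7$ and the midpoint of $ac$. It passes through the interior and separates the simplex into two halves interchanged by $\mu_3$.

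That is precisely the configuration you declared ``incompatible with a reflection whose local fixed set separates.'' It is entirely compatible. Lemma~\ref{lem2} describes this situation rather than excluding it, and Example~\ref{trid3} shows the same phenomenon for $\mathbb{D}^3$. So you have inverted the role of Lemma~\ref{lem2}.

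As a consequence, $\pi|_\sigma$ is in general not injective; on $57ac$ it is $4$-to-$1$ at generic points. Everything downstream that relies on injectivity falls with it. That includes the face-to-face argument in Step 2 and the final identification of $\pi(\sigma)$ with $\sigma$ via $\pi|_\sigma$.

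The missing idea is the one the paper's proof is built on:
\begin{enumerate}
\item When a facet reflection $\xi_i$ stabilizes a simplex, Lemma~\ref{lem2} splits it along the reflecting hyperplane into two simplices interchanged by $\xi_i$. That hyperplane contains exactly $n-1$ of the vertices and the midpoint of the edge joining the other two.
\item Keep the half lying on the side of the fundamental domain $P_{\mathbf 0}$.
\item Iterate over the facets of $P$ meeting the interior of the current piece, until the piece $\tau_k$ lies in $P_{\mathbf 0}$.
\end{enumerate}
Then $\pi(\sigma)=\pi(\tau_k)=|\tau_k|$.

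The resulting simplices in $P$ have vertices that are not images of vertices of $X$. For $57ac$, cutting by $\xi_3=\mu_3$ and then by $\xi_0=\mu_1\mu_2\mu_3$ gives the tetrahedron on the images of $5$, $a$, and the midpoints of $57$ and $ac$. So the induced triangulation of $P$ cannot be obtained as $X$ pushed down by an injective $\pi|_\sigma$. Your Step 2 has to be redone for these subdivided pieces.
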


\begin{proof}
Let $M$ be the $n$-dimensional small cover and $X$ be a $\ZZ_2^n$-equivariant triangulation of $M$. Then the orbit space of $|X|$ is a polytope $P$. Thus, we have the following diagram.
\begin{center}
  
$
\begin{tikzcd}[row sep=large, column sep=large]
  & {|X|}\arrow[dr, "\pi"] \arrow[dl, "\cong"'] & \\
  M=(\mathbb{Z}_2^n \times P)/{\sim} \arrow[rr, "\pi"'] & & P
\end{tikzcd}
$
\end{center}
Let $\mu \in \ZZ_2^n$. Define
$P_{\mu} := \{[\mu, p]_{\sim} \in |X| : p \in P\}$.
Then $P_{\mu} \cong P$ as manifold with corners. Let $\sigma$ be a simplex in $X$. Since $X$ is a $\ZZ_2^n$–equivariant triangulation of $M$, there exists $\mu \in \ZZ_2^n$ such that $|\sigma| \cap P_{\mu} \neq \emptyset$. This implies that
$|\mu^{-1}(|\sigma|)| \cap P_{\bf 0} \neq \emptyset$ where ${\bf 0}\in \ZZ_2^n$ is the identity element. For each $p \in P_{\bf 0}$, there exists a unique simplex $\tau_p \in X$ such that the relative interior of $|\tau_p|$ contains $p$. Define 
$X_{\bf 0} = \{\tau_p : p \in P_{\bf 0} \neq \emptyset\}$.
Observe that

$\bigcup_{\mu \in \ZZ_2^n} \mu(X_{\bf 0}) = \{\mu(\tau) : \tau \in X_{\bf 0}\} = X$, and $P_{\bf 0} \subseteq |X_{\bf 0}|$. 

\noindent\textbf{Claim.} If $\tau \in X_{\bf 0}$ is a simplex, then $|\tau| \cap P_{\bf 0}$ is also a simplex.

If $|\tau| \subset P_{\bf 0}$, then done. Otherwise, there are some points in the relative interior of $|\tau|$ which do not belong to $P_{\bf 0}$. Let $F_{i_1}$ be a facet of $P_{\bf 0}$ such that $F_{i_1}$ contains some interior point of $|\tau|$. Then, $\xi_{i_1}(\tau)=\tau$. 
Then, by Lemma~\ref{lem2}, there exist simplices $\tau_1$ and $\tau_1'$ such that $\xi_{i_1}(\tau_1) = \tau_1'$, $|\tau| = |\tau_1| \cup |\tau_1'|$, and $\pi(|\tau|) = \pi(|\tau_1|)$.

Now, if  $|\tau_1| \subset P_{\bf 0}$, then done. Otherwise, there are some points in the relative interior of $|\tau_1|$ which do not belong to $P_{\bf 0}$. Let $F_{i_2}$ be a facet of $P_{\bf 0}$ such that $F_{i_2}$ contains some interior point of $|\tau_1|$.
Then, $\xi_{i_2}(\tau_1)=\tau_1$. Then, by Lemma~\ref{lem2} there exist simplices $\tau_2$ and $\tau_2'$ such that $\xi_{i_2}(\tau_2) = \tau_2'$, $|\tau_1| = |\tau_2| \cup |\tau_2'|$, and $\pi(|\tau|) = \pi(|\tau_1|)=\pi(|\tau_2|)$.
Proceeding in this way, after a finitely many steps, we obtain simplices $\tau, \tau_1, \ldots, \tau_k$ such that
$\pi(|\tau|) = \pi(|\tau_1|) = \cdots = \pi(|\tau_k|) = |\tau_k|$.

Next, consider the collection
$\{\tau_k \text{ and all its faces} : |\tau_k| = \pi(|\tau|) ~\mbox{ for some } \tau \in X_{\bf 0}\}$. Since $\pi$ is the orbit map, 
 $\pi(\sigma) \cap \pi(\sigma') = \pi(\sigma \cap \sigma')$
for any $\sigma,\sigma' \in X$.   Thus, the collection
$\{\tau_k \text{ and all its faces} : |\tau_k| = \pi(|\tau|) ~\mbox{ for some } \tau \in X_{\bf 0}\}$  is a simplicial complex whose geometric carrier is homeomorphic to $P$.
\end{proof}

\begin{lemma}
Let $M$ be an $n$-dimensional small cover, and let $X$ be a $\mathbb{Z}_2^n$-equivariant triangulation of $M$. Let $\pi \colon |X| \rightarrow P$ be the orbit map, where $P$ is a simple polytope. Then $f_0(X) \geq 2n$.
\end{lemma}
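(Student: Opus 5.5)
Choose a vertex $p$ of the simple polytope $P$ and exploit the local structure of the small cover at the fixed point over $p$. Since $P$ is simple, $p$ is the intersection of exactly $n$ facets $F_{i_1},\dots,F_{i_n}$. By local standardness, the point $x=\pi^{-1}(p)\in M$ is fixed by all of $\ZZ_2^n$. The characteristic vectors $\xi_{i_1},\dots,\xi_{i_n}$ form a basis of $\ZZ_2^n$, and $\ZZ_2^n$ acts near $x$ as the standard reflection action on $\RR^n$. The plan is to produce $2n$ distinct vertices of $X$ from the star of $x$, or of the simplex containing $x$ in its interior.

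The first step is to show that $x$ is a vertex of $X$, or else to handle the case where it is not. Let $\tau$ be the unique simplex of $X$ whose relative interior contains $x$. Since $x$ is fixed, $\mu(\tau)=\tau$ for all $\mu\in\ZZ_2^n$. Working in a local chart identified with $\RR^n$ with the standard action, Lemma~\ref{lem_simp_ref} then forces either $\tau=\{x\}$, or $\tau$ to be an edge lying along an axis. The axis case is the main obstacle: an axis is fixed by $n-1$ of the reflections, but the remaining reflection swaps the endpoints. Applying a second generator fixing a different axis yields a different edge through $x$, which gives a contradiction when $n\ge 2$, exactly as in the proof of Lemma~\ref{lem_simp_ref}. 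This argument needs care, since the chart is only topological. To address that, I would instead apply the argument to $\lk(\tau)$ and use its invariance. So we may assume $x$ is a vertex.

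Next, consider $L=\lk(x,X)$. Since $X$ triangulates an $n$-manifold, $L$ is a triangulated $(n-1)$-sphere. It is $\ZZ_2^n$-invariant because $x$ is fixed, and by local standardness the induced action on $|L|$ is equivalent to the standard $\ZZ_2^n$-action on $\Sp^{n-1}$. The plan is to identify $|\st(x)|$ equivariantly with a neighbourhood of $0$ in $\RR^n$ carrying the standard action, with $|L|$ corresponding to a star-shaped sphere around $0$. Then $L$ is a $\ZZ_2^n$-equivariant triangulation of $\Sp^{n-1}$ in the sense used in Section~\ref{sphere}. Lemma~\ref{lem3} gives $\mathrm{Card}(V(L))\ge 2n$.

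Finally, $V(L)\subseteq V(X)$, so $f_0(X)\ge 2n$. In fact $f_0(X)\ge 2n+1$ counting $x$ itself, which is stronger than stated. Besides the axis case, the other delicate point is justifying that Lemma~\ref{lem3}, whose proof uses the linear antipodal map and straight edges, applies to the link. If that transfer is problematic, I would reprove it directly for $L$. The element $\mu=\xi_{i_1}+\cdots+\xi_{i_n}$ acts on $|L|$ as a fixed-point-free involution. So no vertex of a facet $\sigma$ of $L$ is fixed, and $\mu(\sigma)\cap\sigma=\emptyset$, since $\mu$ maps into the opposite orthant. This gives $n+n$ distinct vertices.
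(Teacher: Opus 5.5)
Your overall plan (find a $\ZZ_2^n$-invariant triangulated $(n-1)$-sphere around the fixed point $x$ over a vertex of $P$, then apply Lemma~\ref{lem3}) matches the paper's. However, there is a genuine gap: the step ``so we may assume $x$ is a vertex'' is false, and the rest of your argument uses $\lk(x,X)$ and counts $x$, so it depends on that step.

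Lemma~\ref{lem_simp_ref} does not exclude the case where $x$ lies in the interior of an edge $\tau=v(-v)$ along an axis. That case is exactly what the lemma \emph{concludes}. The contradiction in its proof comes from two distinct edges of the form $v(-v)$ through the origin, which requires $k>1$. For an axis edge there is no second such edge. Every element of $\ZZ_2^n$ maps $\tau$ onto itself: the reflection in that coordinate reverses it, and all other generators fix it pointwise. So ``applying a second generator'' produces nothing new.

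The case genuinely occurs in the paper itself:
\begin{itemize}
\item The $6$-vertex balls $D^3_6$, $D^3_{6'}$ of Example~\ref{trid3} have the origin in the interior of an axis edge.
\item In the $11$-vertex triangulation $K''$ of Example~\ref{rp3e}, the fixed vertices $1,2,3$ of $K'$ are removed by retriangulation. Three of the four fixed points of $\RP^3$ then lie in the interiors of the invariant edges $ac$, $bd$, $gh$, and only $4$ is a fixed vertex (as the proof of Theorem~\ref{thm3} also shows).
\end{itemize}
You also cannot repair this by choosing the vertex $p$ of $P$ cleverly. Take the $6$-vertex $\RP^2$ obtained from the icosahedron with vertices $(0,\pm1,\pm\phi)$, $(\pm1,\pm\phi,0)$, $(\pm\phi,0,\pm1)$. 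It is invariant under the sign changes, which induce the standard $\ZZ_2^2$-action. All three fixed points are edge midpoints; for example, $[1:0:0]$ is the midpoint of the edge joining $[\phi:0:1]$ and $[\phi:0:-1]$. So $\lk(x,X)$ is not available in general, and your bound $f_0(X)\ge 2n+1$ ``counting $x$'' is unsupported.

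The fix is exactly the paper's device. Instead of the link of $x$, use $D_e=\{\sigma\in X : e\in|\sigma|\}$, which equals $\st(\tau,X)$ for the carrier simplex $\tau$ of $x$. Its boundary $\partial D_e=\partial\tau\star\lk(\tau,X)$ is a $\ZZ_2^n$-invariant $(n-1)$-sphere surrounding $x$ whether or not $x$ is a vertex. For an axis edge $\tau=uv$ it is the suspension $\{u,v\}\star\lk(uv,X)$, which directly gives $2+2(n-1)=2n$ vertices. Applying Lemma~\ref{lem3} to $\partial D_e$ yields $f_0(X)\ge 2n$.

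A smaller point concerns your fallback re-proof of Lemma~\ref{lem3}. Local standardness only tells you that $\mu=\xi_{i_1}+\cdots+\xi_{i_n}$ acts as $-\mathrm{id}$ near $x$. The sphere need not lie inside a standard chart; in $\RP^n$, for instance, this $\mu$ equals $\xi_0$ and fixes all of $X_0$. So fixed-point-freeness on the sphere, and the ``opposite orthant'' claim, need a real argument. One option is to use that $\mu$ acts affinely on the simplices of $D_e$, together with Brouwer's theorem on the $\mu$-invariant face $\sigma\cap\mu(\sigma)$.
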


\begin{proof}
Let $e \in P$ be a vertex. Then $\pi^{-1}(e) \subseteq |X|$ is a set of fixed vertices. Define 
$D_e = \{\sigma \in X : e \in |\sigma|\}.$
Then $D_e$ is a triangulated $(n-1)$-ball, and its boundary $\partial D_e$ is a $\mathbb{Z}_2^n$-equivariant triangulation of $\mathbb{S}^{n-1}$. By Lemma~\ref{lem3}, $\partial D_e$ contains at least $2n$ vertices, and therefore $X$ has at least $2n$ vertices.
\end{proof}

\begin{lemma}\label{lem1}
Let $X$ be a $\ZZ_2^n$-equivariant triangulation of $\mathbb{RP}^{n}$. If $e_0, \ldots, e_{n}$ are fixed points of $|X|$, then $\mathrm{Card}(V(X)\setminus \{e_i : i=0, \ldots, n\})$ is even. 
\end{lemma}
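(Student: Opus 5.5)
The plan is an orbit-counting argument: partition $V(X)\setminus\{e_0,\ldots,e_n\}$ into $\ZZ_2^n$-orbits and show that every orbit has even cardinality. Since $X$ is $\ZZ_2^n$-equivariant, the group permutes $V(X)$. The points $e_0,\ldots,e_n$ are the $\ZZ_2^n$-fixed points of $\RP^n$, namely the preimages under $\pi$ of the vertices of $\Delta^n$, so each forms a singleton orbit and the complement $V(X)\setminus\{e_0,\ldots,e_n\}$ is a union of orbits. By the orbit–stabilizer theorem, each orbit has size $2^n/|\mathrm{Stab}(v)|$, which is a power of $2$. The task therefore reduces to showing that no vertex outside $\{e_0,\ldots,e_n\}$ has an orbit of size $1$, i.e.\ no such vertex is fixed by all of $\ZZ_2^n$.

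The key step is to identify the points of $\RP^n$ fixed by the whole group. By local standardness, a point $y$ lying over the relative interior of a face $F$ of $\Delta^n$ has isotropy group generated by the characteristic vectors $\xi_i$ of the facets containing $F$. If $F$ has codimension $k$, this group has rank $k$, so its orbit has size $2^{n-k}$. The isotropy is all of $\ZZ_2^n$ exactly when $k=n$, which means $\pi(y)$ is a vertex of $\Delta^n$.

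Because $\pi$ is a bijection on fixed points, there are exactly $n+1$ fixed points, and under the hypothesis they are $e_0,\ldots,e_n$. Consequently every other vertex $v$ of $X$ satisfies $\pi(v)\in$ the relative interior of a face of codimension $k<n$, and its orbit has size $2^{n-k}\ge 2$, which is even.

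Summing the even orbit sizes over the partition shows that $\mathrm{Card}(V(X)\setminus\{e_i\})$ is even. The only step needing care is the isotropy computation. I would make it explicit using the model $\RP^n=(\ZZ_2^n\times\Delta^n)/\sim$, where $[\mu,p]=[\mu',p]$ iff $\mu-\mu'$ lies in the subgroup spanned by the $\xi_i$ for the facets $F_i\ni p$. From this model the orbit of $[0,p]$ has size equal to the index of that subgroup. The subgroup is proper unless $p$ lies on $n$ facets, since any $n$ of $\xi_0,\ldots,\xi_n$ are linearly independent. This is also the point where $\mathrm{Card}(V(X)\setminus\{e_i\})$ is well defined even if some $e_i$ were not vertices; the hypothesis that they are fixed vertices simply removes exactly these singleton orbits.
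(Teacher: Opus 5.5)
Your proposal is correct and follows the same route as the paper's proof. In both, the $\ZZ_2^n$-orbits partition $V(X)\setminus\{e_0,\ldots,e_n\}$, and each orbit has size $2^n/|\mathrm{isotropy}|$, which is even because only the $n+1$ points over the vertices of $\Delta^n$ are fixed by the whole group. Your explicit isotropy computation in the model $(\ZZ_2^n\times\Delta^n)/\sim$ supplies the step the paper only asserts: the isotropy has rank equal to the codimension $k$ of the carrier face, and $k<n$ away from the vertices of $\Delta^n$. This also corrects the paper's slip of writing the range as $0<k\le n$, which would wrongly allow the full group as isotropy.
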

\begin{proof}
Let $A = V(X)\setminus \{e_i : i= 0, \ldots, n\}$ and $u \in A$. Since $\ZZ_2^n$ action on $\mathbb{RP}^n$
is standard, the isotropy group of $u$ is isomorphic to $\ZZ_2^k$ for some $0 < k \leq n$. Thus, 
the cardinality of the set $\{\mu u : \mu\in \ZZ_2^n\}$ is  even. Since $X$ is a $\ZZ_2^n$-equivariant
triangulation, $\{\mu u : \mu\in \ZZ_2^n ~~\mbox{and}~~ u \in A\} = A$ and the sets $\{\mu u : \mu\in \ZZ_2^n\}$ make a partition of $A$. Hence $A$ contains even number of vertices.
\end{proof}

\begin{lemma}\label{lem4}
Let the notations be same as in Definition 
\ref{Charecteristic}. Let $X$ be a $\ZZ_2^n$-equivariant triangulation of $\mathbb{RP}^n$ and $\sigma$ be an $n$-simplex
in $X$ such that $e_i \in |\sigma|$ for some $i \in \{0, \ldots, n\}$. Then $V(\sigma) \cap
X_i$ is empty.
\end{lemma}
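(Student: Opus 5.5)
The plan is to derive a contradiction from the way the element $\xi_i$ of Definition~\ref{Charecteristic} acts near $e_i$ and along $X_i$. First I would record two facts about $\xi_i$. (a) $\xi_i$ fixes $X_i$ pointwise, since $X_i$ is the codimension-one submanifold fixed by $\langle \xi_i\rangle$. (b) $e_i$ is an \emph{isolated} fixed point of $\xi_i$. For (b), I use local standardness at the vertex $e_i$ of $\Delta^n$: the $n$ facets $F_j$, $j\neq i$, meet at $e_i$, and their characteristic vectors form a basis of $\ZZ_2^n$ in which the action near $e_i$ is the standard reflection action. The sum of these basis vectors is $\xi_i$:
\begin{itemize}
\item for $i=0$ the vectors are $\mu_1,\ldots,\mu_n$, with sum $\xi_0$;
\item for $i\geq 1$ they are $\xi_0$ and $\mu_j$ for $j\neq i$, with sum $\mu_i=\xi_i$.
\end{itemize}
Hence in the standard chart around $e_i$, $\xi_i$ acts as $-\mathrm{Id}$, so $e_i$ is isolated in $\mathrm{Fix}(\xi_i)$. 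Concretely, for $i=0$ this is $x\mapsto -x$ on the interior of $Q_n$.

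Now suppose, for contradiction, that $v\in V(\sigma)\cap X_i$, where $\sigma$ is an $n$-simplex with $e_i\in|\sigma|$. By equivariance, $\xi_i(\sigma)$ is a simplex of $X$. It contains $\xi_i(e_i)=e_i$, and it has $\xi_i(v)=v$ as a vertex. So $\tau:=\sigma\cap\xi_i(\sigma)$ is a common face of $\sigma$ and $\xi_i(\sigma)$ containing both $e_i$ and $v$. Moreover $\xi_i(\tau)=\xi_i(\sigma)\cap\sigma=\tau$, so $\xi_i$ restricts to a map of $\tau$ that permutes its vertices. As in the proofs of Lemma~\ref{lem2} and Lemma~\ref{lem_simp_ref}, I take the action to be affine on simplices. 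Then $\xi_i|_\tau$ is an affine involution of $\tau$, and its fixed set is convex: it is the affine span of the barycenters of the vertex orbits, intersected with $\tau$.

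Since $e_i$ and $v$ both lie in $\mathrm{Fix}(\xi_i|_\tau)$, the whole segment from $e_i$ to $v$ inside $\tau$ consists of fixed points of $\xi_i$. Also $v\neq e_i$, because $e_i\notin X_i$ ($e_i$ maps to the vertex of $\Delta^n$ not on $F_i$). So there are fixed points of $\xi_i$ arbitrarily close to $e_i$ and distinct from it, contradicting the isolation in (b). Therefore $V(\sigma)\cap X_i=\emptyset$.

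The main obstacle is justifying step (b) carefully: that the local linear model of the action at $e_i$ identifies $\xi_i$ with $-\mathrm{Id}$. This requires the characteristic-vector bookkeeping above, together with the fact that the homeomorphism $|X|\cong M$ intertwines the two actions. A secondary point is the implicit assumption that group elements act affinely on simplices. I would state this explicitly, in the same way it is used in Lemmas~\ref{lem2} and~\ref{lem_simp_ref}, since it is what makes the fixed set inside $\tau$ convex.
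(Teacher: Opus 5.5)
Your proof is correct, but it is organized differently from the paper's. The paper splits according to where $e_i$ sits in $\sigma$:
\begin{enumerate}
\item If $e_i$ is a vertex, the edge $e_iu$ has both endpoints fixed by $\xi_i$. So it is fixed pointwise, and $\pi(ue_i)$ would be disconnected.
\item If $e_i$ lies in the relative interior of a face of dimension $k\geq 2$, that face is invariant under all of $\ZZ_2^n$. Lemma~\ref{lem3} then forces its boundary to have at least $2k>k+1$ vertices.
\item If $e_i$ lies inside an edge $u_ku_l$, then $\xi_i$ swaps $u_k$ and $u_l$ while fixing $u$. This yields a fixed segment from $u$ to $e_i$ in the triangle $uu_ku_l$.
\end{enumerate}

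Your choice $\tau=\sigma\cap\xi_i(\sigma)$, combined with convexity of the fixed set of an affine involution, produces the fixed segment $[e_i,v]$ directly. So the three cases collapse into one, and Lemma~\ref{lem3} is not needed at all. That is a gain, since applying it to $\partial\tau$ in the paper tacitly identifies the induced action there with the standard one.

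The two arguments share their key inputs. The paper's ``$\pi(ue_i)$ is disconnected'' step is exactly your isolation statement (b), since $\mathrm{Fix}(\xi_i)=X_i\sqcup\{e_i\}$ and $\pi(X_i)=F_i$ is closed and misses $e_i$. Both also rest on the same implicit hypothesis that group elements act affinely on simplices, which you do well to state explicitly.

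The paper's case analysis does give an extra by-product: the fixed point $e_i$ is always a vertex of $X$ or an interior point of an edge. For the lemma as stated, your route is shorter and more transparent.
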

\begin{proof}
First suppose that $e_i$ is a vertex of $\sigma$ and $V(\sigma) \cap X_i \neq \empty$ with $u \in V(\sigma)
\cap X_i$. Now, we claim that  $e_iu$ and $\xi_i(e_iu)$ are two distinct edges in $X$, where $\xi_i$ is the $\ZZ_2$ characteristic vector (see Definition \ref{Charecteristic}). Suppose $\xi_i(ue_i)=ue_i$. Since $\xi_i$ only fixes the set $X_i\cup\{e_i\}$, $\xi_i(e_i)=e_i$, $\xi_i(u)=u$. This implies that $\xi_i(x)=x$ for all $x\in (u,e_i)$. Thus, $[u,e_i)\subseteq X_i$. Therefore, $\pi(ue_i)$ is a disconnected subset of $\Delta^n$ whereas $ue_i$ is connected in $X$. This is a contradiction. Therefore, $e_iu$ and $\xi_i(e_iu)$ are two distinct edges in $X$ with $e_iu \cap \xi_i(e_iu)
=\{e_i, u\}$ in $X$. This is a contradiction. 

Suppose that $e_i$ is an interior point of a $k$-dimensional face $\tau$ of $\sigma$ with $k >1$. Since $e_i$ is a fixed point, either $\mu(\tau) =\tau$ or $\mu(\tau) \cap \tau$ is a proper face of $\tau$ for any $\mu\in \ZZ_2^n$. Since $e_i$ is an interior point of $\tau$, $\mu(\tau) \cap \tau$ is not a proper face of $\tau$ for any $\mu\in \ZZ_2^n$. So $\mu(\tau) = \tau$ for all $\mu\in \ZZ_2^n$. Then $\partial \tau$ is a $\ZZ_2^n$-equivariant $(k-1)$-dimensional simplicial sphere. Using Lemma \ref{lem3}, we get that $\mathrm{Card}(V(\partial \tau)) \geq 2k > k+1, ~~\mbox{if} ~~ k >1.$  
Hence $e_i$ can not belong to the interior of a face $\tau$ of $\sigma$ with $\mathrm{dim}(\tau) \geq 2$.
Thus, $e_i$ belongs to the interior of an edge, namely $u_ku_l$, of $\sigma$.

If $u_k=u$, then since $e_i$ is an interior point of $uu_l$, $\xi_i(u_l)=u_l$. In this case, $uu_l\setminus e_i \subseteq X_i$. Then $\pi(uu_l)$  is a disconnected subset of $\Delta^n$, which is a contradiction. Thus $u_k\neq u$ and similarly, $u_l\neq u$.  Since $X$ is $\ZZ_2^n$-equivariant and $e_i$ is an interior point of the edge $u_ku_l$, we have two choices: $\xi_i(u_k)
= u_l$ or $\xi_i(u_k)=u_k$. From the arguments given above, $\xi_i(u_k)=u_k$ is not possible. Thus, $\xi_i(u_k)=u_l$.
Also, we have $u_ku_l \subset X_{i_1} \cap \ldots \cap X_{i_{n-1}}$ for some $\{i_1, \ldots, i_{n-1}\} \subset \{0, \ldots, \hat{i}, \ldots, n\}$. This implies that $\xi_i(uu_k)=uu_l$. We have $\xi_i(u_ke_i)=e_iu_l$. Thus, there exists a line segment, say, $L$ between $u$ and $e_i$ such that $\xi_i$ is a reflection along $L$ and $\xi_i(L)=L$. It follows that $\xi_i(ue_i)=ue_i$ with $\xi_i(u)=u$ and $\xi_i(e_i)=e_i$. This implies that $[u,e_i)\subseteq X_i$. Consequently, $\pi(ue_i)=\pi(L)$ is disconnected. This is a contradiction. Hence, $V(\sigma)\cap X_i$ is empty.
\end{proof}

\begin{lemma}\label{lem5}
 Let the notations be same as in Definition \ref{Charecteristic}. Let $X$ be a $\ZZ_2^n$-equivariant triangulation of $\mathbb{RP}^n$ such that the intersection $V(X) \cap \pi^{-1}((e_i,e_j))$ is empty for some $i, j \in \{0, \ldots, n\}$ with $i < j$. Then there is a 2-dimensional face $F \subset \Delta^n$ containing $e_ie_j$ with $V(X) \cap \pi^{-1}(F\setminus V(F))$ is nonempty, when $n>2$.
\end{lemma}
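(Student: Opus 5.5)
We argue by contradiction, assuming that no vertex of $X$ lies over $(e_i,e_j)$ and, for every $2$-face $F=e_ie_je_k$ of $\Delta^n$ with $k\notin\{i,j\}$, that no vertex of $X$ lies over $F\setminus V(F)$. The fixed points $e_i,e_j$ (we use the same name for a vertex of $\Delta^n$ and its unique preimage, a fixed point of $M$) are then joined in $M$ by the circle $C=\pi^{-1}(e_ie_j)$. This circle is the fixed set of the rank-$(n-1)$ subgroup generated by the characteristic vectors $\xi_l$, $l\neq i,j$, and it is a projective line $\RP^1$. By Theorem~\ref{thm2}, the images $\pi(\sigma)$ of the $n$-simplices of $X$ triangulate $\Delta^n$. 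The edge $e_ie_j$ is therefore covered by $1$-faces of this induced triangulation, and the endpoints of these $1$-faces are images of vertices of $X$. Because the open edge contains no such images, $e_ie_j$ itself is a single edge of the induced triangulation, and $e_i,e_j$ must be vertices of $X$.

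We look at the $n$-simplex $\sigma$ of $X$ whose image contains $e_ie_j$ together with a small neighbourhood of it inside the triangle $e_ie_je_k$. Lift the edge $e_ie_j$ to a segment $L\subset C$ from $e_i$ to $e_j$. Its two lifts $L$ and $\xi_k(L)$ together make up the whole circle $C$, where $\xi_k$ is a reflection fixing $e_ie_j$ but acting nontrivially on the normal direction inside the $2$-dimensional fixed set $\pi^{-1}(e_ie_je_k)\cong\RP^2$. We then use Lemma~\ref{lem4}. Since $e_i\in|\sigma|$, no vertex of $\sigma$ lies in $X_i$, and likewise no vertex lies in $X_j$. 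The other vertices of $\sigma$ must project somewhere in $\Delta^n$. By the standing assumption, none of them projects into the interior of $e_ie_j$ or of any triangle $e_ie_je_k$, so the nearby vertices project into $\Delta^n$ away from all these triangles, or onto the vertices $e_k$.

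We then restrict attention to the invariant subcomplex lying over a single triangle $e_ie_je_k$, that is, to $X\cap\pi^{-1}(e_ie_je_k)$, which is a triangulated $\RP^2$ since it is the fixed set of a subgroup. The assumptions say that this $\RP^2$ has vertices only among $e_i,e_j,e_k$ and possibly over the open edges $(e_i,e_k)$ and $(e_j,e_k)$, with none over the open triangle. The fixed-point component is a full subcomplex, which is justified as in the proof of Lemma~\ref{lem4} via the "disconnected image" argument. The task therefore reduces to a $\ZZ_2^2$-equivariant triangulation of $\RP^2$ whose quotient triangle has no interior vertices and no vertex on the edge $e_ie_j$.

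In dimension $2$ we apply Lemma~\ref{lem4} to a $2$-simplex containing the whole edge $e_ie_j$. Its third vertex must avoid $X_i$ and $X_j$. Without interior vertices, that third vertex can only be $e_k$ or a vertex over $(e_i,e_k)$ or $(e_j,e_k)$. A vertex over $(e_i,e_k)$ lies in $X_j$, and one over $(e_j,e_k)$ lies in $X_i$, so the third vertex is forced to be $e_k$. The simplex $e_ie_je_k$ would then map onto the whole triangle, with $e_k$ lying in the fixed set of $\xi_k$. Its reflection $\xi_k(e_ie_je_k)$ shares all three vertices with it, which is impossible in a simplicial complex unless the two triangles coincide; this is the same contradiction as in Lemma~\ref{lem3}. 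We expect the main obstacle to be making the reduction to the fixed $\RP^2$ rigorous, namely showing that the restriction is a genuine subcomplex and that Lemma~\ref{lem4} applies to it. The hypothesis $n>2$ is needed so that a third index $k$ exists, and it is also what makes the restriction a proper subcomplex to which the $2$-dimensional argument applies.
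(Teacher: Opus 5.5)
Your argument has a genuine gap, and it is the step you flag yourself: the reduction to the fixed $\RP^2$. In this paper an equivariant triangulation only requires that group elements carry simplices to simplices, so fixed-point sets need not be subcomplexes. A reflection can map a simplex to itself by exchanging two of its vertices (Lemma~\ref{lem2}), and its fixed set then cuts through that simplex. This really occurs. For example, $\mu_1$ exchanges the endpoints of the edge $ac$ of the $11$-vertex complex $K''$ of Example~\ref{rp3e}, and there only one of the four global fixed points is a vertex. The ``disconnected image'' argument of Lemma~\ref{lem4} only excludes edges from $e_i$ to vertices of $X_i$; it says nothing about $X\cap\pi^{-1}(e_ie_je_k)$ being a union of simplices. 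You also misread your own hypothesis: $F\setminus V(F)$ contains the open edges $(e_i,e_k)$ and $(e_j,e_k)$. So under the contradiction assumption, $\pi^{-1}(e_ie_je_k)$ contains at most the three vertices $e_i,e_j,e_k$ of $X$. If it were a subcomplex you would be finished immediately, since a triangulation of $\RP^2$ needs at least six vertices. The unproved reduction therefore carries the whole content of the lemma.

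The preliminary steps also fail. The triangulation of $\Delta^n$ in Theorem~\ref{thm2} is obtained by cutting simplices along reflecting hyperplanes (Lemma~\ref{lem2}), so its vertices need not be images of vertices of $X$. You therefore cannot deduce from it that $e_ie_j$ is a single edge of it, or that $e_i,e_j\in V(X)$; the latter can be rescued via Lemmas~\ref{lem4} and~\ref{lem_simp_ref}, but not this way. Worse, the configuration your $2$-dimensional argument starts from never exists. If $e_i,e_j\in V(X)$ then $e_j\in X_i$, so by Lemma~\ref{lem4} $e_ie_j\notin X$, and hence no simplex of $X$ contains the lift $L$.

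The missing idea is that $L$ must leave the star $D_i$ of $e_i$ through the relative interior of a positive-dimensional simplex, and that exit point is what has to be analysed; this is the paper's proof. Let $u_0\in C\cap|\partial D_i|$. By hypothesis $u_0\notin V(X)$, so $u_0$ lies in the relative interior of some $\sigma\in\partial D_i$ with $\dim\sigma>0$. Each of the $n-1$ reflections $\xi_l$, $l\notin\{i,j\}$, fixes $u_0$ and preserves $\partial D_i$, hence $\xi_l(\sigma)=\sigma$. Lemma~\ref{lem_simp_ref}, applied in local coordinates at $u_0$, then forces $\sigma$ to be an edge whose endpoints are exchanged by one $\xi_m$ and fixed by the others. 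These endpoints are vertices of $X$ lying over $e_ie_je_m$, and since they are not fixed points, they do not lie over its vertices.
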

\begin{proof}
We assume that the $\ZZ_2^n$-action  is standard on $\mathbb{RP}^n$ and $(i, j)= (0, 1)$. Let $$D_0 = \{\sigma \in X : e_0 \in |\sigma|\}.$$
By Lemma \ref{lem4}, $V(D_0) \cap X_0$ is empty. Let $n > 2$. 
We observe that $|D_0| \cap \pi^{-1}([e_0, e_1))$
is a curve segment, say $u_0u_1$, with points $u_0$ and $u_1$ and $u_0, u_1 \in |\partial{D_0}|$. Note that $u_0$ and $u_1$ cannot be in $V(X)$ by the assumption. So, there exists a $k$-simplex $\sigma \in \partial{D_0}$ of smallest dimension
such that $u_0 $ belongs to the interior of $ {|\sigma|}$ and $k > 0$.

The segment $u_0u_1$ is fixed by $\mu_2, \ldots, \mu_n$, where
$\{\mu_1, \ldots, \mu_n\}$ are the standard generators of $\ZZ_2^n$. Since $\partial{D_0}$ is $\ZZ_2^n$-equivariant, $\mu_i(\sigma)$ is also a $k$-simplex in $\partial{D_0}$ and $\mu_i(\sigma)=\sigma$, for $i=2, \ldots, n$. We can use Lemma \ref{lem_simp_ref} (up to an equivariant homeomorphism), to get that $\sigma$ is an edge in $X$, and $V(\sigma) \cap \pi^{-1}(F\setminus V(F))$ is nonempty for a 2-dimensional face $F$ containing $e_0e_1$. 

\end{proof}

\begin{remark}\label{remark}
 Let $X$ be a $\ZZ_2^n$-equivariant triangulation of $\mathbb{RP}^n$ and $x \in V(X)$. If $\pi(x)=y$ belongs to the relative interior of a $k$-dimensional face of $\Delta^n$,
then $\pi^{-1}(y) = \{\mu(x): \mu\in \ZZ_2^n\} \subset V(X)$ and $\mathrm{Card}(\{\mu(x): \mu\in \ZZ_2^n\}) = 2^k$. 
\end{remark}

Let $C(x_1,\dots,x_m)$ denote a cycle of length $m$ with vertex set $\{x_1,\dots,x_m\}$. 
Let $B_{x_1,\dots,x_m}(p;q)$ denote the bi-pyramid with base vertices $x_1,\dots,x_m$ and apexes $p$ and $q$.

\begin{defn}\label{defn}
Let $K$ be a triangulated $3$-manifold and let $w$ be a vertex of $K$. 
Assume that the link of $w$ in $K$ is given by
$\lk(w,K)=B_{x_1,\dots,x_m}(p;q),$
for some vertices $x_1,\dots,x_m,p,q\in K$ with $pq\notin K$. Define $K' = \bigl(K \setminus \{\alpha \in K : w \in \alpha\}\bigr) \cup \{\, pq \star C(x_1,\dots,x_m) \,\}.$
Then $|K'|\cong |K|$, and we say that $K'$ is obtained from $K$ by a \emph{retriangulation} at $\operatorname{st}(w,K)$ with apexes $p$ and $q$.
\end{defn}

\begin{example}\label{rp3e}
In this example, we construct a $\ZZ_2^3$-equivariant triangulation of $\mathbb{RP}^3$  on $11$ vertices. 
Consider the cross-polytope $P^3$ in $\RR^3$. Then $\mathbb{RP}^3$
can be obtained by identifying the antipodal points of the boundary of $P^3$. Let  $B_1 = \{(x_1, x_2, x_3) \in P^3 : x_3 \geq 0\} \quad
 \mbox{and} \quad B_2 = \{(x_1, x_2, x_3) \in P^3 : x_3 \leq 0\}$. We can see that $P^3=B_1\cup B_2$.  Let $V(B_1)=\{1, 2, -1, -2, 3\}$ and $V(B_2)=\{1, 2, -1, 2, -3\}$. We construct a triangulation of $B_1$ and $B_2$, see Figure \ref{rpn203}.
\begin{figure}[ht]
\centerline{
\scalebox{.60}{
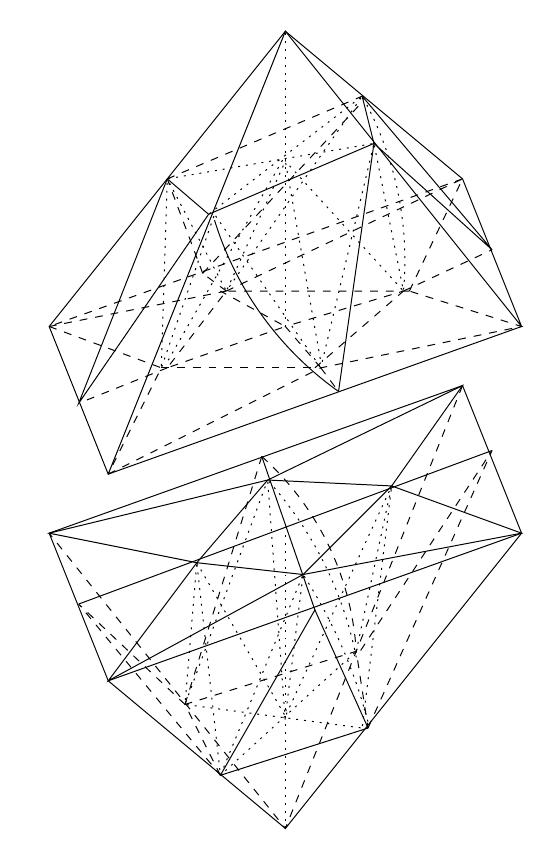
 }
 }
\caption {Triangulations of $B_1$ and $B_2$. }
\label{rpn203}
\end{figure}
By using the triangulations of $B_1$ and $B_2$, we get a triangulation of $\mathbb{RP}^3$, say $K$ with $16$ vertices. Followings are the $3$-simplices of $K$.
\noindent$$158a, 15af, 18ae, 167c, 16ce, 17cf, 158c, 15cf, 18ce, 167a, 17af, 16ae,
256b, 25bf, 26be, 278d,$$ 
$$27df, 28de, 256d, 25df, 26de, 278b, 27bf, 28be, 3abg, 3bcg, 3cdg, 3adg, 3abh, 3bch, 3cdh, 3adh,$$
$$456g, 467g, 478g, 458g,
 456h, 467h, 478h, 458h, 56bg, 67cg, 78dg, 58ag, 56dh, 67ah, 78bh, 58ch,$$ 
 $$ 5abf, 5abg, 6bce, 6bcg, 7cdf, 8ade, 5cdf, 5cdh, 6ade, 6adh, 7abf, 8bce,
 7cdg, 7abh, 8adg, 8bch.$$

Note that $K$ is a $\ZZ_2^3$-equivariant triangulation of $\mathbb{RP}^3$, where the action is determined by 
\begin{equation}\label{rp31}
\begin{array}{ll} \mu_1(5)=6,~ \mu_1(7)=8, ~\mu_1(e)=f, ~ \mu_1(a)=c, ~\mu_2(5)=8, ~\mu_2(6)=7,\\
 ~\mu_2(e)=f, \mu_2(b)=d, ~ \mu_3(a)=c, \text{and} ~\mu_3(b)=d,
\end{array}
\end{equation}
where $\mu_1, \mu_2, \mu_3$ are the standard generators of $\ZZ_2^3$. The atutomorphism group of $K$ is $\ZZ_8 : (\ZZ_2 \times \ZZ_2)$, where $:$ denote the semi-direct product. The above group has a subgroup isomorphic to $\ZZ_2^3$.

In this triangulation, the $\st(e)$ and $\st(f)$ are described in Figure \ref{rpn7}. We can observe that the map 
$$ x : \st(e,K) \to \st(f,K)$$
is a simplicial isomorphism for any $\mu \in \ZZ_2^3$. 
\begin{figure}[ht]
\centerline{
\scalebox{.50}{
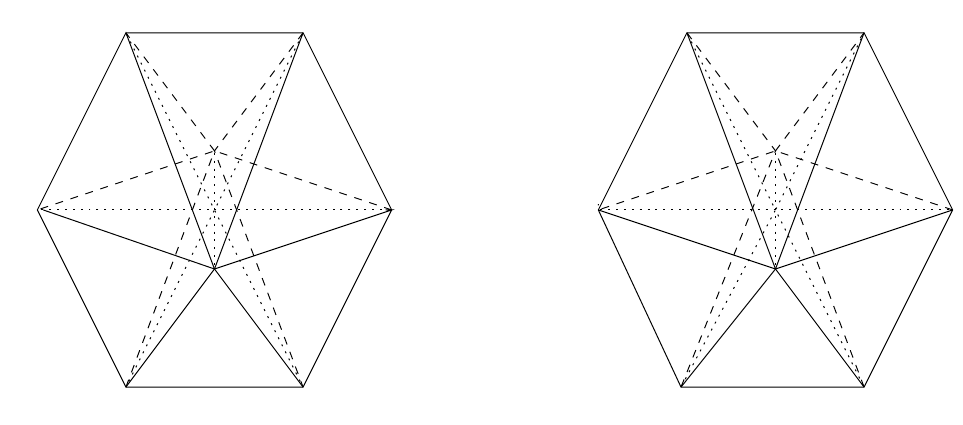
 }
 }
\caption {$\st(e,K)$ and $\st(f,K)$ respectively.}
\label{rpn7}
\end{figure}

Now, we retriangulate $K$ at $\st(e,K)$ with apexes $6,8$ and $\st(f,K)$ with apexes $5,7$. We obtain a new triangulation, say $K'$ of $\RP^3$ with $14$ vertices. The action on $K'$ follows from the action on $K$. Thus, $K'$ is a
a $\ZZ_2^3$-equivariant triangulation of $\RP^3$. The automorphism group of $K'$ is $\ZZ_2\times S_4$ where $S_4$ is the permutation group. 
The $3$-simplices of $K'$ are: 
$$158a, 157a, 168a, 167c, 168c, 158c, 157c, 167a, 256b, 257b, 268b, 278d,
257d, 268d, 256d,$$ 
$$278b, 3abg, 3bcg, 3cdg, 3adg, 3abh, 3bch, 3cdh, 3adh,
456g, 467g, 478g, 458g, 456h, 467h,$$ 
$$ 478h, 458h, 56bg, 67cg, 78dg, 58ag,
57ab, 57cd, 56dh, 68ad, 68bc, 67ah, 78bh, 58ch, 5abg,$$ 
$$5cdh, 6bcg, 6adh, 7cdg, 7abh, 8adg, 8bch.$$

In this $14$-vertex triangulation, the star of vertices $1, 2$ and $3$ are given in Figure \ref{rpn9}.
\begin{figure}[ht]
\centerline{
\scalebox{.50}{
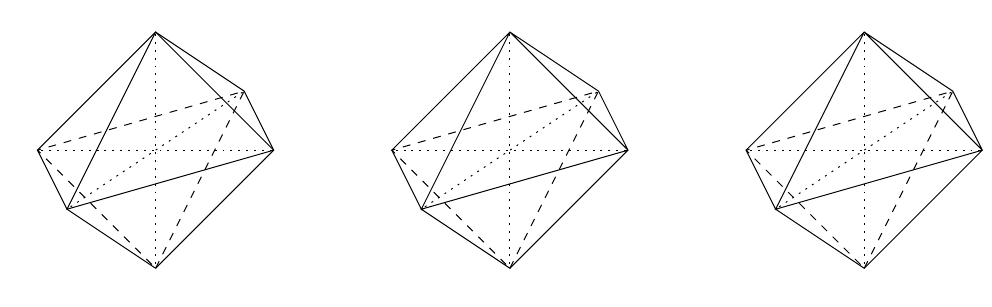
 }
 }
\caption {$\st(1)$, $\st(2)$ and $\st(3)$ respectively.}
\label{rpn9}
\end{figure}
In this case, one can observe that the map
$\mu: \st(i) \to \st(i)$ is a simplicial isomorphism for $i \in \{1, 2, 3\}$ and for any $\mu\in \ZZ_2^3$. In other words $1,2$, and $3$ are the fixed points in $K'$. Now, we retriangulate $K'$ at $\st(i,K)$ with apexes accordingly for $i\in\{1, 2, 3\}$.  We get a new triangulation, say $K''$ of $\RP^3$. The group action on $K''$ directly follows from the group action on $K'$. The automorphism group of $K''$ is $\ZZ_2\times S_4$. 

The $3$-simplices of $K''$ are: 
$$57ac, 68ac, 67ac, 58ac, 56bd, 57bd, 68bd, 78bd, abgh, bcgh, cdgh, adgh,
456g, 467g, 478g, 458g, $$ 
$$456h, 467h, 478h, 458h, 56bg, 67cg, 78dg, 58ag, 57ab, 57cd, 56dh, 68ad, 68bc, 67ah, 78bh, 58ch,$$
$$5abg, 5cdh, 6bcg, 6adh, 7cdg, 7abh, 8adg, 8bch.$$
So, we construct an $11$-vertex $\ZZ_2^3$-equivariant triangulation of $\mathbb{RP}^3$.

\end{example}

\begin{remark}
Since the minimal triangulation of $\mathbb{RP}^3$ contains $11$ vertices, the $11$ vertex triangulation of $\mathbb{RP}^3$ constructed in Example \ref{rp3e} is  the minimal $\ZZ_2^3$-equivariant triangulation. 
\end{remark}

\begin{theorem}\label{thm3}
There is a unique $\ZZ_2^3$-equivariant triangulation of $\mathbb{RP}^3$ with $11$ vertices.
\end{theorem}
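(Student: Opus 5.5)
Existence is Example~\ref{rp3e}, which gives the $11$-vertex complex $K''$. So the work is uniqueness up to equivariant isomorphism. My plan is to first pin down the equivariant vertex set and then force the facets.

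\textbf{Step 1: the vertex set.} Let $X$ be a $\ZZ_2^3$-equivariant triangulation of $\RP^3$ with $11$ vertices, and let $\pi\colon |X|\to\Delta^3$ be the orbit map. By Remark~\ref{remark}, a vertex over the relative interior of a $k$-face of $\Delta^3$ lies in an orbit of size $2^k$. Thus
\[
11=\sum_k 2^k v_k ,
\]
where $v_k$ is the number of orbits over open $k$-faces. The four fixed points $e_0,\dots,e_3$ are not necessarily vertices, so I would first decide which of them are.

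Suppose $e_i$ is not a vertex. By the argument in the proof of Lemma~\ref{lem4}, $e_i$ then lies in the interior of an edge $u_ku_l$ with $\xi_i(u_k)=u_l$, and both $u_k,u_l$ lie over the edges of $\Delta^3$ through $e_i$. For $e_i$ that is a vertex, I would use the link $\lk(e_i,X)$. By Lemma~\ref{lem4} it avoids $X_i$, and it is a $\ZZ_2^3$-equivariant $2$-sphere, so Lemma~\ref{lem3} gives it at least $6$ vertices. If it has exactly $6$, Theorem~\ref{2nvertexsphere} makes it an octahedron with vertices over the edges $e_ie_j$.

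Lemma~\ref{lem5} then says that each edge $e_ie_j$ of $\Delta^3$ either carries a vertex orbit (of size $2$) or some adjacent $2$-face carries one (of size $4$). Combining these constraints with the parity of Lemma~\ref{lem1}, I expect the only solution to be the one in $K''$: no fixed vertices; one size-$2$ orbit over each of the edges $e_0e_1$, $e_0e_2$, $e_0e_3$ (these are $\{5,7\}$, $\{6,8\}$, $\{g,h\}$ up to relabelling); and one size-$4$ orbit $\{a,b,c,d\}$ over a $2$-face. The count is $6+4=10$ plus one further vertex, which has to be reconciled with the actual labels $4,5,\dots,h$ of $K''$. I will check this against the induced triangulation of $\Delta^3$ from Theorem~\ref{thm2}.

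To rule out all other distributions, for example ones including fixed vertices, I would use $g_2\ge 0$, i.e.\ $f_1\ge 4\cdot 11-10=34$. Against this I would set the missing-edge bounds: no edge $uv$ with $\xi(u)=v$ and the segment crossing the fixed hyperplane, as in the arguments of Lemmas~\ref{lem4} and~\ref{missingedges}. Euler characteristic zero together with $f_3=2f_1-2f_0$, which holds for $3$-manifolds, then determines $f_3$ once $f_1$ is known.

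\textbf{Step 2: the facets.} With the vertex set fixed, I would use Theorem~\ref{thm2}: $X$ induces a triangulation $T$ of $\Delta^3$ whose vertices are the orbit images, and $X$ is recovered from $T$ by the $\ZZ_2^3$-action. So it suffices to classify triangulations of $\Delta^3$ with this fixed vertex configuration on edges and faces, subject to two conditions:
\begin{itemize}
\item the lifted complex has no two simplices meeting improperly across a mirror;
\item the lifted complex is a combinatorial manifold, with each vertex link a $2$-sphere.
\end{itemize}
This is a finite case analysis. The links of the vertices lying over edges of $\Delta^3$ are equivariant $2$-spheres under their stabilizers, and I would again use Theorem~\ref{2nvertexsphere} to force octahedral or bipyramidal links. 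This should fix $T$ up to symmetries of $\Delta^3$ preserving the characteristic function, and hence $X\cong K''$.

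\textbf{Main obstacle.} The hardest step is Step 1: excluding configurations in which some $e_i$ are vertices, or in which vertex orbits sit over $2$-faces in other patterns. The counting constraints alone may not suffice. There I would first try the $g_2$ inequality combined with explicit missing-edge counts, and if that fails, fall back on analysing $\lk(e_i)$ case by case.
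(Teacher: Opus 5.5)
Your proposal has genuine gaps, starting with the configuration you aim for in Step~1. Your own identity $11=\sum_k 2^k v_k$ (equivalently Lemma~\ref{lem1}) forces $v_0$ to be odd, so $X$ has one or three fixed vertices. ``No fixed vertices'' is therefore impossible, and this is why your count stops at $10$. In $K''$ the vertex $4$ is fixed, with link the octahedron $B_{\{5,6,7,8\}}(g;h)$. The remaining orbits are $\{5,6,7,8\}$, a single orbit of size $4$ (since $\mu_1(5)=6$ and $\mu_2(5)=8$), and $\{a,c\}$, $\{b,d\}$, $\{g,h\}$ of size $2$. Note that $\mu_3$ must also swap $g$ and $h$, which Example~\ref{rp3e} does not list. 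Your assertion that a $6$-vertex link of a fixed vertex has all its vertices over edges $e_ie_j$ is also false. Lemma~\ref{twononzero} allows four vertices $(\pm a,\pm b,0)$ over a $2$-face plus one antipodal pair over an edge, and this is exactly $\lk(4,K'')$. You defer the exclusion of competing configurations to $g_2\geq 0$, but that gives only $f_1\geq 34$. The paper instead counts vertices of the spheres $\partial D_j$, each of which has an even number $\geq 6$ of vertices, none on $X_j$ by Lemma~\ref{lem4}. Together with Lemma~\ref{lem5}, this excludes three fixed vertices and vertices over the interior of $\Delta^3$, and shows that the unique fixed vertex $e_i$ has $\mathrm{Card}(V(\partial D_i))=6$.

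Step~2 is an unexecuted case analysis, and the tool you plan to use there does not apply. Theorem~\ref{2nvertexsphere} concerns $\ZZ_2^n$-equivariant $(n-1)$-spheres with exactly $2n$ vertices. The link of a non-fixed vertex is invariant only under its stabilizer ($\ZZ_2^2$ or $\ZZ_2$) and need not be minimal. The reduction to $T$ is also not as clean as you state, because $T$ has vertices that are not images of vertices of $X$. These include the non-vertex fixed points, which in $K''$ lie inside the edges $gh$, $ac$, $bd$, and the cut points produced by Lemma~\ref{lem2} in the proof of Theorem~\ref{thm2}. Lifting $T$ therefore gives a subdivision of $X$, not $X$ itself.

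The idea you are missing is the one that lets the paper avoid any enumeration. Once the fixed vertex has degree $6$, its four non-neighbours have degree at most $9$, so $2f_1\leq 6+4\cdot 9+6\cdot 10$, i.e.\ $f_1\leq 51$. By \cite{LS}, every $11$-vertex triangulation of $\RP^3$ has $f_1\geq 51$ (i.e.\ $g_2\geq 17$), and exactly one has $f_1=51$; that one must be $K''$. A minor point: for a closed $3$-manifold $f_3=f_1-f_0$, whereas $2(f_1-f_0)$ is $f_2$.
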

\begin{proof}
Let $X$ be a $\ZZ_2^3$-equivariant triangulation of $\mathbb{RP}^3$
with $11$ vertices. Let $\pi : |X| \to \Delta^3$ be the orbit map of the $\ZZ_2^3$-action on $|X|$,
where $\Delta^3$ is the standard $3$-simplex with vertices $e_0, \ldots, e_3$ and codimension one face $F_i$
not containing the vertex $e_i$ for $i=0, \ldots, 3$. Let $X_i = \pi^{-1}(F_i)$ for $i=0, \ldots, 3$.

 Let $$D_{i}=\{\sigma \in X : e_i \in |\sigma|\} ~ \mbox{for} ~ i=0, \ldots, 3.$$
Thus $D_{i}$ is $\ZZ_2^3$-equivariantly homeomorphic to $\mathbb{D}^3$ and the boundary $\partial{D_i}$ is a
subcomplex of $X$ for $i= 0, \ldots, 3$, and a $\ZZ_2^3$-equivariant triangulation of $\Sp^2$. So by Lemma \ref{lem3}, $\mathrm{Card}(V(\partial{D_{i}}))$ is even and
$\mathrm{Card}(V(\partial{D_{i}})) \geq 6$ for $i=0, \ldots, 3$. On the other hand the number of vertices in
$S := V(X)\setminus\{e_0, \ldots, e_3\}$ is even by Lemma \ref{lem1}.
Thus $X$ contains at least one fixed vertex and $X$ can not contain $2$ or $4$ fixed vertices.
  Now, we have the following claim.
\begin{claim}\label{claim}
$X$ does not contain three fixed vertices.
\end{claim}

Suppose that $X$ contains three fixed vertices
namely $e_0, e_1, e_2$. So $\mathrm{Card}(S) = 8$. Let $S = \{x_1, \ldots, x_8\}$. 
Let $A = |X|\setminus \cup_{i=0}^3 X_i$ and $x_i \in A$ for some $i \in \{1, \ldots, 8\}$.
Thus $S = \{\mu x_i : \mu\in \ZZ_2^3\}$. Then $V(X) \cap \pi^{-1}(F\setminus V(F))$ is empty for any proper face $F$ containing the edge $e_0e_3$ of $\Delta^3$. Which contradicts Lemma \ref{lem5}. Therefore, $V(X) \cap A$ is empty and $\pi(x_i) \in \cup_{j=0}^3 F_j\setminus \{e_0, \ldots, e_3\}~ \mbox{for} ~ i=1, \ldots, 8.$

If $\mathrm{Card}(V(\partial{D_0})) = 10$, then $e_1,e_2\in V(\partial{D_0})$, which contradicts Lemma \ref{lem4}. Thus, $6\leq \mathrm{Card}(V(\partial{D_0}))\leq 8$. 
For any $x\in V(\partial{D_0})$, $\pi(x)$ lie either in the interior of an edge $e_0e_i$, or a face $e_0e_ie_j$, where $i,j\in\{1,2,3\}$. Now, from Remark \ref{remark}, we know that if for any $x\in V(\partial{D_0})$,  $\pi(x)$ lie in the interor of an edge $e_0e_i$ for some $i\in\{1,2,3\}$ in $\Delta^3$ then this contributes two vertices to the $X_k$ and $X_l$ where,   $k,l\in\{1,2,3\}\setminus\{i\}$. If $x\in V(\partial{D_0})$ and $\pi(x)$ lie in the interor of a face $e_0e_ie_j$ for some $i,j\in\{1,2,3\}$ in $\Delta^3$ then this contributes four vertices to the $X_l$ where $l\in\{1,2,3\}\setminus\{i,j\}$. 

With the above observation, in either of the cases:  $\mathrm{Card}(V(\partial{D_0}))=6$ or $\mathrm{Card}(V(\partial{D_0}))=8$, we find that there exists an $X_j$, containing at least $6$ vertices. This implies that $\mathrm{Card}(V(\partial{D_{j}})) \leq 5$.
This is a contradiction. This proves the 
 Claim \ref{claim}.

Hence $X$ contains exactly one fixed vertex, say, $e_i$ for some $i\in \{0,1,2,3\}$. So $\mathrm{Card}(S) = 10$. Let $S = \{y_1, \ldots, y_{10}\}$.
Let $y_j \in A$ for some $j \in \{1, \ldots, 10\}$. Then $\{\mu y_j : \mu\in \ZZ_2^3\} \subset S ~ \mbox{and} ~
\mathrm{Card}((S- \{\mu y_j : \mu\in \ZZ_2^3\})) = 2.$ Therefore, $\pi(S\setminus \{\mu y_j : \mu\in \ZZ_2^3\})$ is an interior point
of an edge, say, $e_ke_l$, of $\Delta^3$. So $V(X) \cap \pi^{-1}(F\setminus V(F))$ is empty for any proper
face $F$ containing the edge $e_pe_q$ of $\Delta^3$ where, $\mathrm{Card}(\{e_k,e_l\}\cap \{e_p,e_q\})\leq 1$. This contradicts Lemma \ref{lem5}.
Thus $V(X) \cap A$ is empty and $\pi(y_j) \in \cup_{k=0}^3 F_k \setminus\{e_0, \ldots, e_3\}~ \mbox{for} ~ j=1, \ldots, 10.$

Assume that $\mathrm{Card}( V(\partial{D_i})) \geq 8$. Then again there exists an $X_j$, which contains at least $6$ vertices for some $j \in \{0, \ldots, 3\}\setminus \{i\}$. This implies that $\mathrm{Card}(V(\partial{D_{j}})) \leq 4$. This is a contradiction. Therefore, $\mathrm{Card}( V(\partial{D_i}))=6$. 

From the above discussions, we find that $X$ has exactly one fixed vertex whose degree is six in $X$. Thus, there are at least four vertices in $X$ with degrees $\leq 9$. This implies that $f_1(X)\leq 51$. In \cite{LS}, it was proved that there exists a unique triangulation of $\mathbb{RP}^3$ with $(f_0,f_1)=(11,51)$ and there is no triangulation of $\mathbb{RP}^3$ with $f_1< 51$.
Hence, we deduce that there is a unique $\ZZ_2^3$-equivariant triangulation of $\mathbb{RP}^3$ with $11$ vertices.
\end{proof}


 \section{Equivariant Triangulation of Connected Sums}\label{sec:connectedsum}

In this section, we give a construction method to construct equivariant triangulations of connected sums of two manifolds. 
Also, we  give an equivariant triangulation of $\mathbb{RP}^3\#\mathbb{RP}^3$ on $17$ vertices.  The minimum triangulation of $\mathbb{RP}^3\#\mathbb{RP}^3$ is known on $15$ vertices.

A {\em subcomplex} of a simplicial complex is a subcollection of simplices that also forms a simplicial complex. If $K$ is a simplicial complex and $S\subseteq V(K)$ is a subset then $\mathrm{Ind}_{K}(S)=\{\sigma\in K: V(\sigma)\subseteq S\}$ is called the {\em induced subcomplex} of $K$ induced on the vertex set $S$.

\begin{defn}
 {\rm  Let $K$, $L$ be the triangulation of $n$-manifolds $M$ and $N$, respectively. Let $u$ and $v$ be two vertices in $K$ and $L$, respectively, such that $\lk(u,K)$ and $\lk(v,L)$ are combinatorially isomorphic and $
 \mathrm{Ind}_K(V(\lk(u,K)))=\lk(u,K)$. 
      Suppose that $\psi: \lk(u,K)\rightarrow \lk(v,L)$ is an isomorphism. Define ${\rm ast}(u,K)=\{\sigma\in K: u\notin \sigma\}$ and ${\rm ast}(v,L)=\{\gamma\in L: v\notin\gamma\}$ as the antistar of $u$ and $v$ in $K$ and $L$ respectively. Now, we define the generalized star-connected sum $K\#_{\psi}L$ as the simplicial complex obtained after gluing the faces of ${\rm ast}(u,K)$ with faces of  ${\rm ast}(v,L)$ according to the map $\psi$.} 
\end{defn}
 This generalized star-connected sum is the generalized version of the star-connected sum given in [Definition $1.2$, \cite{KNS}]. Observe that $K\#_{\psi}L$ is a triangulation of $M\#N$.

 \begin{lemma}\label{g2sum}
Let $K,L$ be two triangulation of an  $n$-manifold $M$. Let $\lk(u,K)$ and $\lk(v,L)$ are combinatorially isomorphic and $\mathrm{Ind}_K(V(\lk(u,K)))=\lk(u,K)$.  Suppose that $\psi: \lk(u,K)\rightarrow \lk(v,L)$ is the isomorphism. Then $g_2(K\#_{\psi}L)=g_2(K)+g_2(L)-f_1(\lk(u,K)) + {(n-1)f_0(\lk(u,K))-\frac{(n+1)(n-2)}{2}}.$
 \end{lemma}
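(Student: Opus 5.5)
The plan is to compute $f_0$ and $f_1$ of $K\#_{\psi}L$ in terms of those of $K$, $L$ and the common link, and then substitute into the definition $g_2=f_1-(n+1)f_0+\binom{n+2}{2}$. Both $K$ and $L$ are $n$-dimensional, so the same $\binom{n+2}{2}$ appears in all three $g_2$'s. Write $m=f_0(\lk(u,K))$ and $e=f_1(\lk(u,K))$. Since $\psi$ is an isomorphism, $\lk(v,L)$ also has $m$ vertices and $e$ edges.

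\textbf{Vertices.} The complex $K\#_{\psi}L$ is obtained from ${\rm ast}(u,K)\sqcup{\rm ast}(v,L)$ by identifying $\lk(u,K)$ with $\lk(v,L)$ via $\psi$. Hence
$$f_0(K\#_{\psi}L)=(f_0(K)-1)+(f_0(L)-1)-m.$$

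\textbf{Edges.} The edges of ${\rm ast}(u,K)$ are those of $K$ minus the $m$ edges through $u$, and similarly for $L$. After gluing, an edge is counted twice exactly when both its endpoints lie in the glued link and it is an edge on both sides.

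This is the one step needing care, and it is where the hypothesis $\mathrm{Ind}_K(V(\lk(u,K)))=\lk(u,K)$ enters. By this hypothesis, every edge of ${\rm ast}(u,K)$ with both endpoints in $V(\lk(u,K))$ is already an edge of $\lk(u,K)$. So the only edges identified are the $e$ edges of the link, and no spurious edge of ${\rm ast}(v,L)$ is duplicated. Extra edges of $L$ among link vertices, if any, are simply kept once. Therefore
$$f_1(K\#_{\psi}L)=f_1(K)+f_1(L)-2m-e.$$

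\textbf{Substitution.} Plugging in gives
$$g_2(K\#_{\psi}L)=g_2(K)+g_2(L)-e+(n-1)m+2(n+1)-\binom{n+2}{2}.$$
The constant term simplifies as $2(n+1)-\frac{(n+2)(n+1)}{2}=-\frac{(n+1)(n-2)}{2}$, which is exactly the stated formula.

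The only potential obstacle is the edge-count justification above. Everything else is bookkeeping.
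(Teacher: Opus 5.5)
Your proposal is correct and follows the paper's proof: the paper likewise writes $f_0(K\#_{\psi}L)=f_0(K)+f_0(L)-f_0(\lk(u,K))-2$ and $f_1(K\#_{\psi}L)=f_1(K)+f_1(L)-2f_0(\lk(u,K))-f_1(\lk(u,K))$, then substitutes into the definition of $g_2$. The paper states the edge count without comment, whereas you justify it explicitly using $\mathrm{Ind}_K(V(\lk(u,K)))=\lk(u,K)$ to rule out double-counted edges, and that justification is sound.
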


\begin{proof}
From the definition of $K\#_{\psi}L$, we have the following.
\begin{itemize}
    \item $f_0(K\#_{\psi}L)=f_0(K)+f_0(L)-f_0(\lk(u,K))-2$.
    \item $f_1(K\#_{\psi}L)=f_1(K)+f_1(L)-2f_0(\lk(u,K))-f_1(\lk(u,K))$.
    \end{itemize}

    Thus,
    \begin{align*}
   g_2(K\#_{\psi}L)=&f_1(K\#_{\psi}L)-(n+1)f_0(K\#_{\psi}L)+\binom{n+2}{2}\\
 =& f_1(K)+f_1(L)-2f_0(\lk(u,K))-f_1(\lk(u,K))\\
 &-(n+1)(f_0(K)+f_0(L)-f_0(\lk(u,K)-2)+\binom{n+2}{2}\\
 =& f_1(K)-(n+1)f_0(K)+\binom{n+2}{2} +f_1(L)-(n+1)f_0(L)+\binom{n+2}{2}\\
 &+(n-1)f_0(\lk(u,K))-f_1(\lk(u,K))+2(n+1)- \binom{n+2}{2}\\
 =& g_2(K)+g_2(L)-f_1(\lk(u,K))+(n-1)f_0(\lk(u,K))-\frac{(n+1)(n-2)}{2}.
    \end{align*}
\end{proof}

\begin{corollary}
    Let $K,L$ be two triangulation of an  $3$-manifold $M$. Let $\lk(u,K)$ and $\lk(v,L)$ are combinatorially isomorphic and $\mathrm{Ind}_K(V(\lk(u,K)))=\lk(u,K)$.  Suppose that $\psi: \lk(u,K)\rightarrow \lk(v,L)$ is the isomorphism. Then $g_2(K\#_{\psi}L)=g_2(K)+g_2(L) -f_0(\lk(u,K))+4.$
\end{corollary}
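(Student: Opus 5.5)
The corollary is Lemma~\ref{g2sum} specialised to $n=3$, after computing $f_1(\lk(u,K))$ from $f_0(\lk(u,K))$. Since $K$ triangulates a $3$-manifold, $\lk(u,K)$ is a triangulated $2$-sphere. Write $m=f_0(\lk(u,K))$. The plan is to compute $f_1(\lk(u,K))$ in terms of $m$ and substitute.

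First, for a triangulated $2$-sphere $S$, Euler's formula gives $f_0(S)-f_1(S)+f_2(S)=2$. Every edge lies in exactly two triangles and every triangle has three edges, so $2f_1(S)=3f_2(S)$. Eliminating $f_2$ from these two relations yields
\begin{equation*}
f_1(S)=3f_0(S)-6 .
\end{equation*}
Applied to $S=\lk(u,K)$, this gives $f_1(\lk(u,K))=3m-6$.

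Next, put $n=3$ in Lemma~\ref{g2sum}. The correction term becomes
\begin{equation*}
-f_1(\lk(u,K))+(n-1)m-\tfrac{(n+1)(n-2)}{2} = -(3m-6)+2m-2 = -m+4 .
\end{equation*}
Hence $g_2(K\#_{\psi}L)=g_2(K)+g_2(L)-f_0(\lk(u,K))+4$, which is the stated identity.

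There is no real obstacle here. The only point that needs justification is that the vertex link in a triangulated $3$-manifold is a $2$-sphere, which is where the manifold hypothesis is used. It is also worth checking that the constants in Lemma~\ref{g2sum} specialise correctly: $(n+1)(n-2)/2=2$ when $n=3$.
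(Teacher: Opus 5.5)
Your proposal is correct and follows the paper's proof: both use that $\lk(u,K)$ is a triangulated $2$-sphere, so $f_1(\lk(u,K))=3f_0(\lk(u,K))-6$, and then substitute into Lemma~\ref{g2sum} with $n=3$. You also derive the edge count from Euler's formula and check that $(n+1)(n-2)/2=2$ for $n=3$, two steps the paper leaves implicit.
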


\begin{proof}
    Since  $\lk(u.K)$ is a triangulated $2$-sphere, $f_1(\lk(u,K))=3f_0(\lk(u,K))-6$.
    Now, the result follows from Lemma \ref{g2sum}.
\end{proof}

\begin{theorem}\label{three}
    The manifold $\RP^3\#\RP^3$ has (at least) three different minimal $g$-vectors.
\end{theorem}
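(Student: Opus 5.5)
The plan is to exhibit three triangulations of $\RP^3\#\RP^3$ whose pairs $(f_0,g_2)$, equivalently $(f_0,f_1)$ for fixed dimension $3$, are pairwise incomparable in the componentwise order and each of which is minimal. Here ``minimal $g$-vector'' means that no triangulation of $\RP^3\#\RP^3$ has both $f_0$ and $g_2$ at most these values with at least one strict inequality. Two of the three vectors come from Lutz's enumeration in \cite{LS}, namely the $15$-vertex minimal triangulations and the minimal-$g_2$ triangulations listed there, which I take as given together with the minimality statements proved there. The new contribution is a third vector coming from a $\ZZ_2^3$-equivariant triangulation on $17$ vertices, built with the generalized star-connected sum.

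\textbf{Step 1: the construction.} I take $K$ and $L$ to be $\ZZ_2^3$-equivariant triangulations of $\RP^3$ from Example \ref{rp3e}, namely the $11$-vertex complex $K''$ and possibly the intermediate complex $K'$. I choose fixed vertices $u\in K$ and $v\in L$ whose links are equivariantly isomorphic octahedral (or bipyramidal) spheres, as in Theorem \ref{2nvertexsphere}. I then check $\mathrm{Ind}_K(V(\lk(u,K)))=\lk(u,K)$, which is a finite check on the listed facets. The gluing map $\psi$ must be chosen to commute with the action, so that $K\#_\psi L$ is $\ZZ_2^3$-equivariant and triangulates $\RP^3\#\RP^3$.

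\textbf{Step 2: the counts.} I compute $f_0$ from the formula in the proof of Lemma \ref{g2sum}, aiming for $17$ vertices; this dictates which pair of complexes and which link size is used. I compute $g_2$ from the Corollary following Lemma \ref{g2sum}, $g_2(K\#_\psi L)=g_2(K)+g_2(L)-f_0(\lk(u,K))+4$, taking $g_2(K'')=51-44+10=17$ and $g_2(K')$ from its facet list.

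\textbf{Step 3: minimality, the main obstacle.} I must show that no triangulation of $\RP^3\#\RP^3$ has at most $17$ vertices and strictly smaller $g_2$, or the same $g_2$ and fewer vertices. My first attempt is the lower bound $g_2\ge 10\,\beta_1(M;\ZZ_2)=20$ of Novik--Swartz type for triangulated $3$-manifolds. If the construction attains $g_2=20$, then its $g_2$ is the global minimum over all triangulations of $\RP^3\#\RP^3$.

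The remaining point is that no triangulation on $15$ or $16$ vertices attains this value of $g_2$. For this I would appeal to Lutz's enumeration in \cite{LS}, which lists the possible $f$-vectors on $15$ and $16$ vertices; I expect all of them to have $g_2>20$. If the value attained is above $20$, I would instead have to rely entirely on the explicit data of \cite{LS} to rule out smaller vectors. This reliance on external enumeration is the step I expect to be most delicate.
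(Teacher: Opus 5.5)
Your Steps 1--2 reproduce the paper's construction. You glue $K'$ at its fixed vertex $3$ (link $B_{a,b,c,d}(g;h)$, which is induced since $ac,bd,gh\notin K'$) to $K''$ at its fixed vertex $4$. This gives $f_0=14+11-6-2=17$. Since $K'$ has $52$ facets, $f_1(K')=66$, and so $g_2=20+17-6+4=35$, i.e.\ $g=(12,35)$.

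The gap is Step 3. You commit to proving that $(12,35)$ is itself a minimal $g$-vector, and neither of your tools does this. The Novik--Swartz bound $g_2\ge 20$ is far below the value $35$ your construction actually attains. What \cite{LS} provides, and what the paper uses, is only the minimality of $(10,36)$ and $(13,34)$; it is not a complete list of $f$-vectors on $15$ and $16$ vertices. Those two facts rule out $15$-vertex triangulations with $g_2\le 35$, and all triangulations with $f_0\le 17$ and $g_2\le 34$. They do not rule out a $16$-vertex triangulation with $g_2=35$, i.e.\ $g=(11,35)$, which is incomparable to both known vectors. Neither the paper nor \cite{LS} excludes it, so your plan as written does not close.

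The missing idea is that minimality of $(12,35)$ is not needed. Achievable $g$-vectors lie in $\mathbb{Z}_{\ge 0}^2$, since $g_1=f_0-5\ge 0$ and $g_2\ge 0$ by Walkup. Hence only finitely many achievable vectors lie componentwise below $(12,35)$. A minimal one among them, call it $m$, is minimal in the whole set of achievable vectors. Since $m_2\le 35<36$ and $m_1\le 12<13$, $m$ is neither $(10,36)$ nor $(13,34)$, so it is a third minimal $g$-vector. With this argument replacing Step 3, your proof becomes the paper's: cite the two vectors from \cite{LS} and exhibit the $17$-vertex triangulation with $g=(12,35)$.
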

\begin{proof}
In [Theorem 28, \cite{LS}], author proved that there are at least two minimal $g$-vectors for $\RP^3\#\RP^3$: $g=(13,34)$ and $g=(10,36)$. Here, we provide a triangulation of $\RP^3\#\RP^3$ which has $g$-vector $g=(12,35)$.

Consider the $14$ vertex triangulation, $K'$ obtained in Example \ref{rp3e}. We Observe that $\mathrm{Ind}_K(V(\lk(3),K'))=\lk(3,K')$. Now, consider the minimal equivariant triangulation, $K''$ of $\RP^3$ with $11$ vertices given in Example \ref{rp3e}. Then  $\lk(4,K'')$ is isomorphic with $\lk(3,K')$. 

Now, using generalized star connected sum, we construct a triangulation $K'\#K''$ by identifying the $\mathrm{ast}(3,K')$ and $\mathrm{ast}(4,K'')$ according to the isomorphism between them.  $K'\#K''$ is a triangulation of $\RP^3\#\RP^3$ with $17$ vertices, and its $f$-vector is $(17,93,150,75)$. The $g$-vector of $K'\#K''$ is $g=(12,35)$.
\end{proof}

\begin{theorem}\label{connectedsum}
    Let $K$, $L$ be the $G$-equivariant triangulations of $n$-manifolds $M$ and $N$, respectively for some finite group $G$. Let $u\in K$ and $v\in L$ be the fixed vertices under the group action. Let $K$ and $L$ satisfy the following:
\begin{itemize}
    \item There is a combinatorial isomorphism $\psi:\lk(u,K)\rightarrow \lk(v,L)$ such that $\psi(\mu(x))=\mu(\psi(x))$ for all $\mu\in G$ and $x\in \lk(u,K)$,
    
    \item $\mathrm{Ind}_K(V(\lk(u,K)))=\lk(u,K)$, and  $\mathrm{Ind}_L(V(\lk(v,L)))=\lk(v,L).$  
\end{itemize}
Then $K\#_{\psi}L$ is a $G$-equivariant triangulation of $M\#N$.
  
\end{theorem}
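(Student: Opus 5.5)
We would prove Theorem~\ref{connectedsum} by defining an explicit $G$-action on $K\#_{\psi}L$ and checking that it sends simplices to simplices and is compatible with the action on $M\#N$. Recall that $K\#_{\psi}L$ is built from $\mathrm{ast}(u,K)$ and $\mathrm{ast}(v,L)$ by identifying each $x\in V(\lk(u,K))$ with $\psi(x)\in V(\lk(v,L))$. Its vertex set is therefore
\[
V(K\#_{\psi}L)=\bigl(V(K)\setminus\{u\}\bigr)\sqcup\bigl(V(L)\setminus(\{v\}\cup V(\lk(v,L)))\bigr).
\]
Its simplices are the simplices of $\mathrm{ast}(u,K)$ together with the images of the simplices of $\mathrm{ast}(v,L)$ under the vertex map that replaces each $y\in V(\lk(v,L))$ by $\psi^{-1}(y)$. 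The hypotheses $\mathrm{Ind}_K(V(\lk(u,K)))=\lk(u,K)$ and $\mathrm{Ind}_L(V(\lk(v,L)))=\lk(v,L)$ ensure that this identification creates no extra simplices. In particular, the two antistars intersect exactly in the common subcomplex $\lk(u,K)\cong\lk(v,L)$, so $K\#_{\psi}L$ is a simplicial complex triangulating $M\#N$, as already observed after the definition.

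\textbf{Step 1: $G$ preserves each antistar.} Since $u$ is fixed by $G$, for every $\mu\in G$ and $\sigma\in K$ we have $u\in\sigma$ if and only if $u=\mu(u)\in\mu(\sigma)$. Hence $\mu$ maps $\st(u,K)$, $\lk(u,K)$ and $\mathrm{ast}(u,K)$ to themselves. In the same way, $G$ preserves $\lk(v,L)$ and $\mathrm{ast}(v,L)$.

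\textbf{Step 2: define the action.} For a vertex $w$ of $K\#_{\psi}L$, set $\mu\cdot w=\mu_K(w)$ if $w$ comes from $\mathrm{ast}(u,K)$, and $\mu\cdot w=\mu_L(w)$ if $w$ comes from $\mathrm{ast}(v,L)\setminus\lk(v,L)$. The only vertices that come from both sides are the glued ones, $x\sim\psi(x)$. For these the two definitions agree precisely because $\psi(\mu(x))=\mu(\psi(x))$; this compatibility is the key point where the equivariance hypothesis on $\psi$ is used. Because both $\mu_K$ and $\mu_L$ are group actions, the resulting map $G\to\mathrm{Aut}(K\#_{\psi}L)$ is a homomorphism.

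\textbf{Step 3: simplices go to simplices.} Every simplex of $K\#_{\psi}L$ lies in one of the two antistars, and Step 1 shows that $\mu$ maps that antistar's simplices to simplices of the same antistar. So the action sends simplices to simplices.

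The step I expect to require the most care is geometric rather than combinatorial. We must show that the simplicial action just defined is the action on $M\#N$, i.e.\ that $M\#N$ carries the $G$-action obtained by gluing the actions on $M\setminus\mathrm{int}\,\st(u)$ and $N\setminus\mathrm{int}\,\st(v)$. My plan is to take this glued action as the definition of the action on $M\#N$; it is well defined because the identifying homeomorphism $|\lk(u,K)|\to|\lk(v,L)|$ is the linear extension of $\psi$, and that extension is $G$-equivariant. With this choice, the homeomorphism $|K\#_{\psi}L|\cong M\#N$ is $G$-equivariant by construction. This gives a $G$-equivariant triangulation in the sense of the paper's definition.
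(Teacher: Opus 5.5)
Your proof is correct and rests on the same ingredients as the paper's. Both arguments show that $\mu(\sigma)$ is again a simplex, using two facts: $G$ preserves each antistar because $u$ and $v$ are fixed, and the gluing of the two links is compatible with the action because $\psi\circ\mu=\mu\circ\psi$. Your observation that every simplex lies wholly in one antistar replaces the paper's case split $\sigma=\sigma'\star\sigma''$, and it supplies directly the reason the paper's concluding step (that $\mu(\sigma')\star\mu(\sigma'')$ is a simplex) holds. Your explicit definition of the action on the glued vertices and on $M\#N$ also makes precise what the paper leaves implicit.
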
  
\begin{proof}
We observe that $K\#_{\psi}L$ is a triangulation of the connected sum $M\#N$. 
It remains to show that this triangulation is $G$-equivariant. 
Let $\mu\in G$ be arbitrary.

Let $\sigma$ be a $k$-simplex of $K\#_{\psi}L$ for some $k\leq n$. 
Let $S\subseteq V(K\#_{\psi}L)$ denote the set of vertices that arise from the identification of vertices in $\lk(u,K)$ and $\lk(v,L)$; that is, for each $z\in S$, there exist $x\in \lk(u,K)$ and $y\in \lk(v,L)$ such that $z$ is the image of the identification of $x$ and $y$.

\noindent
\textbf{Case 1.} Suppose that $V(\sigma)\cap S=\emptyset$.  
Then either $\sigma\subseteq K\setminus \st(u,K)$ or $\sigma\subseteq L\setminus \st(v,L)$.  
Since both $K$ and $L$ are $G$-equivariant, $\mu(\sigma)$ is also a $k$-simplex in $K\#_{\psi}L$.

\noindent
\textbf{Case 2.} Suppose that $V(\sigma)\cap S\neq \emptyset$. If $V(\sigma)\subset S$ then $\sigma$ is an identification of a simplex, say $\sigma'$ from $\lk(u,K)$ and a simplex, say $\sigma''$ from $\lk(v,L)$. Thus, $\mu(\sigma)$ is a simplex in $K\#_{\psi}L$ coming from identifying the simplices $\mu(\sigma')$ from $\lk(u,K)$ and $\mu(\sigma'')$ from $\lk(v,L)$. 

Now, consider that $V(\sigma)\not\subset S$. 
Then $\sigma=\sigma'\star\sigma''$ where $\sigma'$ and $\sigma''$ are two simplices and  $V(\sigma)\cap S=V(\sigma'')$.

Then corresponding to $V(\sigma'')$, there exist subsets 
$S_1\subseteq V(\lk(u,K))$ and $S_2\subseteq V(\lk(v,L))$ such that the vertices of $S_1$ are identified with those of $S_2$ via $\psi$.   
Let $\tau=\mathrm{Ind}_K(S_1)$ and $\tau'=\mathrm{Ind}_L(S_2)$ be the simplices in $\lk(u,K)$ and $\lk(v,L)$, respectively.  
The identification $\tau \mathrel{\overset{\scriptscriptstyle \psi}{\sim}} \tau'$ forms $\sigma''$ in $K\#_{\psi}L$.
Since $\lk(u,K)$ and $\lk(v,L)$ are $G$-equivariant, 
$\mu(\tau)$ and $\mu(\tau')$ are simplices in $\lk(u,K)$ and $\lk(v,L)$, respectively, and their identification gives the simplex 
$\mu(\sigma'')$ in $K\#_{\psi}L$.

On the other hand, either $\sigma' \subseteq K\setminus \st(u,K)$ 
or $\sigma'\subseteq L\setminus \st(v,L)$.  
Hence, $\mu(\sigma')$ is a simplex in $K\#_{\psi}L$.  
Therefore, the join 
$\mu(\sigma') \star \mu(\sigma'')$ 
is a $k$-simplex of $K\#_{\psi}L$.

Thus, in both cases, $\mu(\sigma)$ is a simplex of $K\#_{\psi}L$.  
Hence $K\#_{\psi}L$ is $G$-equivariant.
 \end{proof}

\begin{example}
We consider the $14$-vertex triangulation $K'$ of $\RP^3$ constructed in Example \ref{rp3e}. Then, we have  
$$
V(K')=\{1,2,3,\ldots,8\}\cup\{a,b,c,d,g,h\}.
$$  
Observe that $\mathrm{Ind}_K(V(\lk(3,K)))=\lk(3,K)$. Now, take another copy of $K'$, say $L'$, with vertex set  
$$
V(L')=\{1',2',3',\ldots,8'\}\cup\{a',b',c',d',g',h'\}.$$  
Similarly, we have $\mathrm{Ind}_{K'}(V(\lk(3',K')))=\lk(3',K')$. Both triangulations $K'$ and $L'$ are $\ZZ_2^3$-equivariant triangulations of $\RP^3$.  

There exists a combinatorial isomorphism $\psi:\lk(3,K)\to \lk(3',L')$ defined by $\psi(x)=x'$ for each $x\in \lk(3,K)$. Moreover, $\mu(\psi(x))=\psi(\mu(x))$ for all $x\in \lk(3,K)$. Thus, by Theorem \ref{connectedsum}, the connected sum $K'\#_{\psi}L'$ is a $\ZZ_2^3$-equivariant triangulation of $\RP^3\#\RP^3$ with $20$ vertices. Further, we observe that $1,2,4$ are fixed points in $K'\#_{\psi}L'$ and $\lk(1,K'\#_{\psi}L')=B_{\{5,8,7,6\}}(a;c)$,  $\lk(2,K'\#_{\psi}L')=B_{\{5,7,8,6\}}(b;d)$, and $\lk(4,K'\#_{\psi}L')=B_{\{a,b,c,d\}}(g;h)$. Thus, we can remove these fixed points and retriangulate $K'\#_{\psi}L'$ to obtain a new $\ZZ_2^3$-equivariant triangulation of $\RP^3\#\RP^3$ with $17$ vertices.  This triangulation is same as in  Theorem \ref{three}. 
   
\end{example}

\begin{remark}\label{Lutz}
  Let $N$ be a triangulation of $\mathbb{RP}^3$. Then, from \cite{LS}, we have the following lower bounds:
$g_2(N)\geq 17$ if $f_0(N)=11$,
$g_2(N)\geq 22$ if $f_0(N)=12$,
$g_2(N)\geq 21$ if $f_0(N)=13$,
$g_2(N)\geq 23$ if $f_0(N)=14$,
$g_2(N)\geq 22$ if $f_0(N)=15$,
$g_2(N)\geq 19$ if $f_0(N)=16$, and
$g_2(N)\geq 17$ if $f_0(N)=17$.
  \end{remark}

Let $M$ be a $3$-manifold. Let $(f_0(M);d(M))$ denote an admissible pair for $M$, meaning that there exists a triangulation $K$ of $M$ with $f_0(M)$ vertices and a vertex $u \in K$ such that $f_0(\lk(u,K))=d(M)$ and $\mathrm{Ind}_K(V(\lk(u,K))) = \lk(u,K)$.
\begin{lemma}\label{admissible}
    $(f_0(\RP^3);d(\RP^3))\notin \{(11;6),(11;8),(11;10), (12;6),(12;8),(12;10),(13;8),$ $(13;10),(13;12),(14;10),(14;12),(15;12),(15;14),(16;14),(17;16)\}.$
\end{lemma}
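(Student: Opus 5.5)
The plan is to turn a hypothetical admissible pair for $\RP^3$ into a small triangulation of $\RP^3\#\RP^3$ via the generalized star-connected sum, and then contradict known lower bounds. Suppose $(f_0;d)$ is one of the listed pairs and is admissible. Then there is a triangulation $K$ of $\RP^3$ with $f_0$ vertices and a vertex $u$ such that $f_0(\lk(u,K))=d$ and $\mathrm{Ind}_K(V(\lk(u,K)))=\lk(u,K)$. Let $L$ be a disjoint copy of $K$, with $v$ the copy of $u$, and let $\psi:\lk(u,K)\to\lk(v,L)$ be the copying map. The hypotheses of the generalized star-connected sum hold, so $K\#_{\psi}L$ is a triangulation of $\RP^3\#\RP^3$.

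By the vertex count in the proof of Lemma~\ref{g2sum},
$$f_0(K\#_{\psi}L)=2f_0-d-2.$$
By the corollary to Lemma~\ref{g2sum},
$$g_2(K\#_{\psi}L)=2g_2(K)-d+4.$$
Running through the list gives $2f_0-d-2$ as follows:
\begin{itemize}
\item $10$ for $(11;10)$;
\item $12$ for $(11;8),(12;10),(13;12)$;
\item $14$ for $(11;6),(12;8),(13;10),(14;12),(15;14)$;
\item $16$ for $(12;6),(13;8),(14;10),(15;12),(16;14),(17;16)$.
\end{itemize}
So the listed pairs are exactly those whose sum has at most $16$ vertices. This explains why only these pairs appear, and it organizes the case split.

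\emph{Cases with $10$, $12$ or $14$ vertices.} The minimal triangulation of $\RP^3\#\RP^3$ has $15$ vertices, as recalled at the start of Section~\ref{sec:connectedsum}. Hence a $\le 14$-vertex triangulation is impossible, and all such pairs are excluded immediately. As a consistency check, $d$ is always at most $f_0-1$. The pairs with $d=f_0-1$ (every other vertex lies in the link) reduce to these cases or to the next.

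\emph{Cases with $16$ vertices.} Here the counting no longer suffices, and I will use $g_2$. Remark~\ref{Lutz} gives a lower bound $b(f_0)$ for $g_2(K)$ depending only on $f_0$. Hence
$$g_2(K\#_{\psi}L)\geq 2b(f_0)-d+4.$$
For example, $(12;6)$ gives at least $42$ and $(17;16)$ gives at least $22$. I then compare with Lutz's classification of triangulations of $\RP^3\#\RP^3$ in \cite{LS}, whose minimal $g$-vectors include $(13,34)$ and $(10,36)$, together with the lower bound on $g_2$ for $16$-vertex triangulations recorded there.

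This comparison is the main obstacle. The bound $2b(f_0)-d+4$ is weakest for large $d$, e.g.\ $(16;14)$ gives $38-14+4=28$ and $(17;16)$ gives $22$, so a generic bound on $g_2$ for $16$-vertex $\RP^3\#\RP^3$ may not suffice. For those pairs I would first try a local argument instead. When $d=f_0-2$ or $d=f_0-1$, the antistar of $u$ has at most one vertex outside $\lk(u,K)$. With the induced-link condition, this forces $K$ to be a cone or suspension-like over a $2$-sphere, which cannot triangulate $\RP^3$, since it would be simply connected or a sphere. Alternatively, I would use the enumeration in \cite{LS} of all $\RP^3$ triangulations with $16$ and $17$ vertices and small $g_2$, and check their vertex links directly.
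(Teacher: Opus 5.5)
Your doubling construction is sound. With $L$ a copy of $K$, $K\#_{\psi}L$ is a triangulation of $\RP^3\#\RP^3$ with $2f_0-d-2$ vertices and $g_2=2g_2(K)-d+4$. Your local argument also correctly disposes of $(16;14)$ and $(17;16)$: for $d=f_0-1$ inducedness forces $K=\st(u,K)$, a ball, and for $d=f_0-2$ it forces $K=\lk(u,K)\star\{u,w\}$, a $3$-sphere.

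The gap is in the remaining sixteen-vertex cases $(12;6)$, $(13;8)$, $(14;10)$, $(15;12)$, and your proposed comparison cannot close it. The inequality $g_2(K\#_{\psi}L)\ge 2b(f_0)-d+4$ is a \emph{lower} bound. Lutz's minimal $g$-vectors, and any lower bound on $g_2$ for $16$-vertex triangulations of $\RP^3\#\RP^3$, are also lower bounds, so comparing them can never give a contradiction. The vertex count does not help either, since $16$-vertex triangulations of $\RP^3\#\RP^3$ do exist (stellarly subdivide a facet of a $15$-vertex one). The contradiction has to come from an upper bound forced by the induced link, not from known facts about $\RP^3\#\RP^3$.

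That upper bound is what the induced-link hypothesis supplies if you count directly in $K$, as the paper does. The $d$ link vertices span only the $3d-6$ edges of the $2$-sphere $\lk(u,K)$, and $u$ is not adjacent to the other $f_0-d-1$ vertices, so
\[
g_2(K)\le \frac{f_0(f_0-11)}{2}-\frac{d(d-9)}{2}+5 .
\]
For $(12;6)$, $(14;10)$ and $(15;12)$ this gives $20<22$, $21<23$ and $17<22$, contradicting Remark~\ref{Lutz}.

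For $(13;8)$ it only gives $22\ge 21$, so one more step is needed. Inducedness implies the link condition $\lk(u)\cap\lk(y)=\lk(uy)$ for every $y\in\lk(u,K)$, so the edge $uy$ can be contracted. Choose $y$ of degree at least $4$ in the $8$-vertex sphere $\lk(u,K)$. The contraction lowers $g_2$ by $\deg_{\lk(u)}(y)-3\ge 1$, producing a $12$-vertex triangulation of $\RP^3$ with $g_2\le 21<22$.

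The same inequality gives $g_2(K)\le 15$ for every pair whose double has at most $14$ vertices, below the bounds of Remark~\ref{Lutz}. So your appeal to a $15$-vertex lower bound for $\RP^3\#\RP^3$ is unnecessary; the paper neither proves nor precisely cites such a bound.
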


\begin{proof}
Let $(f_0;d)$ be an admissible pair for $\mathbb{RP}^3$. Then there exists a triangulation $K$ of $\mathbb{RP}^3$ such that $f_0(K)=f_0$, and there exists a vertex $u\in K$ with $f_0(\lk(u,K))=d$ and $\mathrm{Ind}_K(V(\lk(u,K)))=\lk(u,K)$. Since, apart from the $f_1(\lk(u,K))$ edges, there is no edge in $\mathrm{Ind}_K(V(\lk(u,K)))$, there are at least $\binom{d}{2}-f_1(\lk(u,K))=\binom{d}{2}-3f_0(\lk(u,K))+6$ missing edges in $K$.

Additionally, there are $f_0-d-1$ further missing edges. Therefore,
$f_1(K)\leq \binom{f_0}{2}-\big(\binom{d}{2}-3f_0(\lk(u,K))+6\big)-(f_0-d-1)$.
This implies that
$f_1(K)\leq \frac{f_0(f_0-3)}{2}-\frac{d(d-9)}{2}-5$.

Therefore,
$g_2(K)=f_1(K)-4f_0(K)+10\leq \frac{f_0(f_0-3)}{2}-\frac{d(d-9)}{2}-5-4f_0+10$.
This further simplifies to
$g_2(K)\leq \frac{f_0(f_0-11)}{2}-\frac{d(d-9)}{2}+5$.

We find that $g_2(K)<17$ for all admissible cases of $\mathbb{RP}^3$ from $\{(11;6),(11;8),(11;10),(12;8),$ $(12;10),(13;10),(13;12),(14;10),(14;12),(15;14),(16;14),(17;16)\}$, which contradicts Remark \ref{Lutz}. Further, in the cases $(12,6)$ and $(15,12)$, we obtain $g_2\leq 17$ in both instances, which again contradicts Remark \ref{Lutz}.

The only remaining case is $(13;8)$. In this case, $g_2\leq 22$, and for any vertex $x\in \lk(u,K)$, we have $\lk(x)\cap\lk(u)-\lk(ux)=\emptyset$. Thus, we can contract the edge $ux\in K$. Choose a vertex $y\in \lk(u,K)$ whose degree is at least $4$. Then the $g_2$ of the new triangulation of $\mathbb{RP}^3$ obtained by contracting the edge $uy\in K$ decreases by at least one and is less than or equal to $21$. This implies that there exists a triangulation of $\mathbb{RP}^3$ with $12$ vertices and $g_2\leq 21$, which contradicts Remark \ref{Lutz}.  
\end{proof}

\begin{theorem}
There does not exist a $\mathbb{Z}_2^3$-equivariant triangulation of $\mathbb{RP}^3\#\mathbb{RP}^3$ on at most $16$ vertices that can be obtained, via Theorem \ref{connectedsum}, from two $\mathbb{Z}_2^3$-equivariant triangulations of $\mathbb{RP}^3$.
\end{theorem}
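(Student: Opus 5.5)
Suppose $K$ and $L$ are $\mathbb{Z}_2^3$-equivariant triangulations of $\RP^3$ satisfying the hypotheses of Theorem \ref{connectedsum}. The plan is a counting argument on the vertex number of $K\#_{\psi}L$, reduced by Lemma \ref{admissible} to a short list of cases.

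\textbf{Step 1: the count.} Let $u\in K$ and $v\in L$ be the fixed vertices used in the construction, and put $d=f_0(\lk(u,K))=f_0(\lk(v,L))$. The vertex count in the proof of Lemma \ref{g2sum} gives
$$f_0(K\#_{\psi}L)=f_0(K)+f_0(L)-d-2 .$$
Both pairs $(f_0(K);d)$ and $(f_0(L);d)$ are admissible for $\RP^3$ in the sense defined before Lemma \ref{admissible}, since $\mathrm{Ind}_K(V(\lk(u,K)))=\lk(u,K)$ and likewise for $L$.

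\textbf{Step 2: constraints on $f_0$ and $d$.} Since $u$ is fixed by $\mathbb{Z}_2^3$, the link $\lk(u,K)$ is a $\mathbb{Z}_2^3$-equivariant triangulation of $\Sp^2$. By Lemma \ref{lem3}, $d$ is even and $d\ge 6$. The minimal triangulation of $\RP^3$ has $11$ vertices, so $f_0(K),f_0(L)\ge 11$. Moreover $d\le f_0(K)-1$, and in fact $d\le f_0(K)-2$: if $u$ were joined to every other vertex, the remaining vertices would all lie in the link, and then $\mathrm{Ind}_K$ would force $K$ to be the cone $\st(u,K)$, a ball.

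\textbf{Step 3: reduce to finitely many cases.} The target $f_0(K\#_{\psi}L)\le 16$ becomes $f_0(K)+f_0(L)\le d+18$. Write $f_0(K)=d+a$ and $f_0(L)=d+b$ with $a,b\ge 2$. Then the condition reads $d+a+b\le 18$. Together with $f_0(K),f_0(L)\ge 11$ and $d\ge 6$ even, this leaves only finitely many pairs $(f_0(K);d)$, and each must be admissible. I would list these pairs and check that every one appears among the exclusions of Lemma \ref{admissible}.

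\textbf{Step 4: the check.} For example, $d=6$ forces $a+b\le 12$ with $a,b\ge 5$, giving $(11;6),(12;6),(13;6)$. I expect this bookkeeping to be the main obstacle.
\begin{itemize}
\item $(13;6)$ with $(11;6)$: the partner $(11;6)$ is excluded by Lemma \ref{admissible}, so this combination fails.
\item $d=8$: $a+b\le 10$ with $a,b\ge 3$, so each of $f_0(K),f_0(L)$ lies in $\{11,\dots,15\}$. The small members are excluded.
\item Pairs such as $(14;8)$ or $(15;8)$ are not excluded, but their partner then has $f_0=11$ or $12$, i.e.\ $(11;8)$ or $(12;8)$, which are excluded.
\end{itemize}
In general, the argument should be organised as follows. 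The smaller of $f_0(K),f_0(L)$ has $a\le (18-d)/2$, i.e.\ $f_0\le (d+18)/2$. Then verify that every pair $(f_0;d)$ with $11\le f_0\le (d+18)/2$, $f_0\ge d+2$, and $d$ even is in the excluded list. For $d=6,8,10,12,14,16$ this gives $f_0\le 12,13,14,15,16,17$ respectively. The excluded list covers exactly these ranges: $(11,12;6)$, $(11,12,13;8)$, $(11$–$14;10)$, $(13$–$15;12)$, $(15,16;14)$, $(17;16)$. Here I should double-check that the lower limits $f_0\ge d+2$ and $f_0\ge 11$ match, and that $d\ge 18$ is impossible because it would force $f_0\le 18$ while $f_0\ge d+2\ge 20$.

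Hence no admissible combination yields at most $16$ vertices.
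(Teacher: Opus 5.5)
Your proposal is correct and follows the paper's proof. Both arguments use the vertex count $f_0(K\#_{\psi}L)=f_0(K)+f_0(L)-d-2$, take $d$ even with $d\geq 6$ from Lemma~\ref{lem3}, and show that every configuration forces an admissible pair excluded by Lemma~\ref{admissible}. The only difference is bookkeeping: the paper lists all pairs $(f_0(K),f_0(L))_d$ separately for $15$ and $16$ vertices, whereas you bound the smaller of $f_0(K),f_0(L)$ by $(d+18)/2$. Your bound matches the lemma's list exactly, and it also covers at most $14$ vertices, a case the paper leaves implicit.
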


\begin{proof}

Let $K$ and $L$ be two $\mathbb{Z}_2^3$-equivariant triangulations of $\mathbb{RP}^3$ with fixed vertices $u$ and $v$, respectively. Suppose that $\mathrm{Ind}_K(V(\lk(u,K)))=\lk(u,K)$ and $\mathrm{Ind}_L(V(\lk(v,L)))=\lk(v,L)$, and that there exists an isomorphism $\psi:\lk(u,K)\to\lk(v,L)$.

\textbf{Case (i):}
Assume that $K\#_{\psi}L$ is a $\ZZ_2^3$-equivariant triangulation of $\mathbb{RP}^3\#\mathbb{RP}^3$ on $15$ vertices.

Since $\lk(u)$ is a $\mathbb{Z}_2^3$-equivariant triangulation of the $2$-sphere, $d(u)$ must be even and $d(u)=d(v)$. If $d(u)\geq 16$, then $f_0(K),f_0(L)\geq 17$. This would imply
$f_0(K\#_{\psi}L)=f_0(K)+f_0(L)-d(u)-2\geq 16$,
a contradiction. Thus, the possible values of $d(u)$ are $6,8,10,12,$ or $14$.

For $d(u)\in{6,8,10,12,14}$, let $(f_0(K),f_0(L))_{d(u)}$ denote the pair corresponding to the degree $d(u)=d(v)$. Since
$f_0(K)+f_0(L)-d(u)-2=f_0(K\#_{\psi}L)=15$,
an analysis of each value of $d(u)$ yields the following possibilities:
$(11,12)_6,(12,13)_8,(11,14)_8,(11,16)_{10},(12,15)_{10},$ $(13,14)_{10},(13,16)_{12},(14,15)_{12},(15,16)_{14}$.
These possibilities imply the admissibility of the pairs
$(f_0(\mathbb{RP}^3);d(\mathbb{RP}^3))\in\{(11;6),(11;8),(11;10),(12;8),(12;10),(13;10),$ $(13;12),$ $(14;12),(15;14)\}$,
which contradicts Lemma \ref{admissible}.

\textbf{Case (ii):}
Suppose that $K\#_{\psi}L$ is a $\mathbb{Z}_2^3$-equivariant triangulation of $\mathbb{RP}^3\#\mathbb{RP}^3$ on $16$ vertices.

If $d(u)\geq 17$, then $f_0(K),f_0(L)\geq 18$. This would imply
$f_0(K\#_{\psi}L)=f_0(K)+f_0(L)-d(u)-2\geq 17$,
a contradiction. Thus, the possible values of $d(u)$ are $6,8,10,12,14,$ or $16$.

Since
$f_0(K)+f_0(L)-d(u)-2=f_0(K\#_{\psi}L)=16$,
an analysis of each value of $d(u)$ yields the following possibilities:
$(11,13)_6,(12,12)_6,(11,15)_8,(12,14)_8,(13,13)_8,(11,17)_{10},(12,16)_{10},$ $(13,15)_{10},(14,14)_{10},(13,17)_{12},(14,16)_{12},(15,15)_{12},(15,17)_{14},(16,16)_{14},(17,17)_{16}$.
These possibilities imply the admissibility of the pairs
$(f_0(\mathbb{RP}^3);d(\mathbb{RP}^3))\in\{(11;6),(11;8),(11;10),$ $(12;6),(12;8),(12;10),(13;8),(13;10),(13;12),(14;10),(14;12),(15;12),(15;14),(16;14),$ \\ $(17;16)\}$,
which contradicts Lemma \ref{admissible}.
\end{proof}

\begin{question}
    Is their a $\ZZ_2^3$-equivariant triangulation of $\RP^3\#\RP^3$ with $15$  or $16$ vertices?
\end{question}

\section{Equivariant Triangulation on $\mathbb{RP}^4$}\label{sec:RP4}
In this section, we study equivariant triangulations of $\RP^4$ with at most $17$ vertices. We note that any triangulation of $\RP^4$ contains at least 16 vertices.

\begin{lemma}\label{lem:nointerior}
   Let $X$ be a $\ZZ_2^4$-equivariant triangulation of $\RP^4$ with less than 18 vertices. Let $\pi:|X|\rightarrow \D^4$ be the orbit map. There is no face of dimension three or more in $\D^4$ whose interior conatins a point $x$ such that $\pi^{-1}(x) \cap V(X) \neq \emptyset$. 
\end{lemma}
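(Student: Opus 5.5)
Suppose, for contradiction, that $x$ lies in the relative interior of a face $F\subseteq \D^4$ with $\dim F\geq 3$ and that $\pi^{-1}(x)$ contains a vertex of $X$. By Remark~\ref{remark}, $\pi^{-1}(x)\subset V(X)$ and it has exactly $2^{\dim F}$ elements. If $\dim F=4$ this already gives $16$ non-fixed vertices. If $\dim F=3$, say $F=F_i$, it gives $8$ vertices, all lying in $X_i$. The plan is to combine this count with the lower bounds coming from the vertex stars, as was done for $\RP^3$ in Theorem~\ref{thm3}.

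First I would use the balls around the vertices of $\D^4$. For each $j$, the complex $D_j=\{\sigma\in X: e_j\in|\sigma|\}$ is an equivariant $4$-ball, and $\partial D_j$ is a $\ZZ_2^4$-equivariant triangulation of $\Sp^3$. Lemma~\ref{lem3} then says $\partial D_j$ has at least $8$ vertices, and an even number of them. Lemma~\ref{lem4} says these vertices avoid $X_j$; that is, they project into the faces of $\D^4$ containing $e_j$ but not into $F_j$.

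\textbf{Case $\dim F=4$.} There are $16$ interior vertices, so at most $1$ further vertex, and hence at most one fixed vertex $e_j$. Take an index $k$ with $e_k$ not a vertex. Then $\partial D_k$ still needs at least $8$ vertices, and these cannot be interior ones: the open star of $e_k$ meets only a neighbourhood of $e_k$, not the orbit of $x$, and it has to be separated from $x$ by vertices on the boundary of the orbit space. Making this last point precise is the delicate part. The cleanest route I see is Lemma~\ref{lem5}: if no vertices lie over the boundary except possibly one fixed point, then for an edge $e_ke_l$ with no vertices over it, some $2$-face containing it must carry vertices in its relative interior. Such vertices are not over $x$ and number at least $4$, which pushes the total past $17$. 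Note $16+4>17$.

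\textbf{Case $\dim F=3$.} Here the $8$ vertices over $x$ lie in $X_i$ and none lie over $e_i$'s neighbourhood. Since $\partial D_i$ avoids $X_i$ and has at least $8$ vertices, I would first record that it needs at least $8$ additional vertices, projecting to edges or $2$-faces through $e_i$. Those over edges come in pairs and those over $2$-faces in quadruples. This gives $16$ vertices so far, leaving at most $1$ more, namely possibly a single fixed vertex. I would then count vertices lying in each $X_j$, $j\neq i$, as in the proof of Claim~\ref{claim}. The $8$ vertices over $x$ lie in no $X_j$ with $j\neq i$ (for $x$ interior to $F_i$). The vertices of $\partial D_i$ fill some $X_j$ heavily, because a vertex over the edge $e_ie_l$ lies in the two or three $X_j$ with $j\notin\{i,l\}$. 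The aim is then to find a $j\neq i$ such that $\partial D_j$ has fewer than $8$ vertices available outside $X_j$, contradicting Lemma~\ref{lem3}. The vertices available outside $X_j$ are those over $x$, if $j\neq i$, plus the rest. If that count alone does not close the case, I would apply Lemma~\ref{twononzero} and Corollary~\ref{8vertex}: with exactly $8$ vertices, $\partial D_i$ is octahedral, so its vertices lie over edges (type (III)) or over pairs of $2$-faces. This determines exactly which $X_j$ receive vertices.

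The main obstacle I expect is justifying that vertices over $x$ cannot serve as vertices of $\partial D_j$. Without this, the counts do not separate. I would handle it through Lemma~\ref{lem4}, when $x\in F_j$, and otherwise through a Lemma~\ref{lem5}-type connectivity argument. A fallback is a $g_2$ bound: apply Lemma~\ref{missingedges} to the interior orbit, which gives at least $2^{3}(2^4-5)=88$ missing edges among those $16$ vertices (for $k=4$, the count $2^{k-1}(2^k-k-1)$ is $8\cdot 11=88$). Combining this with $g_2\geq 0$ for $f_0\leq 17$ should yield a contradiction, as in Lemma~\ref{lem6}.
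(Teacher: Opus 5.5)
Your case $\dim F=4$ works, and by a different route from the paper. Once $16$ vertices lie over $x$ and $f_0(X)\le 17$, Lemma~\ref{lem1} forces any remaining vertex to be fixed. Then no vertex lies over any open edge, and Lemma~\ref{lem5} produces a vertex over $F\setminus V(F)$ for some $2$-face $F$, which is impossible. The paper argues the other way round: $\partial D_i$ needs at least $8$ vertices, so it \emph{must} contain vertices over $x$, hence all $16$, and Lemma~\ref{lem6} rules that out. So your heuristic that vertices over the interior cannot be vertices of $\partial D_k$ is false in general and should be dropped; Lemma~\ref{lem5} does not prove it, it gives the contradiction directly. Your fallback also applies Lemma~\ref{missingedges}, which is a statement about equivariant spheres, to $X$ itself; that is not justified as stated.

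The genuine gap is in the case $\dim F=3$, and it comes from the same heuristic. You want some $j\neq i$ for which fewer than $8$ vertices outside $X_j$ are available to $\partial D_j$. But $x$ is interior to $F_i$, so $x\notin F_j$ for every $j\neq i$, and the $8$ vertices over $x$ are always available. Lemma~\ref{lem4} keeps them out of $\partial D_i$ only, and no connectivity argument keeps them out of $\partial D_j$. In fact your own count gives $\mathrm{Card}(V(\partial D_i))=8$, with the non-fixed vertices being exactly these together with the $8$ over $x$. So the only vertices outside $X_j$ are the orbit over $x$ and at most $4$ vertices of $\partial D_i$, and $\partial D_j$ is \emph{forced} to contain the orbit over $x$.

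The missing idea is to get the contradiction from the combinatorics of the equivariant $3$-sphere $\partial D_j$:
\begin{itemize}
\item $V(\partial D_j)=\pi^{-1}(x)$ is excluded by Lemma~\ref{twononzero}, since these vertices have three nonzero coordinates in the chart at $e_j$.
\item If $V(\partial D_j)$ is $\pi^{-1}(x)$ plus a $2$-element orbit, Lemma~\ref{missingedges} with $k=3$ gives $16$ missing edges. Then $f_1(\partial D_j)\le 45-16=29<30$, contradicting $g_2(\partial D_j)\ge 0$.
\end{itemize}
Choosing $j$ so that $\partial D_i$ has vertices over the open edge $(e_i,e_j)$, this disposes of $\partial D_i$ of type (II) or (III) in Corollary~\ref{8vertex}. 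For type (I), every $j\neq i$ has $12$ available vertices, and this bound does not close the case. The paper needs a separate link argument inside $\partial D_0$ there. Alternatively, apply Lemma~\ref{lem5} to an edge $e_ae_c\subset F_i$ whose endpoints lie in different triangles of the complementary pair. No vertex lies over any $2$-face through $e_ae_c$, which gives the contradiction. A global $g_2(X)\ge 0$ count, as in your fallback, cannot substitute here: $8$ vertices over $x$ give far too few missing edges.
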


\begin{proof}
Let the vertex set of $\D^4$ be $\{e_0, e_1, e_2, e_3, e_4\}$. Suppose that the interior of $\D^4$ contains a point $x$ such that $\pi^{-1}(x)$ contains a vertex, say $x'$ in $X$. Then all four coordinates of $x'$ must be non-zero. Since $\ZZ_2^4$ acts on $x'\in V(X)$, there are at least $16$ vertices in $X$ with all coordinates non-zero.

Define $D_i = \{\sigma \in X : e_i \in |\sigma|\}$ for $0 \leq i \leq 4$. Then each $D_i$ is a $4$-dimensional ball, and $\partial D_i$ is a subcomplex of $X$ which forms a $\ZZ_2^4$-equivariant triangulation of the $3$-sphere. 
Since $X$ contains at most $17$ vertices, $\partial D_i$ must contain $\mu(x')$ for some $\mu\in \ZZ_2^4$. This implies that $\partial D_i$ contains a vertex with all four coourdinates being non-zero. Since $\partial D_i$ is a $\ZZ_2^4$-equivariant triangulation of a $3$-sphere, $\partial D_i$ must contain at least $16$ vertices with all the coordinates non-zero. On the other hand $\partial D_i$ contains even number of vertices, $\mathrm{Card}(V(\partial D_i))=16$. From Lemma~\ref{lem6}, this is not possible.

Now, we consider the interior of a $3$-dimensional face of $\D^4$, say $\tau$, which contains a point $y$ such that $\pi^{-1}(y)$ contains a vertex in $X$. Without loss of generality, assume that $V(\tau) = \{e_0, e_1, e_2, e_3\}$. Let $z \in \pi^{-1}(y)$ be a vertex in $X$. Then three coordinates of $z$ must be non-zero. Since $\ZZ_2^4$ acts on the vertices of $X$, there are at least $8$ vertices in $X$ with exactly three non-zero coordinates.

If $V(\partial D_j) = \pi^{-1}(y)$ for some $0 \leq j \leq 3$, then $\partial D_j$ is a $\ZZ_2^3$-equivariant triangulation on $8$ vertices whose every vertex has three non-zero coordinates. This contradicts Corollary~\ref{8vertex}. Thus, $\mathrm{Card}(V(\partial D_j)) \geq 10$ for each $0 \leq j \leq 3$. Since $X$ contains at most $17$ vertices, $\mathrm{Card}(V(\partial D_4)) \geq 8$, and from Lemma~\ref{lem4}, $V(\partial D_4) \cap \pi^{-1}(\tau) = \emptyset$, we have $\mathrm{Card}(V(\partial D_4)) = 8$. If $\partial D_4$ is of type Corollary~\ref{8vertex}~$(II)$ or $(III)$, then there exists a $k \in \{0,1,2,3\}$ such that $\mathrm{Card}(V(\partial D_k)) = 10$. By Lemma~\ref{missingedges}, $\partial D_k$ contains at least $16$ missing edges. Hence, $f_1(\partial D_k) \leq 29$. Since $\partial D_k $ is a triangulated $3$-sphere on $10$ vertices, we have $g_2(\partial D_k) \geq 0$, i.e., $4f_0(\partial D_k) - 10 \leq f_1(\partial D_k)$. It follows that $30 \leq f_1(\partial D_k)$, a contradiction.

Thus, $\partial D_4$ must be of type Corollary~\ref{8vertex}~$(I)$. This implies that $\pi(V(\partial D_4)) = \{a, b\}$, where $a$ and $b$ lie in the interiors of two complementary triangles in $\D^4$ containing the vertex $e_4$. Without loss of generality, assume that $a$ lies in the interior of $e_0e_1e_4$ and $b$ belongs to the interior of $e_2e_3e_4$, such that $\pi^{-1}(a)$ and $\pi^{-1}(b)$ contribute four vertices each to $V(\partial D_4)$.
Since $\mathrm{Card}(V(\partial D_j)) \geq 10$ for each $0 \leq j \leq 3$, we conclude that $\pi^{-1}(a)$ contributes four vertices to $\partial D_0$ and the same four vertices to $\partial D_1$. Similarly, $\pi^{-1}(b)$ contributes four vertices to $\partial D_2$ and $\partial D_3$. Thus, $\mathrm{Card}(V(\partial D_j)) = 12$ for every $0 \leq j \leq 3$. Let $V(\partial D_0) = S \cup T$, where $S$ contains all $8$ vertices with three non-zero coordinates and $T$ contains four vertices with only two non-zero coordinates.

Now, $z \in S$ can be joined only to the vertices of $S$ that differ in exactly one coordinate, by Lemma~\ref{missingedges}. Hence, $z$ can be adjacent to at most three vertices in $S$ and at most four vertices in $T$. Let $p, q, r \in S$ be the possible adjacent vertices of $z$ from $S$. Suppose that $p$ is adjacent to $z$ in $\partial D_0$. Then $\lk(p, \lk(z, \partial D_0))$ is a cycle of length at least three. Since $q$ and $r$ differ from $p$ in two coordinates, they cannot be joined with $p$ in $\partial D_0$. Therefore, $\lk(p, \lk(z, \partial D_0))$ would be a cycle consisting of vertices from $T$. This is not possible, as $T$ contains two pairs of antipodal vertices in $\partial D_0$. Hence, $z$ cannot be joined with $p$, $q$, or $r$ in $\partial D_0$. On the other hand, since $T$ contains two pairs of antipodal vertices in $\partial D_0$, the possibility $V(\lk(z, \partial D_0)) = T$ is also ruled out.
Therefore, $\mathrm{Card}(V(\partial D_0)) = 12$ is not possible, and thus $\partial D_4$ cannot be of type Corollary~\ref{8vertex}~$(I)$. Consequently, there is no $3$-dimensional face of $\D^4$ whose interior contains a point $x$ such that $\pi^{-1}(x)$ contains a vertex in the triangulation $X$.
\end{proof}

\begin{remark}
 The previous lemma describes the vertex distribution of a $\ZZ_2^4$-equivariant triangulation of $\RP^4$ with at most $17$ vertices and suggests that these vertices arise from the interiors of either triangles or edges of $\D^4$.
\end{remark}

\begin{lemma}\label{atmost9}
Let $X$ be a $\ZZ_2^4$-equivariant triangulation of $\RP^4$.  If $f_0(X)\leq 17$ then for any $3$-simplex $\sigma$ in $\D^4$, $\pi^{-1}(\sigma)$ contains at most $9$ vertices. Moreover, for $x\in V(\D^4)\setminus V(\sigma)$, $Card(V(\partial D_x))=8$. 
\end{lemma}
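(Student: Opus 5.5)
By Lemma~\ref{lem:nointerior}, every vertex of $X$ lies over a vertex of $\D^4$ or over the relative interior of an edge or a triangle of $\D^4$. By Remark~\ref{remark}, these fibres contain $1$, $2$ or $4$ vertices, respectively. The plan is to count vertices fibre by fibre and compare the count against the links $\partial D_i$ of the five vertices of $\D^4$.

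Fix a $3$-simplex $\sigma$ of $\D^4$ and let $x$ be the vertex of $\D^4$ not in $\sigma$. First I would show that the sets $\pi^{-1}(\sigma)\cap V(X)$ and $V(\partial D_x)$ are disjoint. This follows from Lemma~\ref{lem4}: $\pi^{-1}(\sigma)=X_x$ is the facet preimage opposite $x$, and Lemma~\ref{lem4} says no vertex of a top simplex containing $x$ lies in $X_x$. Since $\partial D_x$ is a $\ZZ_2^4$-equivariant triangulation of $\Sp^3$, Lemma~\ref{lem3} gives $\mathrm{Card}(V(\partial D_x))\geq 8$. Therefore
\[
\mathrm{Card}(\pi^{-1}(\sigma)\cap V(X))\leq f_0(X)-\mathrm{Card}(V(\partial D_x))\leq 17-8=9.
\]
The vertex $x$ itself, if it is a vertex of $X$, only improves this bound. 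This proves the first assertion.

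For the second assertion, suppose $\mathrm{Card}(V(\partial D_x))\geq 10$; by Lemma~\ref{lem3} this cardinality is even. Then $\pi^{-1}(\sigma)$ contains at most $7$ vertices. I would show that this is too few to supply the four links $\partial D_y$ for $y\in V(\sigma)$. Each of these is also an equivariant triangulated $\Sp^3$ and so has at least $8$ vertices. The vertices of $\partial D_y$ lie over the relative interiors of faces containing $y$ but not the opposite facet. Hence they come from fibres over edges $yz$ and triangles $yzw$, where $z$ and $w$ may include $x$.

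The fibres lying in $\pi^{-1}(\sigma)$ are those over edges and triangles of $\sigma$, and these are limited to $7$ vertices in total. A triangle fibre has $4$ vertices, so at most one triangle of $\sigma$ carries vertices, and then at most one edge of $\sigma$ carries vertices as well. The remaining vertices of the $\partial D_y$ must come from fibres over edges $yx$ and triangles $yzx$. All of these fibres lie in $\partial D_x$ or near it, and they are counted in $f_0(X)\leq 17$ together with $\mathrm{Card}(V(\partial D_x))\geq 10$.

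The key tool in this step is a covering requirement. For each $\partial D_y$ and each of the four coordinate directions, some vertex of $\partial D_y$ must have a nonzero coordinate in that direction; this is the argument from the proof of Lemma~\ref{twononzero}. On top of this, each $\partial D_y$ must have at least $8$ vertices. I would go through the finitely many distributions of fibres allowed by $f_0\leq 17$: how many of the $\leq 7$ vertices over $\sigma$ are on edges versus triangles, and which faces through $x$ carry fibres.

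I expect this case analysis to be the main obstacle. In it I would also use the following facts:
\begin{itemize}
\item Lemma~\ref{lem5}, which forces vertices over a $2$-face through any edge that carries no vertices;
\item Corollary~\ref{8vertex}, which pins down any $\partial D_y$ with exactly $8$ vertices to types (I)--(III);
\item the $g_2\geq0$ and missing-edge count used in the proof of Lemma~\ref{lem:nointerior}, which rules out $10$-vertex links containing too many vertices with two nonzero coordinates.
\end{itemize}
Together these should show that the total number of vertices exceeds $17$ unless $\mathrm{Card}(V(\partial D_x))=8$.
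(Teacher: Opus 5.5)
Your argument for the first assertion is correct and is the paper's argument. Lemma~\ref{lem4} makes $\pi^{-1}(\sigma)\cap V(X)$ disjoint from $V(\partial D_x)$, and Lemma~\ref{lem3} gives $\mathrm{Card}(V(\partial D_x))\ge 8$.

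For the second assertion there is a genuine gap. You rightly note that $\mathrm{Card}(V(\partial D_x))\ge 10$ only leaves at most $7$ vertices over $\sigma$, and that this is not yet a contradiction. The paper's one-line count $\mathrm{Card}(V(\pi^{-1}(\sigma))\cup V(\partial D_x))\ge 18$ tacitly assumes at least $8$ vertices over $\sigma$. That holds where the lemma is later applied, but it is not proved in general.

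You then only announce a case analysis, and the tools you list cannot complete it. Take $x=e_0$ and $\sigma=e_1e_2e_3e_4$. Put vertex orbits over interior points of the triangles $e_1e_2e_3$, $e_0e_2e_4$, $e_0e_3e_4$ and of the edges $e_0e_1$, $e_1e_4$, giving $16$ vertices. This distribution passes every test you list:
\begin{itemize}
\item $\partial D_0$ and $\partial D_4$ get $10$ vertices, $\partial D_1$ gets $8$ of type (II), and $\partial D_2,\partial D_3$ get $8$ of type (I) in Corollary~\ref{8vertex}.
\item Every link passes your nonzero-coordinate covering test.
\item Every facet preimage carries at most $9$ vertices, and Lemmas~\ref{lem5} and~\ref{lem:nointerior} hold.
\item Lemma~\ref{missingedges} is vacuous here, since it needs a vertex with at least three nonzero coordinates, which Lemma~\ref{lem:nointerior} already excludes. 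So your third bullet gives nothing, and $g_2\ge 0$ is far from tight.
\end{itemize}
Yet only $6$ vertices lie over $\sigma$ while $\mathrm{Card}(V(\partial D_0))=10$.

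What is missing is a vanishing-coordinate constraint coming from fixed-point sets. Let $\xi_j$ be the reflection fixing $X_j$. Suppose $\xi_j$ fixes no vertex of $\partial D_x$. Then every $\xi_j$-invariant simplex has its vertices in swapped pairs, so its fixed set has dimension at most $1$. But the fixed set of $\xi_j$ on $\partial D_x\cong\Sp^3$ is a $2$-sphere. Hence some vertex of $\partial D_x$ must lie over a face avoiding $e_j$.

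The example needs a refined form of this:
\begin{itemize}
\item The two vertices over $e_0e_1$ are swapped by $\xi_1$ and cannot be adjacent. Such an edge would be pointwise fixed by $\xi_2,\xi_3,\xi_4$, whose common fixed set in $\partial D_0$ is just these two points.
\item Hence the $\xi_1$-fixed $2$-sphere in $\partial D_0$ is the induced subcomplex on the eight vertices over $e_0e_2e_4$ and $e_0e_3e_4$.
\item None of these eight vertices is fixed by $\xi_4$. So $\xi_4$ would have only finitely many fixed points on this $2$-sphere instead of a circle, a contradiction.
\end{itemize}

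With this input your case analysis closes quickly. At most one triangle and one edge of $\sigma$ carry vertices, so at most $4$ of the $6$ pairs $\{y,z\}\subset V(\sigma)$ are covered inside $\sigma$. Each remaining pair needs its own orbit over $e_xe_ye_z$. The bound $f_0\le 17$ then leaves only two configurations. One is three triangles through a common edge $e_xe_d$, which the vanishing condition in direction $d$ kills. The other is the configuration above, up to symmetry.
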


\begin{proof}
   On the contrary, suppose that $\pi^{-1}(\sigma)$ contains $10$ or more vertices in $X$ for some $3$-simplex $\sigma$ in $\D^4$. From Lemma~\ref{lem4}, we get that $V(\pi^{-1}(\sigma)) \cap V(\partial D_{V(\D^4 \setminus \sigma)}) = \emptyset$ and $\mathrm{Card}(V(\partial D_{V(\D^4 \setminus \sigma)})) \geq 8$. Therefore,
$\mathrm{Card}(V(X)) \geq \mathrm{Card}(V(\pi^{-1}(\sigma)) \cup V(\partial D_{V(\D^4 \setminus \sigma)})) \geq 18$, which contradicts the assumption that $\mathrm{Card}(V(X)) \leq 17$. Hence, $\pi^{-1}(\sigma)$ contains $9$ or fewer vertices in $X$ for every $3$-simplex $\sigma$ in $\D^4$.

From Lemma~\ref{lem3}, for $x \in V(\D^4) \setminus V(\sigma)$, $\mathrm{Card}(V(\partial D_x))$ is even and $\mathrm{Card}(V(\partial D_x)) \geq 8$. If $\mathrm{Card}(V(\partial D_x)) \geq 10$, then $\mathrm{Card}(V(\pi^{-1}(\sigma)) \cup V(\partial D_x)) \geq 18$, which again contradicts the assumption that $\mathrm{Card}(V(X)) \leq 17$. Hence, $\mathrm{Card}(V(\partial D_x)) = 8$.
\end{proof}

We adhere the notation from the proof of Lemma \ref{lem:nointerior} in the rest. One can see from the definition of $D_i$ that $\partial D_i$ does not contain fixed vertices for each $i$. Thus,
\[
\bigcup_{i \in \{0,1,2,3,4\}}V( \partial D_i) \subseteq V(X) \setminus \{e_0, e_1, e_2, e_3, e_4\}.
\]
 It follows from Lemma \ref{lem1}, $\mathrm{Card}\left(\bigcup_{i \in \{0,1,2,3,4\}}V( \partial D_i) \right )$ is even.

Further, for each vertex $x \in \partial D_i$ for $0\leq i\leq 4$, $\pi(x)$ lies in the interior of either a triangle containing the vertex $e_i$ or an edge containing the vertex $e_i$ in $\D^4$.  
From Lemma \ref{lem4}, we know that 
$$
V\big(\pi^{-1}(e_i e_j e_k e_l)\big) \cap V\big(\partial D_{V(\D^4) \setminus \{e_i, e_j, e_k, e_l\}}\big) = \emptyset$$
for some distinct $i, j, k, l \in \{0,1,2,3,4\}$.  
Thus, $\mathrm{Card}(V(X))$ is at least the sum of the vertices in $\partial D_{V(\D^4) \setminus \{e_i, e_j, e_k, e_l\}}$ and the vertices arising from the interiors of the edges and triangles of the $3$-simplex $e_i e_j e_k e_l$.

\sloppy
\begin{theorem}\label{thm7}
 There is no $\ZZ_2^4$-equivariant triangulation of $\mathbb{RP}^4$ with at most $17$ vertices.
\end{theorem}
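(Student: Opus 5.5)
Suppose, for contradiction, that $X$ is a $\ZZ_2^4$-equivariant triangulation of $\RP^4$ with $f_0(X)\leq 17$, and let $\pi\colon|X|\to\D^4$ be the orbit map. By Lemma~\ref{lem:nointerior}, every non-fixed vertex projects to the interior of an edge or a triangle of $\D^4$. By Remark~\ref{remark}, its orbit then has $2$ or $4$ vertices respectively. So $V(X)$ consists of three kinds of vertices: some fixed vertices among $e_0,\ldots,e_4$, some pairs over edges, and some quadruples over triangles.

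Let $a_{ij}\in\{0,1,\ldots\}$ count the edge-orbits over $e_ie_j$ and $b_{ijk}$ the triangle-orbits over $e_ie_je_k$. Then
\[
\mathrm{Card}(V(\partial D_i))=2\sum_j a_{ij}+4\sum_{j<k} b_{ijk},
\]
and each of these counts is at least $8$ by Lemma~\ref{lem3}. Applying Lemma~\ref{atmost9} to each $3$-face $\sigma=\D^4\setminus e_x$ gives two things: $\mathrm{Card}(V(\partial D_x))=8$ for every $x$, and the vertices over each facet number at most $9$. Summing $\mathrm{Card}(V(\partial D_i))=8$ over $i$, with each edge-orbit counted twice and each triangle-orbit counted three times, gives
\[
4\sum a_{ij}+12\sum b_{ijk}=40, \quad\text{that is,}\quad \sum a_{ij}+3\sum b_{ijk}=10.
\]
Since the non-fixed vertices number $2\sum a+4\sum b\leq 17$ (and this count is even by Lemma~\ref{lem1}), I get a small finite list of possibilities.

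Next I use the structure of each $\partial D_i$. Each is a $\ZZ_2^4$-equivariant $3$-sphere on $8$ vertices, so Theorem~\ref{2nvertexsphere} and Corollary~\ref{8vertex} say it is an octahedral sphere of type (I), (II) or (III).
\begin{itemize}
\item Type (III) means $4$ edge-orbits at $e_i$, one to each other vertex.
\item Type (II) means one triangle-orbit at $e_i$ plus two edge-orbits to the two vertices not on that triangle.
\item Type (I) means two triangle-orbits at $e_i$ whose other vertices are complementary pairs.
\end{itemize}
For instance, type (III) at $e_0$ with type (II) at $e_1$ would give $a_{01}=1$, while type (II) requires the triangle at $e_1$ to include $e_0$. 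Recording the type at each of the five vertices, together with which edges and triangles it uses, turns the problem into a consistency question on $K_5$. Each edge-orbit must be used by both of its endpoints, and each triangle-orbit by all three of its vertices, with matching types. I would enumerate these assignments. The vertex count $2\sum a+4\sum b$ plus the number of fixed vertices must be at most $17$, and the facet bound of $9$ must hold. I expect this to leave only a few configurations, for example all five of type (III), giving $20$ non-fixed vertices; that one is immediately too many. So the bound $f_0\leq17$ should already force mixtures involving triangle-orbits.

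For the surviving configurations, I would rule them out with Lemma~\ref{lem5} and Lemma~\ref{lem4}. Lemma~\ref{lem5} says that an edge $e_ie_j$ with no vertex over it must lie in a triangle that does carry vertices. Lemma~\ref{lem4} says that a simplex containing $e_i$ has no vertex in $X_i$. When these do not suffice, I would fall back on counting edges: combine $g_2(X)\geq 0$ with the missing edges forced by octahedral links (antipodal pairs in each $\partial D_i$ are non-adjacent) and by the argument of Lemma~\ref{missingedges} for triangle-orbits. I would also use the count $\sum a+3\sum b=10$, together with the requirement that $X$ carry some fixed vertex (since $\partial D_i$ has $8$ vertices, all the $D_i$ must fit inside $17$ vertices), to pin down $f_0$ exactly. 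I expect the main obstacle to be this final elimination, namely showing that mixed type (I)/(II) configurations with around $16$–$17$ vertices cannot close up into a manifold. There I would try to check directly whether the links of non-fixed vertices can be $3$-spheres, using the local octahedral structure.
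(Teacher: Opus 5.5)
Your plan has genuine gaps. The two reductions at the start are not available from the paper's lemmas, and the decisive final elimination is missing.

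First, you apply the ``moreover'' clause of Lemma~\ref{atmost9} to every facet to get $\mathrm{Card}(V(\partial D_x))=8$ for all $x$. Its proof only yields this when $\pi^{-1}(\sigma)$ already contains at least $8$ vertices. Only then do $\mathrm{Card}(V(\partial D_x))\geq 10$ and the disjointness from Lemma~\ref{lem4} force $18$ vertices, and no such lower bound is available for an arbitrary facet. That is exactly why the paper needs Cases~1--3 to exclude $\mathrm{Card}(V(\partial D_i))\geq 14$, $=12$ and $=10$ before reaching the all-$8$ situation; you skip that entire analysis.

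Second, your identity $\mathrm{Card}(V(\partial D_i))=2\sum_j a_{ij}+4\sum b_{ijk}$ presumes that every vertex over an open face containing $e_i$ lies in $\partial D_i$, i.e.\ that each orbit is seen from all corners of its face. Lemma~\ref{lem4} gives only the other inclusion: vertices of $\partial D_i$ lie over faces containing $e_i$. An orbit over the interior of $e_ie_je_k$ may lie in $\partial D_i$ but not in $\partial D_j$, and the paper's Case~4 explicitly treats these ``not the same four vertices'' alternatives. Without sharing you only have $\sum a_{ij}+3\sum b_{ijk}\geq 10$, so configurations your arithmetic discards for free need real arguments. An example: four triangle orbits, no edge orbits, every $\partial D_x$ of type~(I), and two orbits seen from only two corners. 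This gives $16$ vertices and incidence total $40$, and the paper excludes it only via the complementarity condition in Corollary~\ref{8vertex}(I). Your claim that $X$ must carry a fixed vertex is also unfounded; the paper's main $16$-vertex case has none.

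Even granting both reductions, your count lands on the same unique configuration the paper reaches:
triangle orbits over $e_ie_je_k$ and $e_ke_le_m$, edge orbits over $e_ie_l,e_ie_m,e_je_l,e_je_m$, with $\partial D_k$ of type~(I) and the other four of type~(II).
This is where the theorem is actually decided, and your listed tools do not touch it:
\begin{itemize}
\item Lemma~\ref{lem5} is satisfied, since every vertex-free edge of $\D^4$ lies in one of the two triangles carrying an orbit.
\item Lemma~\ref{lem4} is already built into the configuration.
\item $g_2(X)\geq 0$ on $16$ vertices only requires $f_1\geq 65$ out of $120$ possible edges, and nothing in your plan forces anywhere near enough non-edges.
\end{itemize}
The missing idea is the paper's. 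Pass to the triangulation of $\D^4$ induced by Theorem~\ref{thm2} on the orbit images $e_{012},e_{034},e_{13},e_{14},e_{23},e_{24}$, and use equivariance to forbid edges such as $e_{13}e_{24}$ and $e_{14}e_{23}$. Then show that the tetrahedron $e_{13}e_{14}e_{012}e_{034}$ of $X$ cannot be a face of two $4$-simplices. The $17$-vertex case reduces to this one with a single fixed vertex. Your suggestion to check whether links of non-fixed vertices are $3$-spheres points vaguely in this direction but does not supply the argument.
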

\begin{proof}
Let $X$ be a $\ZZ_2^4$-equivariant triangulation of $\mathbb{RP}^4$ with $16$ vertices.  By Lemma \ref{lem1}, number of vertices in $R =V(X)\setminus \{e_0,\ldots,e_4\}$ is even. First, assume that $V(X)$ does not contain any fixed vertex. For each $i\in \{0,\ldots,4\}$, $V(\partial D_i)\subseteq V(X)$ and  by Lemma \ref{lem3}, $\mathrm{Card}( V(\partial D_i)) \geq 8$. Further, $e_i\notin \partial D_i$ for all $i\in \{0,\ldots,4\}$. This implies that $\bigcup_{i=0}^{4}(V(\partial D_i))=R$.

$\textbf{Case 1}:$ Suppose that $\mathrm{Card}(V(\partial  D_i)) \geq 14$  for some $i \in \{0, \ldots, 4\}$.

If $\mathrm{Card}(V(\partial D_j)) = 8$ for some $j\neq i$, then by Corollary~\ref{8vertex}, $\pi(V(\partial D_j))$ lies in one of the following configurations:  
\begin{itemize}
    \item[(i)] Interiors of four edges of $\D^4$ containing the vertex $v_j$.

    \item [(ii)]  Interiors of two edges containing $v_j$ and one complementary triangle containing $v_j$ in $\Delta^4$.

    \item  [(iii)] Interior of two complementary triangles containing $v_j$ in $\Delta^4$.
\end{itemize}
   
   However, since $\mathrm{Card}(V(\partial D_j) \cap V(\partial D_i)) \ge 6$, at least $6$ vertices of $V(\partial D_j)$ must come from the interiors of some faces in $\D^4$ containing $v_iv_j$. From the above configurations, this is not possible.
Hence, we conclude that $\mathrm{Card}(V(\partial D_j)) \ge 10$.

Now, since $\mathrm{Card}(V(\partial D_j) \cap V(\partial D_i)) \ge 8$ for all $j \neq i$, the pigeonhole principle implies that  
$V(\partial D_i) \cap V(\partial D_k) \cap V(\partial D_l) \cap V(\partial D_m) \neq \emptyset$  
for some distinct $i, k, l, m \in \{0,1,2,3,4\}$. This means that there exists a point $z$ in the interior of a simplex in $\D^4$ containing the vertices $e_i, e_k, e_l, e_m$ such that $\pi^{-1}(z) \subseteq V(\partial D_i)$. This contradicts Lemma~\ref{lem:nointerior}. Hence, there is no $i \in \{0,1,2,3,4\}$ such that $\mathrm{Card}(V(\partial D_i)) \ge 14$.

$\textbf{Case 2}:$ Suppose that $\mathrm{Card}(V(\partial D_i))= 12$ for some $i\in \{0,1,\ldots,4\}$. 

The points of $\pi(V(\partial D_i))$ lie either in the interiors of triangles or in the interiors of edges, by Lemma~\ref{lem:nointerior}, . For each point $x\in \pi(V(\partial D_i))$ in the interior of the triangle, $\pi^{-1}(x)$ consists of four vertices in $\partial D_i$, and for each point $x\in \pi(V(\partial D_i))$ in the interior of an edge, $\pi^{-1}(x)$ consists of two vertices in $\partial D_i$.

If there exists a vertex that belongs to four or more of the sets $V(\partial D_j)$ for $j\in\{0,\ldots,4\}$, then there exists an interior point $x$ in a $3$-simplex $\sigma$ such that $\pi^{-1}(x)$ contains a vertex of $X$. This contradicts Lemma~\ref{lem:nointerior}. Hence, each vertex of $X$, except for the fixed vertex, lies in at most three of the sets $V(\partial D_j)$ for $j\in\{0,\ldots,4\}$.

Thus, the total multiplicity of the vertices is at most $16 \times 3 = 48$. Therefore, $\sum_{j \in \{0,1,2,3,4\}} \mathrm{Card}(V(\partial D_j)) \le 48$. This implies that $\sum_{j \ne i} \mathrm{Card}(V(\partial D_j)) \le 36$. 

Let $j, k, l, m$ denote the remaining four elements of $\{0,1,2,3,4\} \setminus \{i\}$. Since $\mathrm{Card}(V(\partial D_s)) \ge 8$ and is even for each $s \ne i$, the possible tuples $(\mathrm{Card}(V(\partial D_j)), \mathrm{Card}(V(\partial D_k)), \mathrm{Card}(V(\partial D_l)), \mathrm{Card}(V(\partial D_m)))$ are $(8,8,8,8)$, $(8,8,8,10)$, $(8,8,8,12)$, or $(8,8,10,10)$.

Observe that for the first three possibilities, there exists some $s \in \{j, k, l\}$ such that either $V(\partial D_s)$ intersects $V(\partial D_i)$ in more than four vertices, or there exists a common vertex in $V(\partial D_s)$ for $s \in \{j, k, l, m\}$, contradicting Corollary~\ref{8vertex} or Lemma~\ref{lem:nointerior}, respectively. Hence, we are left with the only possible case $(8,8,10,10)$.

It follows that $V(\partial D_j)$ and $V(\partial D_k)$ intersect $V(\partial D_i)$ in exactly four vertices each, and $V(\partial D_j)$ and $V(\partial D_k)$ intersect each other in exactly four vertices from $V(X) \setminus V(\partial D_i)$. This implies that the interior of the triangle $e_j e_k e_s$ in $\D^4$ contributes four vertices to both $V(\partial D_j)$ and $V(\partial D_k)$ for some $s \in \{l, m\}$. 

By Corollary~\ref{8vertex}, for some $t \in \{l, m\} \setminus \{s\}$, the interiors of triangles $e_j e_i e_t$ and $e_k e_i e_t$ contribute four vertices to $V(\partial D_j)$ and $V(\partial D_k)$, respectively. Since $V(\partial D_t)$ contains $10$ vertices, there is a $p\neq t$ such that the interior of $e_pe_t$ contribute at least two vertices to the $V(\partial D_t)$. If $p=i,p=j$, or $p=k$, then the $3$-simplex $e_je_ke_ie_t$ contribute at least $10$ vertices to the $V(X)$. This contradicts Lemma \ref{atmost9}. Thus, the interior of $e_te_s$ must contribute at least two vertices to the $V(\partial D_t)$. Since from Lemma \ref{lem4}, $V(\pi^{-1}(e_je_ke_se_t))\cap V(\partial D_i)=\emptyset $, and $\bigcup_{r=0}^{4}\partial D_r$ contains at most $16$ vertices, $\mathrm{Card}(V(\partial D_i))\leq 10$. This is a contradiction.

$\textbf{Case 3}:$ Suppose that $\mathrm{Card}(V(\partial D_i)) = 10$ for some $i\in \{0,1,\ldots,4\}$. 

If $\mathrm{Card}(V(\partial D_j)) = 10$ for all $j \in \{0,1,2,3,4\}$, then some vertex must belong to at least four of the sets $V(\partial D_j)$, contradicting Lemma~\ref{lem:nointerior}. Hence, there exists $s \ne i$ such that $\mathrm{Card}(V(\partial D_s)) = 8$. Since $\mathrm{Card}(V(\partial D_j)) = 10$, the vertex distribution for $\partial D_j$ is one of the following:
\begin{itemize}
    \item [(i)] Eight vertices comes from the interiors of triangles containing $e_j$ in $\D^4$ and two vertices comes from the interior of an edge containing $e_j$ in $\D^4$.

    \item [(ii)] Four vertices comes from the interior of a triangle containing $e_j$ and the remaining $6$ vertices comes from the interiors of  edges containing $e_j$ in $\D^4$.
\end{itemize}

\textbf{Subcase (i):} Suppose that the  eight vertices of $V(\partial D_i)$ comes from the interiors of triangles in $\D^4$.

Suppose that both the triangles belong to a tetrahedron $\sigma$ with $V(\sigma) = \{e_i, e_j, e_k, e_l\}$.  
If both triangles contributing to $V(\partial D_i)$ are the same, say $e_i e_j e_k$, then the  tetrahedron $e_i e_j e_k e_m$ for some $m \notin \{i,j,k,l\}$ would contribute more than nine vertices to $V(X)$, contradicting Lemma~\ref{atmost9}.  
Hence, two distinct triangles of $\sigma$, say $e_i e_j e_k$ and $e_i e_j e_l$, each contribute four vertices to $V(\partial D_i)$.

The remaining two vertices of $V(\partial D_i)$ must then come from the interior of an edge $e_i e_m$, where $m \notin  \{i,j,k,l\}$.  Since $V(\pi^{-1}(e_ie_je_ke_l))\cap V(\partial D_m)=\emptyset$, by Lemma~\ref{lem3}, $\mathrm{Card}(V(\partial D_m)) = 8$, and the interior of the edge $e_i e_m$ also contributes two vertices to $V(\partial D_m)$.  Thus, $\partial D_m$ must be either of type Corollary \ref{8vertex} $(II)$ or $(III)$. 
If $\partial D_m$ is of type  Corollary~\ref{8vertex} $(III)$, then the interiors of $e_m e_j$, $e_m e_k$, and $e_m e_l$ each contribute two vertices to $V(\partial D_m)$, so the tetrahedron $e_i e_j e_k e_m$ would contribute at least ten vertices to the $V(X)$, a contradiction. Now, suppose that $\partial D_m$ is of type   Corollary \ref{8vertex} $(II)$. Then the following situation arises.
\begin{itemize}
\item Let interior of the edge $e_je_m$ and interior of triangle $e_ke_le_m$ contribute two and four vertices to the $\partial D_m$, respectively. Since $R$ contains at most $16$ vertices and the interiors of faces $e_ie_je_k,e_ie_je_l,e_ke_le_m,e_ie_m,e_je_m$ contribute the $16$ vertices to $X$, $\partial D_k$ as well as $\partial D_l$ contain $8$ vertices and are of type Corollary \ref{8vertex} $(I)$. On the other hand, if $\partial D_j$ contains $8$ vertices then its vertices arise from the interior of faces $e_ie_je_k$ and $e_ie_je_l$  which are not complementary faces containing $e_j$. Thus, $\partial D_j$ contains  $10$ vertices. Now, since the vertices of $\partial D_k$ arise form the interior of $e_ie_je_k$ and $e_ke_le_m$, without loss of generality assume that the vertex set of $\partial D_k$ is $\{(\pm a,\pm b,0,0),(0,0,\pm c,\pm d)$, for some non-zero real numbers $a,b,c,d$. Now, since the same interior point of $e_ke_le_m$ contributes vertices to the $\partial D_l$, $\{(0,0,\pm c,\pm d)\}\subset  V(\partial D_l)$. Thus, the other four vertices of $\partial D_l$ must be of type $\{(\pm e, \pm f,0,0)\}$ for some non-zero $e,f$ and corresponding to the face $e_ie_je_l$. Since the vertices of $\partial D_i$ arise from the interios of faces $e_ie_je_k,e_ie_je_l$, and $e_ie_m$, $\{(\pm a,\pm b,0,0),(\pm e, \pm f,0,0) \}\subset V(\partial D_i)$. The other two vertices of $\partial D_i$ comes from the interior of the edge $e_ie_m$, thus these two vertices must be antipodal and contains exactly one non-zero cordinate. Therefore for any choices of the two vertices arising from $e_ie_m$, all vertices of $V(\partial D _i)$ contains a zero coordinate, which is not possible. 

\item If interior of the edge $e_ke_m$ (resp., $e_le_m$) and interior of triangle $e_je_le_m$ (resp., $e_je_ke_m$) contribute two and four vertices to the $\partial D_m$, respectively, then the $3$-simplex $e_ie_je_le_m$ (resp., $e_ie_je_ke_m)$ contribute at least $10$ vertices to the $V(X)$. This situation is not possible.

\end{itemize}

Now, consider that the two triangles whose interiors contribute $8$ vertices to the $\partial D_i$ are from two different tetrahedrons. Then both triangles must be complementary triangles containing $e_i$. Let the interiors of $e_ie_je_k$ and $e_ie_le_m$ contribute $8$ vertices to the $\partial D_i$. Without loss of generality, assume that the other two vertices of $\partial D_i$ comes from the interior of the edge $e_ie_j$. 
Since the $2$-simplex $e_ie_je_k$ contribute at least $6$ vertices to the $V(X)$, the  interior of $e_ie_le_m$ must contribute four vertices to the $\partial D_l$ and $\partial D_m$ both. If $\partial D_m$ is of type Corollary \ref{8vertex} $(II)$, then the interiors of $e_je_m$, $e_ke_m$ contribute the other four vertices to the $\partial D_m$. Then the $3$-simplex $e_ie_je_ke_m$ contributes at least $10$ vertices to the $V(X)$, which is a contradiction. Now, assume that $\partial D_m$ is of type Corollary \ref{8vertex} $(I)$. Then the other four vertices arise from the interior of $e_je_ke_m$. Then again the $3$-simplex $e_ie_je_ke_m$ contributes at least $10$ vertices to the $X$. This is a contradiction. Hence, this subcase is not possible.

\textbf{Subcase (ii):} Suppose that only one triangle, say $e_i e_j e_k$, contributes four vertices to $V(\partial D_i)$.  
Let $e_l, e_m$ be the remaining vertices of $\mathbb{D}^4$.  
If the remaining six vertices of $V(\partial D_i)$ come from edges or triangles within $e_i e_j e_k e_l$ or $e_i e_j e_k e_m$, then by Lemma~\ref{atmost9}, $V(X)$ would have more than nine vertices, a contradiction.  
Thus, at most two of these vertices can come from edges of $e_ie_je_k$ containing $e_i$.

Assume $e_i e_j$ contributes two such vertices.  
Then by Lemma~\ref{lem3}, edges $e_ie_l$ and $e_ie_m$ also contribute two vertices each to $V(\partial D_i)$. Now, since the $3$-simplex $e_ie_je_ke_l$ contributes at least $8$ vertices to the $X$, $\mathrm{Card}(V(\partial D_m)) = 8$ and the interior of $e_ie_m$ must contribute $2$ vertices to the $\partial D_m$.   
If $\partial D_m$ is of type Corollary \ref{8vertex} $(III)$ , then the interiors of $e_m e_j$, $e_m e_k$, and $e_m e_l$ each contribute two vertices to $V(\partial D_m)$, forcing $e_i e_j e_k e_m$ to contribute at least  ten vertices to $X$, again a contradiction.  
Suppose that $\partial D_m$ is of type Corollary \ref{8vertex} $(II)$. Then for any vertex distribution of $\partial D_m$ in $\D^4$, $e_ie_je_ke_m$ contribute at least $10$ vertices to the $X$, a contradiction.

Finally, if no edge of $e_i e_j e_k$ containing $e_i$ contributes vertices to $V(\partial D_i)$, then the remaining six vertices must come from the interiors of $e_i e_l$ and $e_i e_m$, which 
If the interior of $e_ie_l$ or $e_ie_m$ contribute $6$ verices then either $e_ie_je_ke_l$ or $e_ie_je_ke_m$ contribute at least $10$ vertices to $X$, a contradiction. Thus, we assume that four vertices comes from the interior of $e_ie_l$ and two vertices comes from the $e_ie_m$. Then since the $3$-simplex $e_ie_je_ke_l$ contribute at least  $8$ vertices to the $X$, $\partial D_m$ is of type Corollary \ref{8vertex}$(II)$or $(III)$ . If it is of type Corollary \ref{8vertex} $(III)$ , then $e_ie_je_ke_m$ contribute $10$ vertices to the $X$, a contradiction. Suppose that $\partial D_m$ is of type Corollary \ref{8vertex} $(II)$. Then for any distribution of vertices of $\partial D_m$, either $e_i{e_j}e_{k}e_{m}$, $e_{i}e_{j}e_{l}e_{m}$ or $e_{i}e_{k}e_{l}e_{m}$ contribute at least $10$ vertices to the $X$, a contradiction.

Hence, all possible configurations lead to contradictions.

$\textbf{Case 4}:$ Suppose that $\mathrm{Card}( V(\partial D_i)) = 8$  for $i=0, \ldots, 4$. 

Fix an $i \in \{0, \ldots, 4\}$. Then, there are $3$ possibilities for the triangulation of $\partial D_i$ as described in Corollary \ref{8vertex}. 
If the vertex set of $\partial D_i$ is of type Corollary \ref{8vertex} $(III)$ for some $i\in \{0,1,2,3,4\}$, then $\pi(V(\partial D_i))$ belongs to the interiors of edges of $\Delta^4$ containing the vertex $e_i$, and thus interior of each edge contribute two vertices to the $V(\partial D_i)$. Now, if the vertex set of $\partial D_x$ is of type Corollary \ref{8vertex} $III$ for all $x\in \{0,1,2,3,4\}\setminus \{i\}$
 then interior set  of each edge of $\D^4$ contributes two vertices to the $V(X)$ and then $X$ has at least $20$ vertices. This is a contradiction.
Let for some $j\neq i$, $e_je_le_m$  contributes four vertices to the $\partial D_j$. If $i\in \{l,m\}$ then $e_ie_je_pe_q$ contribute at least $10$ vertices to the $V(X)$ for some $p\in \{l,m\}\setminus \{i\}$ and $q\in \{0,1,2,3,4\}\setminus \{i,j,p\}$. This is a contradiction. If $i\notin\{l,m\}$, then the simplex $e_ie_je_le_m$ contribute at least $10$ vertices to the $V(X)$. This is a contradiction. This implies that for any $x\in \{0,1,2,3,4\}\setminus \{i\}$, $\partial D_x$ cannot be of type Corollary \ref{8vertex} $(I)$ or $(II)$.

Suppose that the vertex set of $\partial D_x$ for all $x\in\{0,\ldots,4\}$ is of type Corollary \ref{8vertex} $(I)$. Let for some $i\in\{0,1,2,3,4\}$, interiors of the  $2$-simplices, say $e_ie_je_k$ and $e_ie_le_m$ in $\D^4$   contributes $8$ vertices to the $V(\partial D_i)$ where $j,k,l,m$ are distinct numbers from $\{0,1,2,3,4\}\setminus \{i\}$. Now, we have the following situations.

\begin{itemize}
    \item The interior of $e_ie_je_k$  does not contribute the same four vertices of $\partial D_i$ to $\partial D_j$. Thus, interior of $e_je_le_m$   also does not contribute any vertex to $V(\partial D_j)$. Let the interiors of $e_je_ke_l$ and $e_je_ie_m$ contribute the $8$ vertices to $\partial D_j$. Then the $16$ vertices of $V(X)$ comes from the interior of faces $e_ie_je_k$, $e_ie_le_m$, $e_je_ke_l$ and $e_ie_je_m$. Then from Corollary \ref{8vertex}, we find that the interiors of $e_ie_le_m$,  and $e_ie_je_m$ cannot contribute the $8$ vertices to the $\partial D_m$ as they are not complementary triangles containing $e_m$. Thus, this case is not possible. Similarly, this case is not possible if the interiors of $e_je_ie_l$ and $e_je_ke_m$ contribute the $8$ vertices to $\partial D_j$.
    
   \item The interior of $e_ie_je_k$ contribute the same four vertices of $\partial D_j$ to $\partial D_j$ and then the interior of $e_je_le_m$ also contribute the four vertices to the $V(\partial D_j)$. Thus, for $V(\partial D_l)$, the contributing triangles must be $e_je_le_m$ and $e_le_ie_k$. Then observe that the $16$ vertices of $V(X)$ comes from the interiors of $e_ie_je_k$, $e_ie_le_m$, $e_je_le_m$, and $e_le_ie_k$. Then we can see that the interiors of $e_ie_le_m$,  and $e_je_le_m$ cannot contribute the $8$ vertices to the $\partial D_m$ as they are not complementary triangles containing $e_m$.    
\end{itemize}

Suppose that the vertex set of $\partial D_i$ for some $i\in\{0,\ldots,4\}$ is of type Corollary \ref{8vertex} $(II)$. Then there is a $2$-simplex, say $e_ie_je_k$ in $\D^4$  whose interiors contributes $4$ vertices to the $V(\partial D_i)$ and two edges, say $e_ie_l$ and $e_ie_m$ whose interiors contributes two vertices each to the  $V(\partial D_i)$ where $l\neq m$ and $l,m\notin\{j,k\}$. Since the vertex set of $\partial D_j$ cannot be of type of Corollary \ref{8vertex} $(III)$, it is of type Corollary \ref{8vertex} $(I)$ or $(II)$. 

\textbf{Subcase (i):} Suppose that the vertex set of $\partial D_j$ is of type Corollary \ref{8vertex} $(II)$, then, we have the following possibilities:
\begin{itemize}
\item The interior of $e_ie_je_k$  does not contribute the same four vertices of $\partial D_i$ to $\partial D_j$. If the interiors of $e_je_ie_l$(resp. $e_je_ke_l)$, $e_je_k$ (resp., $e_je_i$), and $e_je_m$ contributes the $8$ vertices to the $\partial D_j$, then the $3$-simplex $e_ie_je_ke_l$ contributes at least $10$ vertices to the $V(X)$. This is a contradiction. 

If the interiors of $e_je_ie_m$ (resp. $e_je_ke_m)$, $e_je_i$ (resp. $e_je_k$), and $e_je_l$ contributes the $8$ vertices to the $\partial D_j$, then the $3$-simplex $e_ie_je_ke_m$ contributes at least $10$ vertices to the $V(X)$. This is a contradiction.

If the interiors of $e_je_le_m$ , $e_je_i$  and $e_je_k$ contributes the $8$ vertices to the $\partial D_j$, then the $3$-simplex $e_ie_je_ke_l$ contribute at least $10$ vertices to the $V(X)$. This is a contradiction.

    \item The interior of $e_ie_je_k$ contribute the same four vertices of $\partial D_i$ to $\partial D_j$ and then the interior of the edges of $e_je_l$ and $e_je_m$ contribute two vertices each to $\partial D_j$. In this case, the $3$-simplex $e_ie_je_me_l$ contributes at least $8$ vertices to the $V(X)$. Since $V(\pi^{_1}(e_ie_je_le_m))\cap V(\partial D_k)=\emptyset$, the interior of $e_ie_je_k$ must contribute $4$ vertices to the $V(\partial D_k)$. Now, the other four vertices of $V(\partial D_k)$ either comes from the interior of triangle $e_ke_le_m$ or the interior of edges $e_ke_l$ and $e_ke_m$. In the later case, the $3$-simplex $e_ie_je_ke_l$ contributes at least $10$ vertices to the $V(X)$. This is a contradiction. Thus, the interiors of  $e_ke_le_m$ and $e_ie_je_k$ contributes the $8$ vertices to the $V(\partial D_k)$. 
    Now, the $16$ vertices of $X$ comes from the interiors of faces $e_ie_je_k, e_ie_l,e_ie_m, e_je_l,e_je_m, e_ke_le_m$. Therefore, the interiors of faces  $e_ie_m, e_je_m, e_ke_le_m$ must contribute the $8$ vertices to the $\partial D_m$. Further, the interiors of faces  $e_ie_l, e_je_l, e_ke_le_m$  contribute the $8$ vertices to the $\partial D_l$. 
\end{itemize}

\textbf{Subcase (ii):} Suppose that the vertex set of $\partial D_j$ is of type Corollary \ref{8vertex} $(I)$. Then, we have the following possibilities:
\begin{itemize}
\item  If the interior of $e_ie_je_k$  does not contribute the same four vertices of $\partial D_i$ to $\partial D_j$, then the interior of $e_je_le_m$ also does not contribute the four vertices to the $\partial D_j$. If the interiors of $e_je_ie_l$(resp. $e_je_ke_l)$, and $e_je_ke_m$ (resp., $e_je_ie_m)$ contributes the $8$ vertices to the $\partial D_j$, then the $3$-simplex $e_ie_je_ke_l$ (resp., $e_ie_je_ke_m$) contribute at least $10$ vertices to the $V(X)$. This is a contradiction. 

    \item If the interior of $e_ie_je_k$ contribute the same four vertices of $\partial D_i$ to $\partial D_j$, then the interior of $e_je_le_m$ contribute the four vertices to the $\partial D_j$. If  the vertex set of $\partial D_k$ is of type Corollary \ref{8vertex} $(II)$ then we can follow the same steps of Subcase (i) above. Therefore, we consider that  the vertex set of $\partial D_k$ is of type $(I)$ in Corollary \ref{8vertex}. Since the $3$-simplex $e_ie_je_le_m$ contribute at least $8$ vertices to the $V(X)$, $e_ie_je_k$ must contribute the same four vertices of $\partial D_i$ to $\partial D_k$. Therefore, the interior of $e_ke_le_m$ contribute the other four vertices to the $V(\partial D_k)$. Thus, the $16$ vertices of $V(X)$ comes from the interiors of faces $e_ie_je_k,e_je_le_m,e_ke_le_m,e_ie_l,$ and $e_ie_m$.   
But then the possible faces whose interiors may contribute $8$ vertices to the $\partial D_l$ are $e_je_le_m, e_ke_le_m, e_ie_l $. Observe that $V(\partial D_l)$ cannot be of any type given in Corollary \ref{8vertex}.
\end{itemize}

Hence, form the all above discussions in cases $(1)$ to $(4)$, we get only one possibility where the $16$ vertices of $X$ comes from the interiors of simplices $e_ie_je_k$, $e_ie_l$, $e_ie_m$, $e_ke_le_m$, $e_je_l$, and  $e_je_m$. 

Without loss of generality, we replace $i $ with $1$, $k$ with $0$, $j$ with $2$, $l$ with $3$ and $m$ with $4$. 
Then $\partial D_0$ has an equivariant triangulation as in  Corollary \ref{8vertex} $(I)$, and there are $8$-vertices, say, $\{u_1, \ldots, u_8\}$ such that $\pi(\{u_1, u_2, u_3, u_4\})$ is a point in the interior of triangle, say $e_0e_1e_2$, of $\Delta^4$, and $\pi(\{u_5, u_6, u_7, u_8\})$ is a point in the interior of a triangle, say $e_0e_3e_4$, of $\Delta^4$. Then, the 8-vertex equivariant triangulation of $\partial D_1$, $\partial D_2$, $\partial D_3$ and  $\partial D_4$ are as in Corollary \ref{8vertex} $(II)$. Under this situation, we will examine the induced triangulation on $\Delta^4$ (see Theorem \ref{thm2}). Figure \ref{fig_trin_D4} gives some faces of this triangulation on $\Delta^4$.
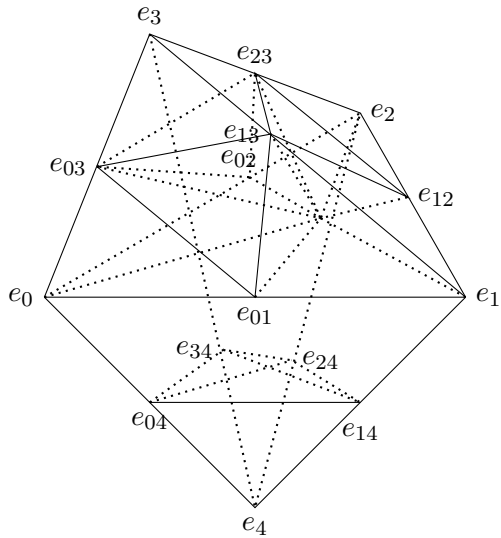
\begin{figure}
\begin{tikzpicture}[scale=0.7]
    \draw[] (0,0)--(8,0);
   \draw[] (0,0)--(4,-4)--(8,0);  
 \draw[] (0,0)--(2,5)--(8,0);
 \draw[] (2,5)--(6,3.5)--(8,0);
 \draw[thick, dotted] (2, 5)--(4, -4)--(6, 3.5)--(0,0);
  \draw[] (1,2.48)--(4,0)--(4.3,3.1)--(1,2.48);
    \draw[thick, dotted] (1,2.48)--(5.25, 1.5)--(4, 4.26);
   \draw[thick, dotted] (1,2.48)--(4, 4.26)--(3.9, 2.28)--(1,2.48);
    \draw[thick, dotted] (3.9, 2.28)--(5.25, 1.5);
     \draw[thick, dotted] (0, 0)--(5.25, 1.5)--(4.3,3.1);
   \draw[thick, dotted] (8, 0)--(5.25, 1.5)--(6.9, 1.9);
   \draw[thick, dotted] (4, 0)--(5.25, 1.5)--(6, 3.5);
   
    \node[left] at (0,0) {$e_0$};
   \node[below] at (4, -4) {$e_4$};
       \node[right] at (8,0) {$e_1$};
   \node[right] at (6, 3.5) {$e_2$};
    \node[above] at (2,5) {$e_3$};
  \node[right] at (6.9, 1.9) {$e_{12}$};
     \node[above] at (4, 4.26) {$e_{23}$};
       \node[below] at (6,-2.2) {$e_{14}$};
   \node[left] at (3.4,-1) {$e_{34}$};
     \node[left] at (4.3,3.1) {$e_{13}$}; 
 \node[right] at (4.7,-1.2) {$e_{24}$};
     \node[left] at (1,2.48) {$e_{03}$};
    \node[below] at (4,0) {$e_{01}$};
  \node[above] at (3.7, 2.28) {$e_{02}$};
   \node[below] at (2,-2) {$e_{04}$};
   \draw[] (2,-2)--(6,-2); 
    \draw[thick, dotted] (2,-2)--(3.4,-1)--(4.7,-1.2)--(6,-2); 
     \draw[] (4.3, 3.1)--(4.3,3.1)--(6.9, 1.9)--(4, 4.26)--(4.3, 3.1); 
        \draw[thick, dotted] (2,-2)--(4.7,-1.2); 
   \draw[thick, dotted] (3.4,-1)--(6,-2); 
 \end{tikzpicture}
\caption{A part of a triangulation on $\Delta^4$.}
\label{fig_trin_D4}
 \end{figure}
 Let $e_{ij}$ (resp., $e_{ijk}$) denote the barycenter of the simplex  $e_ie_j$ (resp., $e_ie_je_k$) in $\D^4$ for $i,j,k\in \{0,\ldots,4\}$. 
The induced triangulation on $F_0=\mathrm{Ind}_{\D^4}(\{e_1,e_2,e_3,e_4\}) $ contains the faces as in Figure \ref{fig_Tring_F0}. 
 \begin{figure}
\begin{tikzpicture}[scale=0.7]
 \draw[] (2,5)--(6,3.5)--(8,0)--(2,5)--(4,-4)--(8,0);
 \draw[thick, dotted] (2, 5)--(4, -4)--(6, 3.5);
 \draw[] (4.3, 3.1)--(6,-2)--(3.3,-1)--(4.3,3.1)--(6.9, 1.9)--(4, 4.26)--(4.3, 3.1);
 \draw[thick, dotted] (4.8, -1)--(6.9, 1.9)--(6,-2)--(4.8, -1)--(3.3,-1)--(4, 4.26)--(4.8, -1);

 \node[right] at (6.9, 1.9) {$e_{12}$};
   \node[above] at (6, 3.5) {$e_2$};
    \node[above] at (4, 4.26) {$e_{23}$};
  \node[below] at (4,-4) {$e_4$};
    \node[below] at (8,0) {$e_1$};
      \node[below] at (6.2,-2) {$e_{14}$};
   \node[left] at (2,5) {$e_{3}$};
   \node[left] at (3.25,-1) {$e_{34}$};
     \node[left] at (4.3,3.1) {$e_{13}$}; 
 \node[below] at (5.6, 1.7) {$e_{24}$};
    
\end{tikzpicture}
\caption{A triangulation on $F_0$.}
\label{fig_Tring_F0}
 \end{figure}
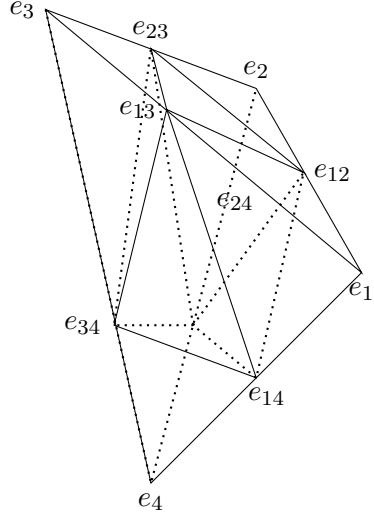
Suppose that there is an edge between $e_{12}$ and $e_{34}$, say $e_{12}e_{34}$. Then $(-1, -1, 0,0)(e_{12}e_{34})=(-e_{12})e_{34}$ is also an edge in $X$. This is a contradiction, since $e_{12}=-e_{12}$ in $\mathbb{RP}^4$. By similar arguments, there cannot be an edge between $e_{14}$ and $e_{23}$. 

Now, to obtain the induced triangulation of $\Delta^4$, we need to have edges either from $e_{012}$ to each of $e_{03}, e_{23}, e_{24}$, or from $e_{134}$ to each of vertices $e_{02}, e_{13}, e_{23}$. However, similarly as above, there cannot be an edge between the vertices  $e_{23}$, $e_{012}$, $e_{34}$, $e_{034}$, and $e_{23}$. Observe that the tetrahedron $e_{13}e_{14}e_{012}e_{034}$ is a simplex in $X$. So, it is a face of two $4$-simplices. The other vertex of these two simplices should come from $\{e_{24}, e_{34}, e_{12}, e_{23}\}$. Now, $e_{13}e_{14}e_{012}e_{034}e_{24}$ cannot be a $4$-simplex, since $e_{13}e_{24}$ is not an edge in $X$. Similarly, $e_{13}e_{14}e_{012}e_{034}e_{34}$, $e_{13}e_{14}e_{012}e_{034}e_{12}$, and $e_{13}e_{14}e_{012}e_{034}e_{23}$ cannot be a $4$-simplex  $\Delta^4$. Therefore, there is no $16$-vertex $\ZZ_2^4$-equivariant triangulation of $\mathbb{R}P^4$.

If $Y$ is a $17$-vertex equivariant triangulation of $\mathbb{R}P^4$, then one vertex of $Y$ has to be a fixed vertex (see Lemma \ref{lem1}), say $e_0$. The situation of the vertices in $V(Y)\setminus\{e_0\}$ will be as in the case for $X$ above. Thus, by similar arguments,  $Y$ cannot exist.  
\end{proof}

Theorem \ref{thm7} raises the following question.
\begin{question}
Is there a $\ZZ_2^4$-equivariant triangulation of $\RP^4$ with $18$ vertices?  
\end{question}

Proposition \ref{equivariant} gives some equivariant triangulation of $\mathbb{R}P^n$. However, the number of verteces is very large. Therefore, it is natural to ask the following.
\begin{question}
Fix $n\geq 4$. What is the minimal number of vertices required for a $\ZZ_2^n$-equivariant triangulation of $\RP^n$?
\end{question}

{\bf Acknowledgment.} The first author would like to thank Subhajit Ghosh for his support through the Inspire Faculty Fellow Research Grant (IFA-23-MA 198) and IIT Madras for the  institute postdoctoral fellowship.
The authors thank Basudeb Datta, Mainak Poddar, and Sonia Balagopalan
for their helpful comments. 
The second author thanks Korea Advanced Institute of Science and  Technology, Pacific Institute
for the Mathematical Sciences and the University of Regina for financial support.

\end{document}